\documentclass[12pt, reqno]{amsart}
\usepackage[top=3cm, left=2.5cm, right=2.5cm, bottom=2cm]{geometry}
\usepackage{amsthm,amsmath,amsfonts,amssymb, mathtools, dsfont}
\usepackage{graphicx}
\usepackage[mathcal]{euscript}
\usepackage{xcolor}
\usepackage{amsrefs}
\usepackage[colorlinks,citecolor=blue,urlcolor=blue, linkcolor=blue]{hyperref}

\theoremstyle{plain}
\newtheorem{theorem}{Theorem}[section]

\newtheorem{lemma}[theorem]{Lemma}
\newtheorem{proposition}[theorem]{Proposition}

\newtheorem{corollary}[theorem]{Corollary}

\theoremstyle{remark}
\newtheorem{definition}[theorem]{Definition}
\newtheorem{example}[theorem]{Example}
\newtheorem{remark}[theorem]{Remark}
\newtheorem{assumption}[]{Assumption}

\newcommand{\infdd}{\Rightarrow_{\text{fdd}}}
\newcommand{\mcY}{\mc{Y}}
\newcommand{\mcX}{\mc{X}}
\newcommand{\mchX}{\wh{\mc{X}}}
\newcommand{\mcH}{\mc{H}}
\newcommand{\mcP}{\mc{P}}

\newcommand{\mcN}{\mc{N}}

\newcommand{\td}{\mathbb{T}^d}
\newcommand{\zd}{\mathbb{Z}^d}
\newcommand{\cQ}{\mathcal{Q}}
\newcommand{\wh}{\widehat}

\newcommand{\Rd}{\mathbb{R}^d}
\newcommand{\stcomp}[1]{{#1}^{\mathsf{c}}}
\DeclareMathOperator{\fin}{Fin}

\DeclareMathOperator{\diam}{diam}

\DeclareMathOperator{\starop}{\star}
\newcommand{\bc}{\mathbf{c}}
\newcommand{\cI}{\mathcal I}
\newcommand{\seceq}{\setcounter{equation}{0}} 

\newcommand{\rev}[1]{#1}
\newcommand{\efe}[1]{#1}


\newcommand{\pr}{\mathbb P}
\newcommand{\ex}{\operatorname{\mathbb E}}
\DeclareMathOperator{\var}{var}
\DeclareMathOperator{\cov}{cov}
\newcommand{\der}{d}
\newcommand{\mc}{\mathcal}

\newcommand{\ind}{\operatorname{\mathds{1}}}

\newcommand{\bs}{\boldsymbol}
\newcommand{\nn}{\nonumber}

\begin{document}

\title{Central Limit Theorems for Local Functionals of Dynamic Point Processes}

\author{Efe Onaran}
\address{Department of Mathematics and School of Engineering and Applied Science, University of Pennsylvania}
\email{eonaran@seas.upenn.edu}
\thanks{
Significant parts of this work were carried out when both EO and OB were affiliated with the Technion -- Israel Institute of Technology.\\
EO was supported in part by the Israel Science Foundation, Grants 2539/17 and 1965/19.\\
OB was supported in part by the Israel Science Foundation Grant 1965/19 and in part by the EPSRC grant EP/Y008642/1.\\
 RJA was supported in part by the Israel Science Foundation, Grant  2539/17.}

\author{Omer Bobrowski}
\address{School of Mathematical Sciences, Queen Mary University of London}
\email{o.bobrowski@qmul.ac.uk}

\author{Robert J. Adler}
\address{Viterbi Faculty of Electrical and Computer Engineering, Technion--Israel Institute of Technology}
\email{radler@technion.ac.il}

\begin{abstract}
We establish \rev{finite-dimensional central} limit theorems for local, additive, interaction functions of temporally evolving point processes. The dynamics are those of a spatial Poisson process on the flat torus with points subject to a birth-death mechanism, and which move according to Brownian motion while alive. The results reveal the existence of a phase diagram describing at least three distinct structures for the limiting processes, depending on the extent of the local interactions and the speed of the Brownian motions. The proofs, which identify three different limits, rely heavily on Malliavin-Stein \rev{type CLTs for $U$-statistics} on a representation of the dynamic point process via a distributionally equivalent marked point process. 
\end{abstract}

\subjclass[2020]{60G55, 60F17, 60D05, 60G15}
\keywords{Birth-motion-death process, Dynamic Boolean model, Central limit theorems, Ornstein-Uhlenbeck process, Random geometric graphs, Malliavin-Stein approximation}

\maketitle

\section{Introduction}
\seceq

Our interest lies in \rev{finite-dimensional} limit  theorems for {local} functionals defined on {dynamic} point processes, where the dynamics involve both {birth-death} and Brownian components.
 
More specifically, at time $t=0$ we are given a homogeneous Poisson point process of intensity $n$ on the $d$-dimensional flat torus $\td$. Each point has an independent, exponentially distributed lifetime, after which it is removed from $\td$. In addition, new points are added uniformly  on $\td$ according to a Poisson process 
with rate $n$. They too have independent, exponential lifetimes and are removed at death. This is the birth-death structure.

In addition, each point, while alive, moves  on $\td$ according  to  an independent Brownian motion with variance $\sigma^2$. 
We  denote the finite set of locations of all the  points alive at time $t\geq 0$ by  $\eta_n(t)\subset\td$.

We denote `interaction functionals' by $\xi_r$, where $\xi_r$ is a real valued function on finite subsets of $\td$ and the `locality parameter' $r>0$ plays the role of a `maximum interaction distance'. Somewhat more precisely, $\xi_r(A)\equiv 0$ if the diameter of $A$ is greater than $r$. A simple example of such a functional would count the number of cliques of fixed size, and of diameter no larger than  $r$, in a geometric graph. Our interest then would be in the time evolution of the number of cliques  in such a  graph when the nodes are the points in  $\eta_n(t)$. In particular, we are interested in the limit behavior of \rev{$U$-statistics} of a Poisson process with rate $n$ as $n\to \infty$.

It turns out that these limits depend on a delicate balance between the mean number of points, $n$, and the locality and motion rates. Given an $n$, let $r_n$ be the locality parameter, and $\sigma_n$ the speed of the Brownian motions. We will want both $r_n\to 0$ and $\sigma_n\to 0$ as $n\to \infty$ in order to obtain non-trivial limits. To save on notation, we will suppress the subscript on both $r_n$ and $\sigma_n$.

The delicate balance just mentioned leads to a phase diagram for the pair $(r,\sigma)$ for which we do not yet have a full description, but we can show that it contains at least three distinct regimes. 

 If $\sigma\ll r$, which we call the `slow regime', the  speed of the Brownian motions with respect to the maximum interaction distance is negligible, and we obtain the same limiting process for an appropriately normalized version of 
\begin{equation*}
     f_{n}(t) \coloneqq \sum_{\mcY \subseteq \eta_n(t)} \xi_r (\mcY)
\end{equation*} 
 as in \cite{onaran2022functional}, which studied the same model as here but without the Brownian motions. This limit is Gaussian, and is  representable as a weighted sum of independent Ornstein-Uhlenbeck processes with different parameters. 
 
 If $\sigma/ r\to c\in (0,\infty) $, which we call the  `moderate regime'  we prove that the corresponding limit is a special type of Gaussian process, 
 again representable via a sum, although in this case there is no simple analytic form for the covariances of the summed processes. \rev{In both the slow and moderate regimes, we find that the relative \emph{weight} of the summands depends on the asymptotic behavior of the density $nr^d$.}  
 
 Finally, in the `fast regime', in which $\sigma\gg r $, it turns out that 
 $f_n$, normalized, converges  to white noise, in the sense that its integral over time converges to Brownian motion. 
 \efe{Note that the decay rate of the covariance of the limit distribution is different in each regime. We conjecture that there exist ``transitional'' phases that fall between these three main regimes. A more complete description of the regime landscape is left for future work.}

The main tool we use in the proofs is the marked point process representation for the dynamic models introduced in \cite{onaran2022functional}. In addition, for finite dimensional convergence, we use results from the Stein normal approximation theory through Malliavin calculus that has been developing over the last decade \cite{nourdin2009,nourdinbook}. The normal approximation techniques we use differ from the ones we used in \cite{onaran2022functional} since the functional is no longer local in the strict sense (defined there) due to the movements of the particles. As we show, however, Malliavin-Stein theory is still applicable to our problem through its applications on $U$-statistics \cite{lach1,reitzner2013}, exhibiting yet another way this theory is useful in the study of dynamic point processes. Furthermore, the generality in the Malliavin-Stein theory for $U$-statistics allows us to extend our results in \cite{onaran2022functional} to sparser and denser regimes (determined by the choice of $r$) than the thermodynamic regime.

We conclude this section with some comments relating our model to another one which has been studied in some detail, 
and mentioning a practical use of the local functionals that we consider. 
Specifically,
our model is related to the `dynamic Boolean model' introduced in \cite{vandenberg}, where the authors  assume that a homogeneous Poisson point process in $\Rd$ is given at $t=0$ and, subsequently, each point moves according to a general continuous, stationary process. Concepts such as dynamic percolation, {coverage}, and {detection} were developed and studied in \cite{vandenberg} and in the many papers that were based on this model, including \cite{peresmobile} and \cite{stauffer}. Theoretical results have also found various applications, such as in mobile sensor networks \cite{liu2012dynamic}. The major difference between the model studied in the current paper and  the dynamic Boolean model is, of course, the inclusion of the birth-death dynamics. On a final note to the Introduction, we remark that a motivation to study the distribution of local interaction functionals of networks is that their distribution (also called `motifs' in the graph mining literature) serves as an important benchmark to detect anomalies (see \cite{chen2022algorithmic} and references).


The remainder of the paper is structured as follows: Section~\ref{notatsec} sets up the required notation and gives the main results of the paper. Section~\ref{prelimsec} relates the process we are studying to a particular marked point process that allows us to use results, e.g., Mecke's formula to write out formidable, but very useful, explicit expressions for various moments. The hard work is in Section~\ref{proofsec}, which, among other things, exploits these expressions to prove the results of Section~\ref{notatsec}.

\section{Notation and main results}\label{notatsec}
\seceq
Let $\td$ be a $d$-dimensional flat torus, taken as the  quotient space $\Rd/\mathbb{Z}^d$. Specifically, we will think of $\td$ as the cube 
$$
Q\coloneqq[-1/2,1/2)^d\subset \Rd,
$$ 
under the relation $0\sim 1$ \rev{coordinate-wise}, and endowed with the metric 
$$
\rho(x,y) \coloneqq \min_{\nu\in \mathbb{Z}^d} \|x-y+\nu \|, 
$$
for  $x,y\in Q$ and $\| \cdot \|$ the standard Euclidean metric.

Let $\fin(\td)$ denote the set of all finite subsets of $\td$. Then, as defined in the previous section, $\eta_n(t)$, the  set of points ``alive''  at time $t$, is an element of $\fin(\td)$. We are interested in the statistical properties of 
the additive function $f_{n}(t)$  defined on $\eta_n(t)$ by 
\begin{equation}\label{fntdef}
     f_{n}(t) \coloneqq \sum_{\mcY \subseteq \eta_n(t)} \xi_r (\mcY),
\end{equation}
where $\xi_r:\fin(\td)\to \mathbb{R}^+\coloneqq [0,\infty)$ satisfies $\xi_r(\mcY) = 0$ for all $|\mcY|\neq k$ for some $k\geq 2$. 
The value of $k$ will be fixed throughout this paper, and is therefore suppressed in the notation. In addition, $\xi_r$ is required to satisfy \rev{Assumptions~\ref{inv}-\ref{aecont}} for all  $0<r\leq 1$.

\begin{assumption}[Translation and scale invariance]\label{inv}
For any $\mcY\in \fin(\td)$ and $x\in\td$, define set translation and scalar multiplication in $\td$ by
\begin{align*}
    \mcY\oplus x &\coloneqq \{\pi( \pi^{-1}(y)+\pi^{-1}(x)): y\in \mcY \},\\
    \alpha \odot \mcY &\coloneqq \{\pi(\alpha \pi^{-1}(y)): y\in \mcY \},
\end{align*}
where $\pi:\Rd\to\td $ is the natural projection induced by the quotient operation and (with some abuse of notation) $\pi^{-1}: \td \to Q $ is the corresponding natural inverse. That is, $\pi^{-1}$ is the inverse of $\pi$ restricted to $Q$. For any $0<\alpha\leq 1$ we assume
\begin{align*}
    \xi_r(\mcY\oplus x) = \xi_r( \mcY),
\end{align*}
and 
\begin{align*}
    \xi_r(0,\mcY) =  \xi_{\alpha r}\left(0,\alpha \odot \mcY \right),
\end{align*}
where $\xi_r(0,\mcY)$ is a shorthand notation for $\xi_r(\{0\}\cup\mcY)$. 
\end{assumption}

Note that, throughout the paper, we will let $0$ denote both the origin in $\Rd$ and its projection on $\td$, since it will be clear from the context to which one we are referring.

\begin{assumption}[Localization]\label{localass}
There exists an $r$-independent constant $\delta\in (0,1/2)$ such that $\xi_r(\mcY) = 0$ if the diameter of the set  $ \mcY$ satisfies $\diam(\mcY)> \delta r$. 
\end{assumption}

\begin{assumption}[Boundedness]\label{boundedass}
The functional $\xi_r$ satisfies
\[
\|\xi\|_{\infty} \coloneqq \sup\limits_{\mcY\in \fin(\td)} \sup\limits_{0<r\leq 1} |\xi_r(\mcY)|<\infty.\]
\end{assumption}

\begin{assumption}[Feasibility] \label{feas} Let $\mathfrak{M}$ be  product Haar measure on $(\td)^{k-1}$. There exists a set $\mcH \subset (\td)^{k-1}$ satisfying $\mathfrak{M}(\mcH)>0$ such that
the functional $\xi_1$ satisfies,
    \begin{equation*}
     \xi_1(0, \bs x) >0 \quad\textrm{for all $\bs{x}\in \mcH$}.
    \end{equation*}
\end{assumption}

Note that here, and throughout the paper, we abuse notation somewhat, and allow $\xi_r$ to be applied to either finite subsets, or $k$-tuples. The latter will always be shown in bold face.



\begin{assumption}[Almost everywhere continuity]\label{aecont}
$\xi_1(0,\bs x)$ is $\mathfrak{M}$-almost everywhere continuous. 

\end{assumption}

A simple example should suffice to motivate both our assumptions and our results.

\begin{example}[Subgraph counts of geometric graphs]
 \label{example-graphs}
Let $G(\eta, r)$ be a geometric graph built over a finite set $\eta\subset \td$ with distance parameter $r$; i.e.\ an edge is placed between any two points in $\eta$ with distance less than or equal to $r$. Let $\mathcal{G}$ be a feasible geometric graph with $k$ vertices on $\td$; i.e.\  for iid uniform points $x_1,\ldots,x_k\in\td$, $\mathcal{G}$ satisfies 
\[\pr\left[\textrm{$G\left(\{x_1,\ldots,x_k\}, 1\right)$ is graph isomorphic to $\mathcal{G}$}\right]>0.\] 
Define
 $$\xi_r(\mcY) = \ind\{\textrm{$G(\mcY,r\delta/k)$ is graph isomorphic to $\mathcal{G}$}\}.$$
Then $\xi_r$ satisfies Assumptions \ref{inv}--\ref{aecont}, and ${f}_n(t)$ counts the number of subgraphs of $G(\eta_n(t),\allowbreak r\delta/k)$ graph isomorphic to $\mathcal{G}$. 
\end{example}


The \rev{finite-dimensional} limit theorems we prove in this paper will involve two different normalizations of the process $f_n(t)$, defined as
\begin{align}\label{barfndefeq}
\bar{f}_{n}(t) \coloneqq \frac{{f}_{n}(t) - \ex[{f}_{n}(t)] }{\sqrt{\text{var} [f_{n}(t)]} } 
\quad \text{and}\quad 
\tilde{f}_{n}(t) \coloneqq \frac{\bar{f}_{n}(t)}{\sqrt{2 M_n }} 
\end{align}
where
\[M_n\coloneqq \int_{0}^1\ex[\bar{f}_{n}(0) \bar{f}_{n}(t)]\der t.\]

In order to state our results succinctly, we use $\{\mc{U}_{j}(t):t\geq 0\}$, for some positive integer $j$, to denote the stationary, Gaussian, zero mean, Ornstein-Uhlenbeck (OU) process with  covariance function $$\text{cov}[\mc{U}_{j}(t_1), \mc{U}_{j}(t_2)] = \exp(-j|t_1-t_2|).$$
For a given sequence $\bc = (c_1,c_2,\ldots,c_k)$, \rev{$c_i\geq 0$ for all $1\leq i\leq k$,} we also define the following weighted superposition
\[
\mc{U}_{\bc}(t) := \sum_{j=1}^k c_j\mc{U}_j(t)
\]
  of \emph{independent} OU processes. We will denote the $\ell^2$ norm of the vector $\bc$ by $\|\bc\|$. 

Next, for a given positive integer $j$, 
let $\zeta_{j}:\mathbb R\to (0,1]$ be an even function decreasing in $[0,\infty)$ with $\zeta_j(0)=1$ and $\lim_{t\to\infty}\zeta_j(t)=0$, and let $\beta>0$. \rev{Also, let $\zeta_{j}$ be positive semi-definite, i.e., for all $\ell\geq 1$ and real numbers $\alpha_1,\ldots,\alpha_\ell$ the matrix $\mathrm{A} = \left(\zeta_j(\alpha_i-\alpha_m)\right)_{i,m=1}^\ell$.} is positive semi-definite.
We 
define $\mathcal{V}_{j}^\beta$ to be the zero mean, stationary Gaussian process with  covariance function

\begin{align}\label{premcVdef}
\text{cov}[\mc{V}_{j}^\beta(t_1), \mc{V}_{j}^\beta(t_2)] = \exp(-j|t_1-t_2|) \zeta_{j}(\beta|t_1-t_2|).
\end{align}
For a given $\bc = (c_1,c_2,\allowbreak\ldots, c_k)$, we define the  process
\begin{align}\label{mcVdef}
\mc{V}_{\bc}^\beta(t) := c_1\mc{U}_1(t) + \sum_{j=2}^k c_j\mc{V}_{j}^\beta(t),
\end{align}
 a weighted superposition of \emph{independent} processes $\{\mc{V}_j^\beta(t)\}_{j=2}^k$ and $\mc{U}_1(t)$. Note that for both the 
 $\mc{V}_{j}^\beta$ and $\mc{V}_{\bc}^\beta$, the functions $\zeta_j$ implicit in their definitions do not appear explicitly in the notation.
 
Throughout the paper, we use $f\lesssim g$ to denote that $f(n) = O(g(n))$, and $f\ll g$ to denote that $f(n) = o(g(n))$. In addition, $\asymp$ denotes same order; i.e.\ $\lim_{n\to\infty} f(n)/g(n)=C$ for some constant $C>0$. In particular, if $C=1$, then we denote this as $f(n)\approx g(n)$.


As already mentioned in the Introduction, for the following theorems and  throughout the paper we adopt the convention  that, despite the fact that  $r$ (the locality parameter) and $\sigma$ (the speed parameter) are actually  assumed to be functions of $n$, we drop the subscript $n$. Furthermore, we shall always hold the following assumption on the asymptotics of the parameters $n$, $r$, $\sigma$, which is needed for a central limit theorem in the static case \cite{penrosebook}.
\rev{\begin{assumption}[Asymptotics]\label{asymptoassum} We have
\[ 
\lim_{n\to\infty} r=\lim_{n\to\infty} \sigma = 0.
\]
and 
 \[
 n^kr^{d(k-1)} \to\infty.
 \]
\end{assumption} 
 }
 Finally, we shall use the notation $f_n \infdd f$ to denote the convergence of finite dimensional distributions of the stochastic processes $f_n$ to those of $f$.
 
 We can now state our main results. \rev{We remind the reader that our results hold under Assumptions~\ref{inv}-\ref{aecont} and \ref{asymptoassum}. The proofs which will follow through lemmas that establish covariance structure and finite dimensional distributions are given at the end of Section~\ref{proofssec}} 

\begin{theorem}[Slow regime]
\label{slow thm}
 If $\sigma  /r\to 0 $, then 
 \[
 \left\{\bar{f}_{n}(t):t\geq 0\right\}\  \infdd\  \left\{\mc{U}_{\bc}(t):t\geq 0\right\},
 \]
 for some $\bc$ with $\|\bc\|=1$.
 
 Furthermore, if $nr^d \to 0$, then $c_k=1$. If $ nr^d\to\infty$, then $c_1=1$. If $ nr^d\to\gamma\in (0,\infty)$, then $c_j>0$ for all $1\leq j\leq k$.  
\end{theorem}

\begin{theorem}[Moderate regime] 
\label{ModerateThm}
 If $\sigma /r \to \sqrt{\beta}\in(0,\infty)$, then
 \[
 \left\{\bar{f}_{n}(t):t\geq 0\right\} \ \infdd  \ 
 \left\{\mathcal{V}_{\bc}^\beta(t):t\geq 0\right\},
 \]
 for some $\bc$ with $\|\bc\|=1$, and some functions $\zeta_j$ (see Eq.~\eqref{premcVdef}). 
 
 The characterization of the entries of $\bc$ with respect to the asymptotics of $nr^d$ given in Theorem~\ref{slow thm} also holds here. 
\end{theorem} 

Recall that the definition of the process $\mathcal{V}_{\bc}^\beta$ 
involves a collection of  functions $\zeta_j$  as described prior to \eqref{premcVdef}, and the existence of these functions, along with the properties listed there, is implicit in the above theorem. They are dependent on moment properties of the interaction functionals, and are defined in Section~\ref{proofsec}, at \eqref{definezeta}.

\begin{theorem}[Fast regime]
\label{white noise thm}
 If \[ 
 1\ll\sigma /r \ll (n^kr^{d(k-1)})^{\frac{1}{4}-\epsilon},
 \] 
 for some $0<\epsilon<\frac{1}{4}$, and if $nr^d\to 0$, $n\sigma^d \lesssim 1 $, and $d(k-1)\geq 3$, then
 \[
 \left\{\int_0^t\tilde{f}_{n}(s)\der s:t\geq 0\right \} \ \infdd \ 
 \left\{B(t):t\geq 0\right\},
 \]
where $B$ is a standard Brownian motion. 
\end{theorem} 
\rev{
\begin{remark}
Note that the first and second moments of $f_n(t)$, which are used to define $\bar{f}_n(t)$ \eqref{barfndefeq}, do not depend on the speed parameter $\sigma$, due to stationarity. Specifically, we will later show in the proofs that
 $\ex[f_n(t)] \asymp n^kr^{d(k-1)}$, and  
\begin{alignat*}{4}
\var[f_n(t)] \asymp \begin{cases}  n^kr^{d(k-1)} \quad &\text{ if $nr^d\to 0$},\\
     n \quad &\text{ if $nr^d\to \gamma \in (0,\infty)$, and } \\
    n^{2k-1}r^{d(2k-2)} \quad &\text{ if $nr^d\to \infty$}.
    \end{cases}
\end{alignat*}
\end{remark}
}

Informally, three theorems above  characterize the relative effects of the birth-death and Brownian  dynamics on the limiting local functional. In the slow regime, Theorem~\ref{slow thm} states that the impact of the motion is {negligible}, consistent with the results of \cite{onaran2022functional}. In the moderate regime,
where $\sigma$ and $r$ are comparable, both the birth-death  and the motion of the points impact on the limit. The pure OU component \eqref{mcVdef} in the superposition defining the limit process in this regime comes from  correlations between the total numbers of points in the system across different time instances, therefore playing the same role  as it did in the slow regime. The other components, reflecting the effect of the Brownian motions, have a covariance function with faster decay than that of the pure OU component.   

In the fast regime, the influence of  the birth-death component weakens in all the components but the first. To obtain a meaningful convergence result in this regime, however, we need to assume that  the locality is strong,  in the sense that $nr^d\to 0$. With the correct normalization, this weakens the correlation over time, resulting in a `white noise' limit, which is indicated by its time-integrated version converging in finite dimensional distributions to the Brownian motion.  It is plausible that the additional asymptotic upper bounds on $\sigma /r$ and $n\sigma^d$ in the conditions of Theorem~\ref{white noise thm} are not essential, but rather an artifact of our proof techniques. The asymptotics for the normalization, $M_n$ \eqref{barfndefeq}, necessary for the proof of Theorem~\ref{white noise thm}, will be presented in Proposition~\ref{propintasymp}.

Finally, we note that, as opposed to the treatment in \cite{onaran2022functional}, our convergence results here are not in terms of full weak convergence in the Skorokhod space. The missing component is tightness, which it would be natural to handle via a fourth moment argument. \rev{ In \cite{onaran2022functional} 
fourth moment calculations were helped by the fact that the lack of motion by the particles meant values of the functional on disjoint collections of particles were independent, regardless of time.  This is no longer true when the particles are allowed to move. (See the equation below A.4 in \cite{onaran2022functional}, for which we do not have a simple analogue.)} Consequently, computing these moments
turned out to be a computationally challenging, not to mention exceedingly tedious, task,
which does not seem to justify the considerable effort and space it would require in this
paper. 

\section{Preliminaries}\label{prelimsec}
\seceq
In this section we introduce the main tools that we will use in our proofs.

\subsection{Marked process model}
Here we describe a marked Poisson process model that is stochastically equivalent to $\eta_n(t)$ on any predetermined time interval $[0,T]$. This will make the notation easier and the proofs more intuitive. It will also let us use the normal approximation theory of \emph{U}-statistics in \cite{lach1,lach2,reitzner2013} by representing the dynamic model in a Poisson space. In this marked model, which we will denote as $\wh \eta_{n,T}$ for some $T>0$, we are given a homogeneous Poisson point process on the unit cube $Q$ with the rate $n(1+T)$. Each point, $x$, in the configuration is also given three independent marks $(B_x,L_x,Z_x)$. The \emph{birth-time} $B_x$ is defined as
\[
    B_x = Y_xU_x,    
\]
where $Y_x,U_x$ are independent random variables, 
\[\pr[Y_x=0] = 1-\pr[Y_x = 1] = \frac{1}{1+ T},\]
and $U_x$ is uniformly distributed on $[0,T]$.
The \emph{lifetime} $L_x$ follows an exponential distribution with unit mean. Finally, the \emph{path} mark $Z_x\in C_{\Rd}[0,T]$ is an independent Brownian motion in $\Rd$ with variance $\sigma^2$ in each dimension, satisfying $Z_x(0)=0$. Here $C_{\Rd}[0,T]$ denotes the set of continuous functions $[0,T]\to\Rd$. Note that, equivalently, $\wh \eta_{n,T}$ can be described as a Poisson point process on the product space $Q\times\mathbb R_+ \times \mathbb R_+ \times C_{\Rd}[0,T]$ due to the \emph{Marking theorem} \cite[Theorem 5.6]{poissonlectures}. 

Next, for  each point $\wh{x} \coloneqq \left(x,(B_x,L_x,Z_x)\right)$ of the process $\wh\eta_{n,T}$, define
\begin{align}
    \tau_t(\wh{x}) :=& \ind\{B_x \le t < B_x+L_x\},\label{tautxdef}\\
    \tau_t({\mchX}) :=& \prod_{\wh{x}\in\mchX}\tau_t({\wh{x}}), \quad t\geq0.\nn
\end{align} 
Thus $\tau_t(\wh{x})$ is the indicator function registering whether or not $\wh{x}$ is alive at time $t$, and $\tau_t({\mchX})$  registers whether or not  all points in the collection $\mchX$ are alive at $t$.  We also use 
\begin{align}\label{pimchxdef}
    \mchX (t)\coloneqq \left\{x+Z_x(t)-Z_x(B_x): \wh x\in\mchX\right\},  
\end{align}
to denote the locations of each point in $\mchX$ at time $t$, and the shorthand notation \begin{align}\label{whxidef}
    \wh \xi_r(\mchX (t))\coloneqq \xi_r(\pi(\mchX (t))),
\end{align} 
where the projection $\pi$ acts on $\mchX (t)$ element-wise. 
With this notation, the random process that is
of core interest to us is therefore $\wh\eta_{n,T}$, and the main candidate for our limit theorems is 
\begin{align}\label{fnhatdef}
    \widehat f_n(t)\coloneqq \sum_{\mchX\subseteq \wh \eta_{n,T}}  \wh \xi_r(\mchX (t))  \tau_t(\mchX).
\end{align}
 
\begin{remark}
The following  properties of $\wh \xi_r$,  which play a major role  in our proofs, follow from the  natural assumptions on $\xi_r$.
\normalfont
\begin{description}
    \item[Translation and Scale Invariance.] For any $\mcX\in \fin(\Rd)$, $x\in\Rd$, and $0< r\leq 1$,
    \begin{align}\label{transinvxihat}
       \wh\xi_r(\mcX + x) = \xi_r\left(\pi(\mcX + x)\right) = \xi_r\left(\pi(\mcX) \oplus \pi(x)\right) = \xi_r\left(\pi(\mcX) \right) = \wh\xi_r(\mcX )
    \end{align}
Furthermore, if $\mcX\in \fin(Q)$,
\begin{align}\label{scaleinvxihat}
    \wh\xi_1(0,\mcX) &=  \xi_{1}\left(0,\pi(\mcX) \right) = \xi_{r}\left(0,\pi\left(r \pi^{-1}\left( \pi(\mcX)\right)\right) \right)
    = \xi_{r}\left(0,\pi\left(r \mcX\right) \right)= \wh \xi_{r}\left(0,r \mcX\right).
\end{align}

\item[Feasibility.] \label{feasxihat}
Assumption~\ref{feas} implies the existence of a nonempty set $\pi^{-1}(\mcH)\subset Q^{k-1}$ with positive Lebesgue measure, for which $\wh \xi_1(0, \bs y)>0$ for all $y\in \pi^{-1}(\mcH)$, a fact that we will use often in our proofs. With $\wh \xi_1(0, \bs y)>0$, we abuse the notation between $k$-tuples and the sets, as explained in Assumption~\ref{feas}.  

\item[Locality.] \label{localxihat}
Due to Assumption~\ref{localass}, for all $0<r\leq1$, there exists 
a set $\mathcal{Z}\subset Q^{k-1}$ with positive Lebesgue measure such that, 
\begin{equation*}
     \wh \xi_r(0, \bs y) =0 \quad\textrm{for all $\bs{y}\in \mathcal{Z}$.}
\end{equation*}
In particular, $\mathcal{Z}\supseteq Q\setminus B_{\delta r}(0)$, where 
\[B_{\delta r}(\nu)\coloneqq \{x\in \Rd: \|x-\nu\|\leq \delta r \}, \quad \nu\in \zd.\]

\item[Continuity.]\label{aecontremark}
Due to Assumption~\ref{aecont}, $\wh \xi_1(0,\bs y)$ is Lebesgue almost everywhere continuous on $Q^{k-1}$.
\end{description}
\end{remark}

\begin{remark}\label{stationspacexihat}
Two identities related to the marked process construction given below will prove essential in our proofs. \efe{Note that they both rely on the fact that we work on the torus, a compact space.
}

\normalfont
\begin{description}

\item[Spatial homogeneity.]

Let $X$ be uniformly distributed in the cube $Q = [-1/2,1/2)^d\subset\Rd$ and let $Y\in\Rd$ be a random vector  independent of $X$. Then, $\pi(X)$ has the same distribution as $\pi(X+Y)$.

\item[Stationarity in time.]
Due to the spatial homogeneity, independence of the marks of each point, and the Markov property of Brownian motion and exponential lifetimes, joint distribution of $\wh f_n(t_1)$ and $\wh f_n(t_2)$ is the same as that of $\wh f_n(0)$ and $\wh f_n(t_2-t_1)$ for all $0\leq t_1\leq t_2\leq T $.
\end{description}
\end{remark}

The next observation that follows just from the construction of the marked point process will enable studying the distribution of $f_n(t)$ via that of $\wh f_n(t)$. 
\begin{proposition}\label{Propsamedist}
For any $T>0$,
    $\{\widehat f_n(t): 0\leq t\leq T\}$ and $\{f_n(t): 0\leq t\leq T \}$ have the same finite dimensional distributions.
\end{proposition}
\begin{proof}
For $m\geq 1$, take $0\leq t_1<t_2<\ldots< t_m\leq T$ and $\omega_1,\ldots, \omega_m\in \mathbb{R}_+$, Define
    $\eta_{n,T}$ to be the subset of $\td$ composed of the initial locations of the points of $\eta_n(t)$ that are born in the interval $0< t\leq T$.
Note that, $\eta_{n, T}$ is a Poisson point process
on $\td$ with rate $nT$, and  therefore $\eta_{n, T} \cup \eta_n(0)$ is also Poisson process on $\td$, but with rate $n(1+T)$. (c.f.\ The {superposition theorem} for Poisson processes, e.g.\ Theorem 3.3 in \cite{poissonlectures}). From \eqref{fntdef} we can write 
\begin{equation}\label{fntobs33}
     f_{n}(t) \coloneqq \sum_{\mcY \subseteq \eta_n(0)\cup \eta_{n, T}} \xi_r (\mcY(t))\prod_{y \in \mcY}\ind\{\textrm{$y$ is alive at $t$}\}.
\end{equation}
where $\mcY(t)$ is a shorthand notation for the locations of points of $\mcY$ at time $t$. 
Note that $ \mcY(t)$ and $\ind\{\textrm{$y$ is alive at $t$}\}$ are independent, and due to the construction of $\wh\eta_{n,T}$, they have the same distributions as $\pi(\wh \mcX(t))$ and $\tau_t(\wh x)$ given in \eqref{pimchxdef} and \eqref{tautxdef}, respectively. 
Therefore, comparing \eqref{fnhatdef} and \eqref{fntobs33}, the following holds
\[
    \ex\left[\exp\Big(-\sum_{i=1}^m \omega_i f_n(t_i)\Big)\Big|\, \eta_{n, T} \cup \eta_n(0) \right] =
    \ex\left[\exp\Big(-\sum_{i=1}^m \omega_i \wh f_n(t_i)\Big)\Big|\,  \wh\eta_{n,T} \right]. 
\]
Equivalence of the joint distributions $f_n(t_1),\ldots, f_n(t_m)$ and $\wh f_n(t_1),\ldots,\wh  f_n(t_m)$ follows from the fact that $\eta_{n, T} \cup \eta_n(0)$ and $\wh\eta_{n,T}$ have the same distributions, and that the distributions of random vectors are determined through their Laplace transforms (see Proposition B.4 in \cite{poissonlectures}).
\end{proof}

\subsection{Counting lemma}
The following  lemma  generalizes common counting techniques for Poisson processes, when the counted objects assume a specific structure of intersections. The following notation is needed for its statement. 

\begin{definition}\label{defn:pattern}
Let $\cI_\ell$ denote a collection of natural numbers indexed by the nonempty subsets of $[\ell] = \{1,\ldots,\ell\}$, i.e. $\cI_\ell = (I_J)_{J\subset [\ell], J\ne\emptyset}$. Given a sequence of finite subsets $\mcX_{1},\ldots,\mcX_{\ell}$ of a set $\cQ$, suppose that for all nonempty $J\subset[\ell]$ we have
\[
\left| \left(\bigcap_{j\in J} \mcX_j \right)\cap\left(\bigcap_{j\not\in J} \stcomp\mcX_j \right)\right| = I_J,
\]
where $\stcomp\mcX_j = \cQ\setminus \mcX_j$.
In this case we say that $\mcX_{1},\ldots,\mcX_{\ell} $ obey the \emph{intersection pattern} $\cI_\ell$, and denote this by
$(\mcX_{1},\ldots,\mcX_{\ell}) \in \cI_\ell$.
\end{definition}

In what follows, we will typically  write $\cI$ for $\cI_\ell$, unless $\ell$ is explicitly required. Set 
\[|\cI| := \sum_{J} I_{J}  = |\mcX_1\cup\cdots\cup \mcX_{\ell}|.\]
Fixing $\cI$, and given a tuple of points ${\bs x} = (x_1,\ldots,x_{|\cI|})$ in $\cQ$, 
let
\[
\Psi_{\cI}(\bs x) \coloneqq (\mcX_1,\ldots,\mcX_\ell) 
\]
be a splitting of the tuple $\bs x$ into  $(\mcX_1,\ldots,\mcX_\ell)\in \cI$, in an arbitrary but fixed manner. 
\efe{
\begin{example}
    Take $\cQ = \mathbb{Z}$, $\ell=3$, and $\mcX_1=\{1,2\}$, $\mcX_2=\{2,3\}$, $\mcX_3=\{2,3,4,5\}$. Then, $(\mcX_1, \mcX_2, \mcX_3)\in \cI$, $\cI = (I_{\{1\}}, I_{\{2\}}, I_{\{3\}}, I_{\{1,2\}}, I_{\{2,3\}}, I_{\{1,3\}}, I_{\{1,2,3\}})$ with $I_{\{1\}} = I_{\{2,3\}} = 1$, $I_{\{2\}} = I_{\{1,2\}} = I_{\{1,3\}}= 0$, $I_{\{3\}} = 2$, $I_{\{1,2,3\}} = 1$, and $|\cI| = 5$. On the other hand, if we were given $\bs x = (1,2,3,4,5)$, a $\Psi$, the exact definition of which is omitted to save space, can be found which would  satisfy  $\Psi_{\cI}(\bs x) = (\mcX_1,\mcX_2,\mcX_3)$. 
\end{example}}

 The following statement is a generalization of the well-known Mecke's formula for Poisson point processes. 

\begin{lemma}\label{Palmhigh}
For any $n>0$, let $\mcP_{n}$ be a Poisson point process on the point space $\cQ$ with intensity measure $n\mu$, where $\mu$ is a probability measure on $\cQ$. Let $h( \bar{\mcX}) $ be a bounded measurable real function with $\bar{\mcX} \coloneqq(\mcX_{1},\ldots,\mcX_{\ell})$ and $\mathcal{I}$ an intersection pattern. Then,
\[
     \ex\Big[\sum_{\mcX_{1} \subseteq \mcP_{n}} \cdots \sum_{\mcX_{\ell} \subseteq \mcP_{n}} h(\bar{\mcX})\ind\{ \bar{\mcX} \in \cI\}\Big]  =  \frac{n^{|\cI|} \ex \big[h\big(\Psi_{\cI}(\bs X)\big) \big] }{\prod_{J}I_{J}!},   
\]
where $\bs X$ is a tuple of $|\cI|$ iid points in $\cQ$, with distribution $\mu$.  
\end{lemma}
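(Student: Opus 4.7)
The plan is to reparametrize the multi-index sum in terms of the ``atomic'' pieces cut out by the intersection pattern, and then reduce to the multivariate Mecke formula for Poisson processes. Concretely, given any $\ell$-tuple $(\mcX_1,\ldots,\mcX_\ell)$ with $(\mcX_1,\ldots,\mcX_\ell)\in\cI$, for each nonempty $J\subseteq[\ell]$ I would define
\[
\mcY_J \coloneqq \Bigl(\bigcap_{j\in J}\mcX_j\Bigr)\cap\Bigl(\bigcap_{j\notin J}\stcomp\mcX_j\Bigr),
\]
so that the $\mcY_J$ are pairwise disjoint with $|\mcY_J|=I_J$, and conversely $\mcX_j=\bigcup_{J\ni j}\mcY_J$. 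This gives a bijection between $\ell$-tuples obeying $\cI$ and collections $(\mcY_J)_J$ of disjoint subsets of $\mcP_n$ with $|\mcY_J|=I_J$. Hence
\[
\sum_{\mcX_1\subseteq\mcP_n}\!\cdots\!\sum_{\mcX_\ell\subseteq\mcP_n} h(\bar\mcX)\,\ind\{\bar\mcX\in\cI\} \;=\; \sum_{\substack{(\mcY_J)_J \\ \mcY_J\subseteq\mcP_n,\,|\mcY_J|=I_J,\,\text{disjoint}}} h\bigl(\bigcup_{J\ni 1}\mcY_J,\ldots,\bigcup_{J\ni\ell}\mcY_J\bigr).
\]

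Next I would convert each unordered $\mcY_J$ into an ordered tuple of size $I_J$, which overcounts by $\prod_J I_J!$. Concatenating these tuples in the fixed order dictated by $\Psi_\cI$ yields an ordered tuple $\bs x = (x_1,\ldots,x_{|\cI|})$ of $|\cI|$ pairwise distinct points of $\mcP_n$, and the summand becomes exactly $h(\Psi_\cI(\bs x))$ by construction of $\Psi_\cI$. Thus the whole sum equals
\[
\frac{1}{\prod_J I_J!}\sum_{(x_1,\ldots,x_{|\cI|})^{\neq}\subseteq\mcP_n} h\bigl(\Psi_\cI(\bs x)\bigr).
\]
Taking expectation and invoking the multivariate Mecke / Campbell formula for Poisson processes (e.g.\ Theorem 4.4 in Last--Penrose) gives
\[
\ex\!\!\sum_{(x_1,\ldots,x_{|\cI|})^{\neq}\subseteq\mcP_n} \!\! h(\Psi_\cI(\bs x)) = n^{|\cI|}\!\int\! h(\Psi_\cI(\bs x))\,d\mu^{\otimes|\cI|}(\bs x) = n^{|\cI|}\,\ex\bigl[h(\Psi_\cI(\bs X))\bigr],
\]
where the last equality uses that $\mu$ is a probability measure and the iid nature of $\bs X$. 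Combining the two displays yields the claim.

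There are no substantial obstacles; the argument is entirely combinatorial bookkeeping plus an appeal to a standard Poisson formula. The only points that need a moment of care are: (i) checking that the bijection $(\mcX_1,\ldots,\mcX_\ell)\leftrightarrow(\mcY_J)_J$ is well-defined (immediate from the disjointness of the pattern decomposition); (ii) verifying that the choice of ``arbitrary but fixed'' splitting map $\Psi_\cI$ is immaterial, which follows because $\bs X$ is exchangeable so $\ex[h(\Psi_\cI(\bs X))]$ is invariant under the precise way the $|\cI|$ coordinates of $\bs x$ are partitioned into blocks of sizes $(I_J)_J$; and (iii) handling measurability and the passage from a.s.\ distinct iid tuples to the ordered distinct sum over $\mcP_n$, which is the standard content of the multivariate Mecke identity and requires only the boundedness/measurability of $h$ already assumed.
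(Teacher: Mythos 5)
Your proof is correct, and it is the standard argument behind this lemma: the paper itself gives no inline proof, deferring to Lemma~3.4 of \cite{onaran2022functional}, whose proof proceeds exactly as you do --- reparametrizing the tuple sum by the disjoint intersection atoms $\mcY_J$, passing to ordered distinct $|\cI|$-tuples at the cost of the factor $\prod_J I_J!$, and applying the multivariate Mecke formula together with exchangeability of the iid points to dispose of the arbitrariness in $\Psi_{\cI}$. Nothing is missing; your points (i)--(iii) cover the only delicate steps.
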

\begin{proof} See the proof of Lemma~3.4 in  \cite{onaran2022functional} for the special case $\cQ = \Rd$, which can be generalized in a straightforward manner.
\end{proof}

\section{Proofs}\label{proofsec}
\seceq
\subsection{Covariance characterization}
As a first step towards proving  the finite dimensional weak convergence of $\bar f_n(t)$ and $\int_0^t\tilde{f}_{n}(s)\der s$, we  derive expressions for their limiting covariance functions. 
The lemmas below characterize the limit of the covariance function $\ex[\bar f_n(t) \bar f_n(t+\Delta)]$ in different regimes of $r$ and $\sigma$ with respect to $n$, where $\Delta$ may also depend on $n$, but $\Delta\lesssim 1$. Note that for the proofs of Theorems~\ref{slow thm} and \ref{ModerateThm} we need only the special cases of these lemmas when $\Delta>0$ is a constant. We state them in the more generality since we will need them where $\Delta$ is also allowed to change with $n$ for the proof of Theorem~\ref{white noise thm}. 

\begin{lemma}\label{covOUlemma}
If $\sigma\sqrt{\Delta}/r \to 0$ then there exist non-negative constants $\lambda_1,\ldots,\lambda_k$, with $\sum_{j=1}^k\lambda_j=1$, such that, for all $t\geq 0$,
\begin{equation*}
     \ex[\bar f_n(t) \bar f_n(t+\Delta)] \approx \sum_{j=1}^k \lambda_j \exp(-j\Delta). 
\end{equation*}
\rev{If $\lim_{n\to\infty} nr^d =0$ then $\lambda_k=1$. If  $\lim nr^d \in(0,\infty)$ then the constants $\lambda_1,\ldots, \lambda_k$ are strictly positive and depend on the functional $\xi_1$ (see \eqref{lambdaexact}). Finally, if $\lim_{n\to\infty} nr^d =\infty$ then $\lambda_1=1$.}
\end{lemma}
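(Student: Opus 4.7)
The plan is to use the marked point process representation (Observation~\ref{Propsamedist}) and the time-stationarity noted in Remark~\ref{stationspacexihat} to reduce the problem to evaluating $\cov[\wh f_n(0),\wh f_n(\Delta)]/\var[\wh f_n(0)]$. Expanding $\wh f_n$ via \eqref{fnhatdef}, the double sum decomposes over the overlap size $j=|\mchX_1\cap\mchX_2|\in\{0,1,\ldots,k\}$; the $j=0$ piece cancels $\ex[\wh f_n(0)]\,\ex[\wh f_n(\Delta)]$ by Poisson independence. For each $j\geq 1$ the remaining contribution $C_j$ fits Lemma~\ref{Palmhigh} with intersection pattern $(I_{\{1\}},I_{\{2\}},I_{\{1,2\}})=(k-j,k-j,j)$ on the marked ground space.

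Carrying out the mark expectations, each of the $2k-j$ distinct points contributes a probability $1/(1+T)$ of being alive at its associated time, and each of the $j$ shared points additionally supplies a factor $e^{-\Delta}$ from the memoryless exponential lifetime. Combined with the intensity $n(1+T)$ of $\wh\eta_{n,T}$, this yields
\[
C_j \;=\; \frac{n^{2k-j}}{j!\,((k-j)!)^2}\, e^{-j\Delta}\, J_j(r,\sigma,\Delta),
\]
where $J_j$ is a $d(2k-j-1)$-dimensional integral (one position fixed by \eqref{transinvxihat}) of $\wh\xi_r(0,\bs y_1)\,\wh\xi_r(\bs u+\bs W,\bs y_2)$, with $\bs u$ placing the shared points, $\bs y_1,\bs y_2$ the non-shared ones, and $\bs W$ a Gaussian vector of coordinatewise standard deviation $O(\sigma\sqrt\Delta)$ collecting the relevant Brownian displacements. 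Applying \eqref{scaleinvxihat} and rescaling $(\bs u,\bs y_1,\bs y_2)=r(\bs u',\bs y_1',\bs y_2')$ then extracts a factor $r^{d(2k-j-1)}$ and converts the integrand to $\wh\xi_1$ evaluated at a displacement of standard deviation $O(\sigma\sqrt\Delta/r)$. Assumptions~\ref{localass}, \ref{boundedass} and \ref{aecont} together justify a dominated-convergence passage to the motion-free limit, giving $J_j\approx r^{d(2k-j-1)} I_j$, with $I_j$ the static integral already appearing in \cite{onaran2022functional}.

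Writing $n^{2k-j}r^{d(2k-j-1)}=(nr^d)^{2k-j}/r^d$ and normalizing by $\var[\wh f_n(0)]=\sum_j C_j|_{\Delta=0}$ produces the claimed form $\sum_j \lambda_j e^{-j\Delta}$ with $\sum_j\lambda_j=1$. Since $2k-j$ is decreasing in $j$, the dominant index is $j=1$ when $nr^d\to\infty$ (giving $\lambda_1=1$), $j=k$ when $nr^d\to 0$ (giving $\lambda_k=1$), and all $j$ contribute comparably when $nr^d\to\gamma\in(0,\infty)$; in that last case every $\lambda_j$ is strictly positive and therefore depends on $\xi_1$ through $I_j$, with $I_k>0$ immediate from Assumption~\ref{feas} and $I_j>0$ for $1\le j<k$ following by a small perturbation within $\pi^{-1}(\mcH)$.

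The main obstacle I anticipate is the uniformity of the motion-elimination step: later applications of this lemma allow $\Delta$ to depend on $n$ (in particular $\Delta\to 0$ in the fast regime), so the dominated-convergence argument cannot simply be carried out for fixed $\Delta$. This is precisely why the hypothesis is phrased in the scale-free combination $\sigma\sqrt\Delta/r\to 0$: after rescaling by $r^{-1}$, the displacement $\bs W/r$ has size controlled by this ratio, the bounding envelope provided by Assumptions~\ref{localass} and \ref{boundedass} is $\Delta$-independent, and Assumption~\ref{aecont} supplies the pointwise almost-everywhere convergence of the integrand needed to close the argument.
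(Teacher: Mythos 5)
Your proposal is correct and follows essentially the same route as the paper: reduction to $\cov[\wh f_n(0),\wh f_n(\Delta)]/\var[\wh f_n(0)]$ via Observation~\ref{Propsamedist} and stationarity, Mecke decomposition over the overlap size $j$ with the mark expectations producing exactly the factor $(1+T)^{-(2k-j)}e^{-j\Delta}$ that cancels the intensity, rescaling by $r$ with scale invariance to extract $r^{d(2k-j-1)}$, a dominated-convergence passage (with the $\Delta$-independent envelope from Assumptions~\ref{localass}--\ref{boundedass} and pointwise a.e.\ convergence from Assumption~\ref{aecont}) showing $\theta_j(r)\approx\alpha_j(r)$, and finally the ratio asymptotics in $nr^d$ yielding $\lambda_j$ as in \eqref{lamjdef}. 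The only cosmetic difference is that the paper, because \eqref{scaleinvxihat} holds only on $Q$, splits the Gaussian integral over $B_{(r/\sigma)^{1/2}}$ and its complement and kills the tail with Mills' inequality, whereas your pointwise-in-$(u,\bs v)$ DCT argument (the identity applies eventually at each fixed point since $\sigma\sqrt{\Delta}/r\to 0$) achieves the same thing.
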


\begin{lemma}\label{covModlemma}
If $\sigma\sqrt{\Delta}/r \to \sqrt{\beta}\in(0,\infty)$ then there exists a set of strictly positive, decreasing functions $\zeta_j:[0,\infty)\to (0,1]$, $\zeta_j(0)=1$, $2\leq j\leq k$, \rev{specified in \eqref{definezeta},} such that, for all $t\geq 0$,
\[
     \ex[\bar f_n(t) \bar f_n(t+\Delta)] \approx \lambda_1 \exp(-\Delta) + \sum_{j=2}^k \lambda_j \exp(-j\Delta)\zeta_j(\beta). 
\]
The functions $\zeta_2,\ldots, \zeta_k$ depend on $\xi_1$, and the constants $\lambda_1,\ldots, \lambda_k$ are the same as those in Lemma~\ref{covOUlemma}.
\end{lemma}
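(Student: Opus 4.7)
The plan is to mirror the derivation of Lemma~\ref{covOUlemma} while carefully tracking the now non-negligible Brownian increments, whose rescaled variance tends to $\beta$ rather than $0$. By Observation~\ref{Propsamedist}, I may work with $\widehat f_n$; by stationarity in time (Remark~\ref{stationspacexihat}), it suffices to study $\cov[\widehat f_n(0),\widehat f_n(\Delta)]$. Expanding this covariance and grouping pairs of $k$-subsets $(\mchX_1,\mchX_2)$ of $\widehat\eta_{n,T}$ by the intersection size $j := |\mchX_1\cap\mchX_2|$, the $j=0$ contribution cancels against $\ex[\widehat f_n(0)]\,\ex[\widehat f_n(\Delta)]$, so only $j=1,\ldots,k$ survive.

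For each such $j$, I apply the generalized Mecke identity (Lemma~\ref{Palmhigh}) with intersection pattern $I_{\{1\}}=I_{\{2\}}=k-j$ and $I_{\{1,2\}}=j$ to rewrite the $j$-th contribution as a constant multiple of
\[
n^{2k-j}\,\ex\!\left[\widehat\xi_r(\mchX_1(0))\,\widehat\xi_r(\mchX_2(\Delta))\,\tau_0(\mchX_1)\,\tau_\Delta(\mchX_2)\right],
\]
the expectation being over $2k-j$ iid uniform marked points. The lifetime/birth-time factor splits across points: each of the $j$ shared points contributes an $e^{-\Delta}$ (memoryless property, on the event of being alive at both times), which together produce the advertised $e^{-j\Delta}$, while the remaining $2(k-j)$ points give only a $T$-dependent constant. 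Next, spatial homogeneity (Remark~\ref{stationspacexihat}) lets me pin one shared point to the origin, and rescaling the remaining $2k-j-1$ positions by $r$ converts $\widehat\xi_r$ into $\widehat\xi_1$ via \eqref{scaleinvxihat} and extracts a volume factor $r^{d(2k-j-1)}$. Crucially, the Brownian increments $Z_i(\Delta)$ of the $j$ shared points become, after this rescaling, Gaussian vectors with per-coordinate variance $\sigma^2\Delta/r^2\to\beta$. Dominated convergence, justified by Assumptions~\ref{boundedass} and~\ref{aecont}, then gives a limit of the form $\lambda_j\,e^{-j\Delta}\,\zeta_j(\beta)$, where $\lambda_j$ coincides with the coefficient from Lemma~\ref{covOUlemma} (since the motion-free parts factor identically), and
\[
\zeta_j(\beta)\;\coloneqq\;\frac{\ex\!\left[\widehat\xi_1(0,\bs Y)\,\widehat\xi_1\!\left(0,Y_2+\sqrt{\beta}W_2,\ldots,Y_j+\sqrt{\beta}W_j,Y'_{j+1},\ldots,Y'_k\right)\right]}{\ex\!\left[\widehat\xi_1(0,\bs Y)\,\widehat\xi_1(0,Y_2,\ldots,Y_j,Y'_{j+1},\ldots,Y'_k)\right]}
\]
with independent uniform $\bs Y=(Y_2,\ldots,Y_k)$, $Y'_{j+1},\ldots,Y'_k$ and independent standard Gaussians $W_2,\ldots,W_j\in\Rd$. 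For $j=1$ the sole shared point's displacement is absorbed by translation invariance, so its motion factor is identically $1$; this explains the isolated $\lambda_1\,e^{-\Delta}$ term in the statement.

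It remains to verify the stated properties of $\zeta_j$ for $2\leq j\leq k$: $\zeta_j(0)=1$ is immediate; strict positivity on $[0,\infty)$ follows from Assumption~\ref{feas} combined with the full support of the Gaussian density (so the numerator's integrand is positive on a set of positive measure for every $\beta$); monotone decrease in $\beta$ can be obtained through the coupling $\sqrt{\beta_2}\,W=\sqrt{\beta_1}\,W+\sqrt{\beta_2-\beta_1}\,W'$ together with conditional-expectation arguments and the locality constraint from Assumption~\ref{localass}; and $\lim_{\beta\to\infty}\zeta_j(\beta)=0$ follows from locality and dominated convergence, since arbitrarily large displacements push the rescaled configuration outside $B_\delta(0)^{k-1}$. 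I expect the main technical obstacle to be the uniform-in-$\Delta$ dominated-convergence step (needed when this lemma is invoked for Theorem~\ref{white noise thm}, where $\Delta$ itself varies with $n$) and the precise matching of $\lambda_j$ with its slow-regime counterpart; both can be handled by cleanly isolating the motion-dependent sub-integral before normalization.
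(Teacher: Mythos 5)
Your overall architecture is the same as the paper's (Mecke expansion by intersection size $j$, cancellation of $j=0$, factorization of the lifetime marks into $e^{-j\Delta}$, pinning a shared point, rescaling by $r$ to trade $\wh\xi_r$ for $\wh\xi_1$ and extract $r^{d(2k-j-1)}$, then dominated convergence), but your identification of the limiting Gaussian displacement law --- the heart of this lemma --- is wrong. After you pin the first shared point at the origin at time $0$ and invoke translation invariance at time $\Delta$, you must subtract the \emph{time-$\Delta$} position of that same point, which has itself moved by a displacement $D_1\sim N(0,\sigma^2\Delta\, \mathrm{I}_d)$. The shape of the second configuration therefore involves the relative displacements $w_i = D_{i+1}-D_1$, $1\le i\le j-1$, which after rescaling are jointly Gaussian with per-coordinate covariance $\beta(\mathrm{I}_{j-1}+\mathrm{J}_{j-1})$: variance $2\beta$ on the diagonal and covariance $\beta$ off the diagonal, with inverse $\mathrm{M}^{j,\beta}=\frac{1}{j\beta}(j\mathrm{I}_{j-1}-\mathrm{J}_{j-1})$ exactly as in \eqref{zetajtilde}--\eqref{matrixMdef}. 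Your candidate formula instead inserts \emph{independent} displacements $\sqrt{\beta}\,W_i$ with per-coordinate variance $\beta$, i.e.\ it tacitly conditions $D_1=0$; already for $j=2$ this is off by a factor of $2$ in the variance, and for $j\ge 3$ the independence is also false. The correct limit is obtained in the paper by integrating out the common displacement $u$ (the variable carrying $D_1$) via the completion-of-squares identity \eqref{uzexpansion}, which is precisely what produces the coupled quadratic form in $\tilde\zeta_j(\beta)$. Since the lemma's $\zeta_j$ are the specific functions later wired into \eqref{definezeta} and Theorem~\ref{ModerateThm}, this is not a harmless reparametrization: your derivation, carried out honestly, does not converge to the expression you wrote down.

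A second, more technical omission: your DCT step ignores that the Brownian motions live in $\Rd$ while $\wh\xi_r$ is periodic, so the relevant density is the wrapped Gaussian $\tilde\varphi(u)=\sum_{\nu\in\zd}\varphi(u+\nu)$, and that the scale invariance \eqref{scaleinvxihat} is valid only inside $Q$. The paper handles both by splitting $\theta_j(r)=\vartheta_j(r)+\vartheta_j'(r)$ at a cutoff radius $R$ (Lemma~\ref{intermlem}), showing $\tilde\varphi\approx\varphi$ on $\|u\|\le \frac{1}{4d}$ and bounding the tail term by $\exp(-jR^2/(2\Delta\sigma^2))$, with the moderate-regime choice $R=\sigma\sqrt{\Delta r^{-\epsilon}\log r^{-1}}$ making $\vartheta_j'\ll\vartheta_j$. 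Without some such truncation your application of scale invariance on all of $(\Rd)^{j}$ is not justified. (By contrast, your concern about uniformity in $\Delta$ is not an extra obstacle here: the paper's proof works verbatim under $\sigma\sqrt{\Delta}/r\to\sqrt{\beta}$ with $\Delta$ allowed to vary with $n$, and the monotonicity and $\beta\to\infty$ decay of $\zeta_j$, which you sketch by coupling, are likewise not verified in detail in the paper's own proof.)
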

\rev{
\begin{remark}
In our proof of the above lemma, we will find that there are concise and probabilistically interpretable expressions for the functions $\zeta_j$. See \eqref{definezeta} and \eqref{zetajprob}.   
\end{remark}}

\begin{lemma}\label{covNoiselemmabar}
If $\sigma\sqrt{\Delta}/r \to \infty$, then, for all $t\geq 0$,
\[
    \ex[\bar{f}_n(t), \bar{f}_n(t+\Delta)] \approx \frac {\sum\limits_{j=1}^{k} \exp(-j\Delta)  \frac{ \kappa_1}{j!((k-j)!)^2} (2\pi\Delta)^{-d(j-1)/2}j^{-d/2} (n\sigma^d)^{-j} \left(\frac{\sigma}{r}\right)^d} 
    {\sum\limits_{j=1}^{k}\frac{ \kappa_{j}}{j!((k-j)!)^2 }   (nr^d)^{-j}}
\]
for positive constants $ \kappa_j$, \rev{exact expression of which are given in \eqref{kappakj}}. 

\end{lemma}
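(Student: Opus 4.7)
The plan is to start from temporal stationarity (Remark~\ref{stationspacexihat}) together with Observation~\ref{Propsamedist}, which reduce the claim to computing
\[
\ex[\bar f_n(t)\bar f_n(t+\Delta)] \;=\; \frac{\cov(\wh f_n(0),\wh f_n(\Delta))}{\var(\wh f_n(0))}.
\]
The denominator is the ``static'' variance treated in \cite{onaran2022functional}: applying Lemma~\ref{Palmhigh} with $\ell=2$ and $\sigma=\Delta=0$, the contribution of intersection size $j$ is of order $n^{2k-j}r^{d(2k-j-1)}$, giving $\var(\wh f_n(0))\asymp n^{2k}r^{d(2k-1)}\sum_{j=1}^k \kappa_j(nr^d)^{-j}$ and identifying the constants $\kappa_j$. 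All remaining work is in the covariance.

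For the numerator, I would again invoke Lemma~\ref{Palmhigh} with $\ell=2$, splitting into $k$ contributions according to the intersection size $j=|\mchX_1\cap\mchX_2|$. Memorylessness of the exponential lifetimes yields a joint survival factor $e^{-j\Delta}$ for the $j$ common points, and after fixing the initial position of one of them at the origin by translation invariance,
\[
\cov_j \;\approx\; \frac{n^{2k-j}\,e^{-j\Delta}}{j!((k-j)!)^2}\int \ex_{\mathbf{W}}\!\bigl[\wh\xi_r(0,\mathbf y,\mathbf a)\,\wh\xi_r(W_1,\mathbf y+\mathbf W,\mathbf b)\bigr]\,d\mathbf y\,d\mathbf a\,d\mathbf b,
\]
where $\mathbf y=(y_2,\ldots,y_j)$, $\mathbf a,\mathbf b\in\td^{k-j}$, and $\mathbf W=(W_1,\ldots,W_j)$ are i.i.d.\ $N(0,\sigma^2\Delta I_d)$. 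I would then rescale by $r$: $y_i=r\tilde y_i$, $a_i=r\tilde a_i$, $b_l=W_1+r\tilde b_l$, and $V_i:=W_i-W_1=r\tilde V_i$. By \eqref{transinvxihat}--\eqref{scaleinvxihat}, the integrand collapses to $\xi_1(0,\tilde{\mathbf y},\tilde{\mathbf a})\,\xi_1(0,\tilde{\mathbf y}+\tilde{\mathbf V},\tilde{\mathbf b})$, with all tilde variables confined to a compact window by the localization property of $\xi_1$.

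The key step exploits the fast regime. The joint density of $\mathbf V$ has covariance $\sigma^2\Delta(I_{j-1}+J_{j-1})\otimes I_d$ and thus peak value $(2\pi\sigma^2\Delta)^{-d(j-1)/2}j^{-d/2}$; because $\sigma\sqrt\Delta/r\to\infty$, this density is nearly constant on the $O(r)$ support of $\mathbf V$. Replacing it by its peak and then substituting $\tilde{\mathbf u}=\tilde{\mathbf y}+\tilde{\mathbf V}$ factorizes the integral into two copies of
\[
M_1:=\int \xi_1(0,\tilde{\mathbf y},\tilde{\mathbf a})\,d\tilde{\mathbf y}\,d\tilde{\mathbf a},
\]
which is independent of $j$ since its domain of integration has total dimension $d(k-1)$ regardless of the $(j-1)+(k-j)$ split. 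Collecting the Jacobians $r^{d(j-1)}\cdot r^{d(k-j)}\cdot r^{d(k-j)}\cdot r^{d(j-1)}=r^{d(2k-2)}$ gives
\[
\cov_j \;\approx\; \frac{M_1^2}{j!((k-j)!)^2}\,n^{2k-j}\,e^{-j\Delta}\,r^{d(2k-2)}(2\pi\sigma^2\Delta)^{-d(j-1)/2}j^{-d/2}.
\]
Dividing by $n^{2k}r^{d(2k-1)}$ and rewriting $\sigma^{-d(j-1)}r^{-d}=n^{j}(n\sigma^d)^{-j}(\sigma/r)^d$ produces exactly the stated ratio with $\tilde\kappa_1=M_1^2$.

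The main obstacle will be quantifying the Gaussian approximation uniformly in $j$ and in $\Delta$. The pointwise relative error from replacing the $\mathbf V$-density by its peak on the bounded $\tilde{\mathbf V}$-window is $O\bigl((r/(\sigma\sqrt\Delta))^2\bigr)\to 0$, which should suffice, but one must check that the error stays lower order than each $\cov_j$ so that the $\approx$ survives the finite sum over $j$. A secondary issue is treating $\mathbf W$ as Gaussian on $\Rd$ rather than $\td$; under the ambient hypotheses (in particular $\sigma\sqrt\Delta\to 0$ in the regime where this lemma is applied) the wrap-around contribution is exponentially small and can be absorbed into the lower-order terms.
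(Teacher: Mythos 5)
Your proposal is correct and takes essentially the same route as the paper: both reduce via the Mecke/Palm decomposition over the overlap size $j$ (yielding the $n^{2k-j}e^{-j\Delta}$ factor), and both exploit that in the fast regime the relative-displacement Gaussian density, with covariance $\sigma^2\Delta(\mathrm{I}_{j-1}+\mathrm{J}_{j-1})\otimes \mathrm{I}_d$, is asymptotically flat at its peak value $(2\pi\sigma^2\Delta)^{-d(j-1)/2}j^{-d/2}$ on the $O(r)$ support, so that the correlation integral factorizes into $\tilde\kappa_1=M_1^2$. The two technical points you flag at the end (uniform control of the flat-Gaussian approximation and the torus wrap-around) are precisely what the paper's Lemma~\ref{intermlem} handles, via the truncation radius $R=\sigma\sqrt{\Delta}\log\bigl(1/(\sigma\sqrt{\Delta})\bigr)$ and the wrapped-Gaussian lattice-sum estimate, and your sketched error bounds are the correct ones, so no genuine gap remains.
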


In the next lemma we give the limiting covariance for the processes $\int_0^t \tilde f_n(s)\der s$.
\begin{lemma}\label{covNoiselemma}
Under the assumptions of Theorem~\ref{white noise thm}, for any $t_1,t_2>0$, 
\begin{align*}
    \lim_{n\to\infty} \ex\left[\int_{0}^{t_1} \int_{0}^{t_2} \tilde f_n(s_1)\tilde f_n(s_2) \der s_1 \der s_2\right] =t_1\wedge t_2. 
\end{align*}
where $t_1\wedge t_2$ is the minimum of $t_1$ and $t_2$.

\end{lemma}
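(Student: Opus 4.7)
\emph{Plan.} By the definition $\tilde f_n(s)=\bar f_n(s)/\sqrt{2M_n}$, Fubini's theorem, and the temporal stationarity recorded in Remark~\ref{stationspacexihat}, the expectation in question equals
\[
\frac{1}{2M_n}\int_0^{t_1}\int_0^{t_2}\phi_n(|s_1-s_2|)\der s_1\der s_2,
\]
where $\phi_n(\Delta):=\ex[\bar f_n(0)\bar f_n(\Delta)]$. Assume without loss of generality that $t_1\leq t_2$; the substitution $u=s_2-s_1$ in the inner integral then rewrites this as
\[
\frac{1}{2M_n}\int_0^{t_1}\bigl[\Phi_n(s_1)+\Phi_n(t_2-s_1)\bigr]\der s_1,
\]
where $\Phi_n(a):=\int_0^a\phi_n(u)\der u$, and in particular $M_n=\Phi_n(1)$.

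The proof then reduces to a ``concentration of mass'' statement: for every fixed $s>0$,
\[
\lim_{n\to\infty}\Phi_n(s)/M_n=1,
\]
together with a uniform-in-$n$ upper bound $\Phi_n(s)\leq C\,M_n$ on any bounded range of $s$. Granted these, the integrand $[\Phi_n(s_1)+\Phi_n(t_2-s_1)]/(2M_n)$ is uniformly bounded and converges pointwise to $1$ on $(0,t_1)$, so bounded convergence delivers the asserted limit $t_1=t_1\wedge t_2$.

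To establish the concentration, I would plug in the explicit asymptotic of Lemma~\ref{covNoiselemmabar}. Under the fast-regime hypotheses $n\sigma^d\lesssim 1$, $nr^d\to 0$, and $r/\sigma\to 0$, the summand $j=k$ dominates the expansion, yielding
\[
\phi_n(u)\asymp (r/\sigma)^{d(k-1)}\,u^{-d(k-1)/2}e^{-ku}
\]
in the regime $\sigma\sqrt{u}/r\to\infty$. Integrating term-by-term over a fixed interval $[s,1]$ produces a tail contribution of order $(r/\sigma)^{d(k-1)}$, while the bulk of $M_n$ comes from the crossover scale $u\sim(r/\sigma)^2$, below which the Lemma's asymptotic breaks down and $\phi_n$ saturates at a value of order $1$; a short computation then yields $M_n\asymp(r/\sigma)^2$. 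The hypothesis $d(k-1)\geq 3$ in Theorem~\ref{white noise thm} finally makes $(r/\sigma)^{d(k-1)}=o((r/\sigma)^2)$, so the tail is $o(M_n)$ and $\Phi_n(s)/M_n\to 1$ as required.

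The main obstacle is turning this heuristic into a rigorous estimate. Lemma~\ref{covNoiselemmabar} provides an equivalence only for each fixed $\Delta$ with $\sigma\sqrt{\Delta}/r\to\infty$, whereas the integration above demands uniform control of $\phi_n$ down through the crossover region $\Delta\sim(r/\sigma)^2$ and below. Bridging this gap will require either revisiting the moment computations underlying the proof of Lemma~\ref{covNoiselemmabar} to extract non-asymptotic upper bounds on $\phi_n$, or supplementing the displayed asymptotic with the trivial inequality $|\phi_n(\Delta)|\leq 1$ (a consequence of $\var[\bar f_n(t)]=1$ and Cauchy--Schwarz) on the small-$\Delta$ region, so that the tail/bulk split above can be made quantitatively precise and the bounded-convergence argument sealed.
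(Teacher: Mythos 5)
Your reduction is in substance identical to the paper's: the paper also passes through Fubini (justified there via the moment bound \eqref{exfntbound}), stationarity in time, and exactly your ``concentration of mass'' statement, which it packages as Lemma~\ref{covNoiseprop} (showing $\int_u^1 \ex[\bar f_n(0)\bar f_n(s)]\,\der s = o(M_n)$ for fixed $u>0$) before deducing the double-integral limit using nonnegativity of the covariance. The tail estimate you sketch is also the paper's: Lemma~\ref{covNoiselemmabar} with $nr^d\to0$ and $n\sigma^d\lesssim1$ gives a dominating bound $\ex[\bar f_n(0)\bar f_n(s)]\le \sum_{j=1}^k e^{-js}g(n)$ with $g(n)\lesssim(r/\sigma)^{d(k-1)}$ uniformly over $s\ge u$, and a reverse-Fatou argument supplies precisely the uniformity in the lag that you correctly flag as an obstacle.

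The genuine gap is in how you propose to handle $M_n$. First, $M_n\asymp(r/\sigma)^2$ is not available: the paper states explicitly that exact asymptotics of $M_n$ would require the covariance in the transition regime, which it declares out of scope, and Proposition~\ref{propintasymp} establishes only $(r/\sigma)^{2+\epsilon}\lesssim M_n\lesssim(r/\sigma)^{2-4/(d(k-1)+2)}$. Second, and more seriously, your suggested patch---supplementing the fast-regime asymptotic with the trivial bound $|\ex[\bar f_n(0)\bar f_n(\Delta)]|\le1$ on the small-$\Delta$ region---bounds the bulk of $M_n$ from \emph{above}, which is the wrong direction: to conclude that the tail of order $(r/\sigma)^{d(k-1)}$ is $o(M_n)$ you need a \emph{lower} bound on $M_n$, and neither Lemma~\ref{covNoiselemmabar} nor the trivial bound provides one. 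The missing ingredient is the slow-regime Lemma~\ref{covOUlemma}: for lags $u\le(r/\sigma)^{2+\epsilon}$ one has $\sigma\sqrt{u}/r\to0$, so the covariance stays bounded below by a constant $c>0$ there, whence $M_n\gtrsim(r/\sigma)^{2+\epsilon}$; taking, say, $\epsilon=1/2$ and using $d(k-1)\ge3$ gives $(r/\sigma)^{d(k-1)}/M_n\lesssim(r/\sigma)^{1/2}\to0$, which is exactly how the paper (via Proposition~\ref{propintasymp} and \eqref{intoverint}) closes the argument. Without this lower bound your bounded-convergence step cannot be completed, so the proposal as written is incomplete at its decisive point, even though its architecture matches the paper's.
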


Before we present the proofs of Lemmas~\ref{covOUlemma}--\ref{covNoiselemmabar} \rev{in Sections~\ref{slowcovsubsec} and~\ref{modfastcovsubsec}} we  
calculate the first and second moments of $\wh f_n(t)$, which is related to $\bar f_n(t)$ and $\tilde f_n(t)$ through Proposition~\ref{Propsamedist}. The proof of Lemma~\ref{covNoiselemma} will be presented later \rev{in Section~\ref{modfastcovsubsec}} as it requires more information regarding the normalization $M_n$ appearing in the definition of $\tilde f_n(t)$.

\subsection{Regime independent expressions for the moments}

Here we derive expressions for the first and the second moments of $\wh f_n(t)$ which will serve as a starting point for the proofs of Lemmas~\ref{covOUlemma}-\ref{covNoiselemmabar} and introduce important notation. We will not make any assumptions on the asymptotic relationships between $n$, $r$, and $\Delta$ yet, except that $r \to  0$. Therefore the expressions found for the moments will be valid for all regimes.

Note that, due to stationarity in time (Remark~\ref{stationspacexihat}), and Mecke's formula,
\begin{align*}
 \ex[\wh f_{n}(t)] = \ex[\wh f_{n}(0)] 
 = \frac{n^{k}}{k!}\alpha(r),    
\end{align*}
 where we use $\alpha(r)$ as a shorthand notation for
\[
    \alpha(r)\coloneqq \ex[\wh \xi_r(\bs X)],
\]
  and $\bs X\coloneqq (X_1,\ldots,X_k)$ is a $k$-tuple of iid points uniformly distributed in the cube $Q = [-1/2,1/2)^d\subset\Rd$. Furthermore, we have
\begin{align*}
    \ex[\wh f_{n}(t)^2] &= \ex[\wh f_{n}(0)^2]=  \ex\Bigg[ \sum_{j=0}^{k}\sum_{\substack{\mchX_1, \wh{\mcX}_2  \subseteq \wh{\eta}_{n,T}\\ |\mchX_1 \cap \wh\mcX_2| = j}} \wh\xi_r(\wh\mcX_1(0)) \wh\xi_r(\mchX_2 (0) ) \tau_0(\mchX_1)\tau_0(\mchX_2)\Bigg].
\end{align*}
Using Mecke's formula, and the independent distributions of the marks, we obtain
\[
    \ex[\wh f_{n}(t)^2]= \sum_{j=0}^{k} \frac{[n(1+T)]^{2k-j}}{j!((k-j)!)^2} \alpha_j(r)\frac{1}{(1+T)^{2k-j}}, 
\]
 where
\[
    \alpha_j(r)\coloneqq \ex\left[\wh\xi_r(\bs X)\wh\xi_r (\bs X' )\right],
\]
and where $\bs X\coloneqq (X_1,\ldots,X_k)$  and $\bs X'\coloneqq (X_{1},\ldots, X_j,X_{k+1},\ldots, X_{2k-j})$ are both $k$-tuples of uniform iid points 
in $Q$, sharing $j$ points in common.
Therefore, we obtain
\begin{align}
\var[\wh f_{n}(t)] =  \ex[\wh f_{n}(t)^2] - \frac{n^{2k}}{(k!)^2}[\alpha(r)]^2= \sum\limits_{j=1}^{k}\frac{n^{2k-j}}{j!((k-j)!)^2}    \alpha_j(r), \label{varfntasymp}
\end{align}
for all $t>0$. Furthermore,
\[
    \ex[\wh f_{n}(0)\wh f_{n}(\Delta)] 
    =  \ex\Bigg[\sum_{\mchX_1, \wh{\mcX}_2  \subseteq \wh{\eta}_{n,T}} \wh\xi_r(\wh\mcX_1(0)) \wh\xi_r(\mchX_2 (\Delta) ) \tau_0(\mchX_1)\tau_\Delta(\mchX_2)\Bigg].
\]
Counting through the intersection $|\wh\mcX_1 \cap \wh\mcX_2| = j$ as in the calculation of the variance, and using Mecke's formula, along with the independence of the marks,
\begin{align}
    \ex[\wh f_{n}(0)\wh f_{n}(\Delta)] &=\ex\Bigg[\sum^k_{j=0}  \sum_{\substack{\mchX_1, \wh{\mcX}_2  \subseteq \wh{\eta}_{n,T}\\ |\mchX_1 \cap \wh\mcX_2| = j}} \wh\xi_r(\wh\mcX_1(0)) \wh\xi_r(\mchX_2 (\Delta) ) \tau_0(\mchX_1)\tau_\Delta(\mchX_2) \Bigg]\nonumber\\
    &=  \sum_{j=0}^{k}\frac{[n(1+ T)]^{2k-j}}{j!\big((k-j)!\big)^2}  p_0^{k-j} p_\Delta^{k-j} p_{0,\Delta}^j \ex\left[\wh\xi_r(\bs X) \wh\xi_r\left(\bs X' + \bs{Z}_{\sigma^2\Delta}\right)\right],\label{sumj0k}
\end{align}
where  $\bs{Z}_{\sigma^2\Delta}$ is made up of $k$ independent, 
$d$-dimensional,  Gaussian vectors, representing the displacements of the points in $\bs X'$ between times $0$ and $\Delta$. Thus each vector has zero mean and independent entries with variance $\sigma^2\Delta$. \rev{As a side note, we remark that \eqref{sumj0k} could have been obtained as a consequence of Lemma~3.5 of \cite{reitzner2013}. However, direct computation through Mecke's formula is straightforward and useful for introducing notation that we will need later.} Continuing from \eqref{sumj0k},
\begin{equation*}
\begin{split}
    &p_0 \coloneqq \pr[B_x=0] = \frac{1}{1+ T},   \\
    &p_\Delta \coloneqq \pr[B_x\le \Delta < B_x+L_x] = \frac{1}{1+ T}\exp(-\Delta) + \frac{ T}{1+ T} \int_0^\Delta \frac{1}{T} \exp(-(\Delta-b))db = \frac{1}{1+ T}, \\
    &p_{0,\Delta} \coloneqq \pr[B_x=0, \Delta < L_x] = \frac{\exp(-\Delta)}{1+ T}.
\end{split}
\end{equation*}
Inserting above expressions to \eqref{sumj0k}, we get
\begin{align*}
    \ex[\wh f_{n}(0)\wh f_{n}(\Delta)]
    &=  \sum_{j=0}^{k}\frac{n^{2k-j} \exp(-j\Delta)}{j!\big((k-j)!\big)^2}  \ex\left[\wh \xi_r(\bs X) \wh\xi_r\left(\bs X' + \bs Z_{\sigma^2\Delta}\right)\right].
\end{align*}
Note that, for $j=0$, 
\[\ex\left[\wh \xi_r(\bs X) \wh\xi_r\left(\bs X' + \bs Z_{\sigma^2\Delta}\right)\right] = \ex\left[\wh \xi_r(\bs X)\right] \ex\left[\wh\xi_r\left(\bs X' + \bs Z_{\sigma^2\Delta}\right)\right].\]
Furthermore, 
\begin{align*}
    \ex\left[\wh \xi_r(\bs X)\right] =\ex\left[\wh\xi_r\left(\bs X' + \bs Z_{\sigma^2\Delta}\right)\right] = \alpha(r),
\end{align*}
due to the spatial homogeneity (Remark~\ref{stationspacexihat}).  
Thus, the $j=0$ term in the covariance cancels out to give
\begin{align*}
    \cov[\wh f_n(0), \wh f_n(\Delta)] &= \ex[\wh f_{n}(0)\wh f_{n}(\Delta)] - (\ex[\wh f_{n}(0)])^2 \\
    & = \sum_{j=1}^{k}\frac{n^{2k-j} \exp(-j\Delta)}{j!\big((k-j)!\big)^2}   \ex\left[\wh \xi_r(\bs X) \wh\xi_r\left(\bs X' + \bs Z_{\sigma^2\Delta}\right)\right].
\end{align*}
Note that due to Proposition~\ref{Propsamedist},
\begin{align*}
  \ex[\bar f_n(0) \bar f_n(\Delta)] = \frac{\cov[\wh f_n(0), \wh f_n(\Delta)]}{\var[\wh f_{n}(0)]}
    & = \frac {\sum\limits_{j=1}^{k}\frac{ \exp(-j\Delta)}{j!((k-j)!)^2}   n^{-j}\theta_j(r)} 
    {\sum\limits_{j=1}^{k}\frac{1}{j!((k-j)!)^2}   n^{-j} \alpha_j(r)},
\end{align*}
where
\[\theta_j(r)\coloneqq \ex\left[\wh \xi_r(\bs X) \wh\xi_r\left(\bs X' + \bs Z_{\sigma^2\Delta}\right)\right].\]
\rev{Note that, although suppressed in the notation, $\theta_j(r)$ depends on $\Delta$, $\sigma$, and naturally, $\theta_j(r)=\alpha_j(r)$ when $\Delta=0$.}
Clearly, the  $n\to\infty$ limit of the covariance depends on the asymptotic behavior of $\alpha_j(r)$ and $\theta_j(r)$. First, consider $\alpha_j(r)$. Note that
\[
 \alpha_j(r) = 
  \int_{Q^{2k-j}}  \wh \xi_r( x_1, \ldots,  x_k) 
   \wh \xi_r( x_1, \ldots, x_j,  x_{k+1},\ldots,  x_{2k-j}) \prod_{i=1}^{2k-j} \der x_i. 
\]
Making the change of variables $x_i \to x_1+y_i$ for $2\leq i \leq 2k-j$, and using the translation invariance of $\wh\xi_r$, Remark~\ref{transinvxihat}, we get
\[
    \alpha_j(r) = \int_{Q} \der x_1
  \int_{(Q-x_1)^{2k-j-1}}  \wh\xi_r(0, \bs y) 
   \wh\xi_r(0,\bs y') \prod_{i=2}^{2k-j} \der y_i, 
\]
where $\bs y\coloneqq (y_2,\ldots ,y_k)$ and $\bs y'\coloneqq (y_2,\ldots,y_j,y_{k+1},\ldots,y_{2k-j})$. 
Note that $\wh\xi_r(0, \bs y)$ is a periodic functional of $\bs y$ with period 1 due to its definition \eqref{whxidef}. Thus, we obtain
\[
    \alpha_j(r) = 
  \int_{Q^{2k-j-1}}  \wh\xi_r(0, \bs y) 
   \wh\xi_r(0,\bs y') \prod_{i=2}^{2k-j} \der y_i.
\]
Due to Remark~\ref{localxihat}, for $r$ small enough, namely $r<\frac{1}{2\delta}$,
\[
    \alpha_j(r) = \int_{ B_{\delta r}^{2k-j-1}}  \wh\xi_r(0, \bs y) 
   \wh \xi_r(0, \bs y') \prod_{i=2}^{2k-j} \der y_i,
\]
where we use $B_{\delta r}$ as a shorthand notation for $B_{\delta r}(0)$. 
 Taking the change of variables $\bs y\to r\bs y$ and $\bs y'\to r \bs y'$,
 \[
    \alpha_j(r) = r^{d(2k-j-1)}\int_{ B_{\delta}^{2k-j-1}}  \wh\xi_r(0,  r \bs y) 
   \wh \xi_r(0,  r \bs y') \prod_{i=2}^{2k-j} \der y_i.
\]
 Using Remark~\ref{transinvxihat}, that is the scale invariance of $\wh\xi_r$, for $\bs y, \bs y'\subset B_{\delta}\subset Q$, we conclude that
\begin{align} \label{alphaasymp}
    \alpha_j(r) = r^{d(2k-j-1)} \kappa_{j},
    \end{align}
where
\begin{align}
     \kappa_{j} \coloneqq \int_{B_{\delta}^{2k-j-1}}  \wh\xi_1(0, \bs y) 
   \wh\xi_1(0, \bs y') \prod_{i=2}^{2k-j} \der y_i >0, \label{kappakj}
\end{align} 
which is due to Remark~\ref{feasxihat}. 

Let us now focus on the asymptotics of $\theta_j(r)$.
Using the spatial homogeneity (Remark~\ref{stationspacexihat}),
\begin{align}
 \theta_j(r) = 
  \int_{(\Rd)^j} \der \bs x' \int_{Q^{2k-j}} \hspace{-1em}\der \bs x \der \bs x'' \hspace{1em} \wh \xi_r( \bs x)  \wh \xi_r( \bs x', \bs x'')  \prod_{i=1}^{j} \varphi(x'_i- x_i)  ,
\label{exxi}
\end{align}
where $\bs x\coloneqq (x_1,\ldots ,x_k)$, $\bs x'\coloneqq (x'_1,\ldots ,x'_j)$, $\bs x''\coloneqq (x''_1,\ldots,x''_{k-j})$, and $\varphi:\Rd\to\mathbb{R}^+$ is the symmetric Gaussian density,
\[
\varphi( z)\coloneqq  \frac{1}{(2\pi\Delta)^{d/2}\sigma^d} \exp\Big(-\frac{\|z\|^2}{2\sigma^2\Delta}\Big). 
\]
 Now, we make the following change of variables in \eqref{exxi},
\begin{alignat*}{5}
    x_1&\to u' &&\text{and}\quad
    x'_{1}\to u'+ u, &&\\
    x_i &\to u'+z_{i-1}  \quad&&\textrm{and} \quad x'_i \to u'+u+z'_{i-1}  &&\textrm{for} \quad 2\leq i\leq j,\\
    x_\ell &\to u'+ y_{\ell-j} &&\textrm{and}\quad x''_{\ell-j} \to u'+ u + y'_{\ell-j}  \quad&&\textrm{for} \quad j+1\leq \ell\leq k,
\end{alignat*}
  and, using the translation invariance of $\wh \xi_r$ we get,
\begin{align}\label{exbigxi}
\begin{split}
 \theta_j(r) = 
  \int_{Q} \der u'\int_{\Rd -u'} \hspace{-1em}\der u   &\int_{(\Rd-u-u')^{j-1}} \hspace{-1em}\der \bs z' \int_{(Q-u-u')^{k-j}} \hspace{-1em} \der \bs y' \int_{(Q-u')^{k-1}} \hspace{-1em}\der \bs y \der \bs z \\&\times\wh\xi_r(0,  \bs z, \bs y)
  \wh\xi_r(0, \bs z', \bs y') 
 \varphi(u) \prod_{i=1}^{j-1} \varphi(z'_i-z_i+u )     ,  
 \end{split}
\end{align}
where 
\begin{alignat*}{3}
    \bs y&\coloneqq (y_1,\ldots, y_{k-j}),\quad&&
    \bs z\coloneqq (z_1,\ldots, z_{j-1}),\\
    \bs y'&\coloneqq (y'_{1},\ldots, y'_{k-j}),&&
    \bs z'\coloneqq (z'_1,\ldots, z'_{j-1}).
\end{alignat*}
Due to the periodicity of $\wh \xi_r$, and using the shorthand notation
\begin{align*}
    \bs \varphi(\bs z,u) &\coloneqq \varphi(u) \prod_{i=1}^{j-1} \varphi(z_i+u), 
\end{align*}
we rewrite \eqref{exbigxi} as follows:
\begin{align}\label{exlast}
 \theta_j(r) = 
   &\int_{(\Rd)^{j}} \hspace{-1em} \der u \der \bs z'\int_{Q^{2k-j-1}}\hspace{-1em}\der \bs y \der \bs z \der \bs y' \hspace{0.5em} \wh\xi_r(0,  \bs z, \bs y)
  \wh\xi_r(0, \bs z', \bs y') 
  \bs \varphi(\bs z'-\bs z,u).       
\end{align}

\rev{We conclude by putting together what we have achieved in this section, i.e., a simplified expression for the covariance,
\begin{align}\label{sec42conc}
    \ex[\bar f_n(0) \bar f_n(\Delta)] 
     = \frac {\sum\limits_{j=1}^{k}\frac{ \exp(-j\Delta)}{j!((k-j)!)^2}   n^{-j}\theta_j(r)} 
    {\sum\limits_{j=1}^{k} \frac{ \kappa_{j}}{j!((k-j)!)^2}  n^{-j} r^{d(2k-j-1)}},
\end{align}}
We will derive explicit formulae for $\theta_j(r)$ in the different regimes in the proofs that follow. 

\subsection{Covariance for the slow regime}\label{slowcovsubsec} We prove Lemma~\ref{covOUlemma} here using the notation and identities established in the previous section.  
\efe{The main idea behind the proof will be to separate the domain of the first integral in \eqref{exlast} into a small ball of fixed radius and its complement. It will be possible to exactly calculate the integral inside the ball via a change of variables and exploiting the scale invariance of $\xi_r$, and, using a Gaussian tail bound, we  will show that the complementary integral  is asymptotically negligible.}  

\begin{proof}[Proof of Lemma~\ref{covOUlemma}]

When $\sigma\sqrt{\Delta}/r \to 0$, we make the following change of variables in \eqref{exlast},
\[
    u \to \sigma\sqrt{\Delta} u 
\]
followed by
\[
    z'_i\to \sigma \sqrt{\Delta} v_i - \sigma\sqrt{\Delta}u + z_i \quad\text{for}\quad 1\leq i\leq j-1,
\]
to obtain
\begin{align}\label{star12}
 \begin{split}
 \theta_j(r) = \frac{1
 }{(2\pi)^{dj/2}}
  \int_{(\Rd)^{j}}\hspace{-1em}\der u \der \bs v    \int_{Q^{2k-j-1}}\hspace{-1em}\der \bs y \der \bs z \der \bs y' \hspace{0.5em}&\wh\xi_r(0,  \bs z,\bs y)
  \wh\xi_r\left(0,  \sigma \sqrt{\Delta} (\bs 
  v -u) +\bs z, \bs y'\right) 
   \\
   &\times\exp\left( - \|\bs v\|^2/2\right) \exp\left( - \|u\|^2/2\right),   
 \end{split}
\end{align}
where $\bs v\coloneqq (v_1,\ldots, v_{j-1})$, and
\begin{align}
    \exp\left( - \|\bs v\|^2/2\right) \coloneqq \exp\left( - \frac{1}{2}\sum_{i=1}^{j-1} \| v_i\|^2\right). 
\end{align}
Due to the locality of $\wh\xi_r$, the integration over $Q$ in the second integral in \eqref{star12} can be restricted to $B_{\delta r}$, and, with the change of variables 
$\bs y\to r\bs y,
\bs y'\to r\bs y', \bs z\to r\bs z$, we can write
\begin{align}\label{star140}
 \begin{split}  
 \theta_j(r)  = &\frac{r^{d(2k-j-1)}
 }{(2\pi)^{dj/2}}
  \int_{(\Rd)^{j}}\hspace{-1em}\der u \der \bs v    \int_{B_\delta^{2k-j-1}}\der \bs y \der \bs z \der \bs y' \hspace{0.5em} \wh\xi_r(0,  r\bs z,r\bs y)\\
  &\times\wh\xi_r\left(0,  \sigma \sqrt{\Delta} (\bs v -u) +r\bs z, r\bs y' \right) 
   \exp\left( - \|\bs v\|^2/2\right) \exp\left( - \|u\|^2/2\right).
 \end{split}
\end{align}
Note that the integrand in \eqref{star140} is  bounded above by
\[
    \|\xi\|_{\infty}^2 \ind\{\bs y,\bs y', \bs z\in B_{\delta}\}  \exp\left( - \|\bs v\|^2/2\right) \exp\left( - \|u\|^2/2\right),
\]
which is integrable over $(\Rd)^{j}\times B_{\delta }^{2k-j-1} $. Using the Dominated Convergence Theorem (DCT), this is enough to conclude that $\theta_j(r)\asymp r^{d(2k-j-1)}$. However, in order to find the exact constant to which $\theta_j(r)/ r^{d(2k-j-1)}$ converges, we need to use the scale invariance of $\wh\xi_r$ \eqref{scaleinvxihat}. As \eqref{scaleinvxihat} is valid only in $Q$, we separate the integral in \eqref{star140} into two parts, as follows: 
\begin{align}\label{bdelta3}
 \begin{split}  
  &\int_{B_{(r/\sigma)^{1/2}}^{j}}  \hspace{-1em}\der u \der \bs v    \int_{B_\delta^{2k-j-1}}\hspace{-1em}\der \bs y \der \bs z \der \bs y' \hspace{0.5em} \wh\xi_r(0,  r\bs z,r\bs y)
  \wh\xi_r\left(0,  \sigma \sqrt{\Delta} (\bs v -u) +r\bs z, r\bs y' \right)e^{-\frac{\|\bs v\|^2 + \|u\|^2}{2}} 
   \\
     +&\int_{(\Rd)^j \setminus B_{(r/\sigma)^{1/2}}^{j}}  \hspace{-2em}\der u \der \bs v    \int_{B_\delta^{2k-j-1}}\hspace{-1em}\der \bs y \der \bs z \der \bs y' \hspace{0.5em} \wh\xi_r(0,  r\bs z,r\bs y)
  \wh\xi_r\left(0,  \sigma \sqrt{\Delta} (\bs v -u) +r\bs z, r\bs y' \right) e^{-\frac{\|\bs v\|^2 + \|u\|^2}{2}}.
 \end{split}
\end{align}
For the first integral in \eqref{bdelta3} we note that for all $\bs v \in B_{(r/\sigma)^{1/2}}^{j-1}$, $u \in B_{(r/\sigma)^{1/2}}$, $\bs z
\in B_{\delta }^{2k-j-1}$, all the points that compose $\bs z, \bs y, \bs y',$ and $\sigma \sqrt{\Delta}/r (\bs v -u) +\bs z$ are inside $Q$.
Therefore, using the scale invariance of $\wh \xi_r$, we obtain that the first term in \eqref{bdelta3} is equal to
\begin{align*}
 \begin{split}  
  \int_{B_{(r/\sigma)^{1/2}}^{j}}  \hspace{-1em}\der u \der \bs v    \int_{B_\delta^{2k-j-1}}\hspace{-1em}\der \bs y \der \bs z \der \bs y' \hspace{0.5em} \wh\xi_1(0,  \bs z,\bs y)
  \wh\xi_1\left(0,   \sigma \sqrt{\Delta}/r (\bs v -u) +\bs z, \bs y'\right) e^{-\frac{\|\bs v\|^2 + \|u\|^2}{2}}. 
 \end{split}
\end{align*}   
Using DCT for the expression above and since $\sigma\sqrt{\Delta}/r \to 0$, we conclude that 
\begin{align}\nonumber
\begin{alignedat}{3}
  &\lim_{n\to\infty}\int_{B_{(r/\sigma)^{1/2}}^{j}}  \hspace{-1em}\der u \der \bs v    \int_{B_\delta^{2k-j-1}}\hspace{-1em}\der \bs y \der \bs z \der \bs y' \hspace{0.5em} &&\mathrlap{\wh\xi_r(0,  r\bs z,r\bs y)
  \wh\xi_r\left(0,  \sigma \sqrt{\Delta} (\bs v -u) +r\bs z, r\bs y' \right) e^{-\frac{\|\bs v\|^2 + \|u\|^2}{2}}}\hphantom{\wh\xi_r(0,  r\bs z,r\bs y)
  \wh\xi_r\left(0,  \sigma \sqrt{\Delta} (\bs v -u) +r\bs z, r\bs y' \right) e^{-\frac{\|\bs v\|^2 + \|u\|^2}{2}}}\\
   &=\lim_{n\to\infty} \int_{(\Rd)^{j}}  \hspace{-1em}\der u \der \bs v    \int_{B_\delta^{2k-j-1}}\hspace{-1em}\der \bs y \der \bs z \der \bs y' \hspace{0.5em}  &&\mathrlap{\wh\xi_1(0,  \bs z,\bs y)
  \lim_{n\to\infty}  \wh\xi_1\left(0,   \sigma \sqrt{\Delta}/r (\bs v -u) +\bs z, \bs y'\right)} \\&&&\times\ind\{\bs v\in B_{(r/\sigma)^{1/2}}^{j}\}
   \ind\{ u\in B_{(r/\sigma)^{1/2}}\} e^{-\frac{\|\bs v\|^2 + \|u\|^2}{2}} \\
   &=\int_{(\Rd)^{j}}  \hspace{-1em}\der u \der \bs v    \int_{B_\delta^{2k-j-1}}\hspace{-1em}\der \bs y \der \bs z \der \bs y' \hspace{0.5em} \mathrlap{  \wh\xi_1(0,  \bs z,\bs y)
  \wh\xi_1\left(0,  \bs z, \bs y'\right)  e^{-\frac{\|\bs v\|^2 + \|u\|^2}{2}}}
  \\ &=  \kappa_j(2\pi)^{dj/2}, 
  \end{alignedat}
\end{align}
with $\kappa_j$ as defined in \eqref{kappakj}. Note that in the last step above we used Remark~\ref{aecontremark}.  
Now, we observe that the second term of \eqref{bdelta3} can be bounded above by   
\begin{align*}
    \int_{(\Rd)^j \setminus B_{(r/\sigma)^{1/2}}^{j}}  \hspace{-1em}\der u \der \bs v    \int_{B_\delta^{2k-j-1}}\hspace{-1em}\der \bs y \der \bs z \der \bs y' \hspace{0.5em}  \|\xi\|_{\infty}^2
   e^{-\frac{\|\bs v\|^2 + \|u\|^2}{2}}   \leq K \sqrt{\frac{\sigma}{r}} e^{-\frac{r}{\sigma d}} \to 0,
\end{align*}
for some constant $K>0$, using the classical Mills inequality on the tail of the Gaussian density \cite{mills}. With these, we conclude that $\theta_j(r) r^{-d(2k-j-1)} \approx 
   \kappa_j$, and we obtain, \rev{from \eqref{sec42conc},} that
\begin{align*} 
    \ex[\bar{f}_n(0)& \bar{f}_n(\Delta)]\\ 
     &\approx \frac {\sum\limits_{j=1}^{k} \frac{ \kappa_{j}}{j!((k-j)!)^2 } \exp(-j\Delta)    n^{-j} r^{d(2k-j-1)}} 
    {\sum\limits_{j=1}^{k} \frac{ \kappa_{j}}{j!((k-j)!)^2 }   n^{-j} r^{d(2k-j-1)}}
   \approx \begin{cases}\exp(-k\Delta) \quad&  nr^d \to 0,\\
    \sum_{j=1}^{k} \lambda_j \exp(-j\Delta) \quad&  nr^d \to \gamma>0,\\
    \exp(-\Delta)\quad&  nr^d \to \infty,
    \end{cases}
\end{align*}
where 
\rev{
\begin{alignat}{1}\label{lambdaexact}
   \lambda_j\coloneqq \frac { \frac{ \kappa_{j}}{j!((k-j)!)^2 }  \gamma^{-j}} 
    {\sum\limits_{\ell=1}^{k} \frac{ \kappa_{\ell}}{j!((k-\ell)!)^2 } \gamma^{-\ell}} = \frac {  \frac{\int_{B_{\delta}^{2k-j-1}}  \wh\xi_1(0, \bs y) 
   \wh\xi_1(0, \bs y') \prod_{i=2}^{2k-j} \der y_i}{j!((k-j)!)^2 }  \gamma^{-j}} 
    {\sum\limits_{\ell=1}^{k}  \frac{\int_{B_{\delta}^{2k-\ell-1}}  \wh\xi_1(0, \bs w) 
   \wh\xi_1(0, \bs w') \prod_{i=2}^{2k-\ell} \der w_i}{\ell!((k-\ell)!)^2 } \gamma^{-\ell}}, 
\end{alignat}
with $\bs w\coloneqq (w_2,\ldots ,w_k)$ and $\bs w'\coloneqq (w_2,\ldots,w_\ell,w_{k+1},\ldots,w_{2k-\ell}).$}
This concludes the proof of Lemma~\ref{covOUlemma}. 
\end{proof}

\subsection{Covariance for the moderate and fast regimes}\label{modfastcovsubsec}
We first give a lemma for the asymptotics of $\theta_j(r)$ that is valid in both regimes, which will  eventually make the proofs of Lemmas~\ref{covModlemma} and \ref{covNoiselemmabar} somewhat more concise. \efe{The basic idea behind the proof of these lemmas will be the same as that of Lemma~\ref{covOUlemma}, but here we will need to take care of the periodicity issues after rescaling, since we work on flat torus.} 

\begin{lemma}\label{intermlem}
    Assume $1\lesssim \sigma\sqrt{\Delta}/r$, and take some $R\to 0$ as $n\to\infty$. Then $\theta_j(r)=\vartheta_j(r)+\vartheta_j'(r)$, where
    \begin{align}\label{star1}
 \begin{split}
 \vartheta_j(r) \approx &\frac{r^{d(2k-1)}}{(\sqrt{2\pi} \sigma)^{dj}\Delta^{dj/2}}
  \int_{B_{\delta}^{2k-2}} \hspace{-0.5em}\der \bs y \der \bs z \der \bs y'  \der \bs z' \hspace{0.5em} \wh\xi_1(0, \bs z, \bs y) \wh\xi_1(0, \bs z', \bs y') \\
  \times&\int_{B_{2\delta +R/r}} \exp\left( -  \frac{r^2}{2\Delta\sigma^2} \left(\|u\|^2 + \sum_{i=1}^{j-1}  \|z'_{i}+u-z_i\|^2\right)\right)  \der u,
 \end{split}
\end{align}
and
\begin{align}\label{star2}
    \vartheta_j'(r) \lesssim r^{d(2k-2)} \sigma^{-dj} \Delta^{-dj/2} \exp\left( -  \frac{jR^2 }{2\Delta\sigma^2}\right).
\end{align}
\end{lemma}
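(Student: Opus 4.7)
The plan is to start from the expression \eqref{exlast} and to prepare it via the same two-step mechanism that was used in the slow-regime proof: (i) use the locality property to restrict the spatial integration variables $\bs y, \bs z, \bs y', \bs z'$ to the ball $B_{\delta r}$, which is legitimate for $r$ small enough so that $B_{\delta r}\subset Q$, and then (ii) rescale $\bs y \to r\bs y$, $\bs z \to r\bs z$, $\bs y' \to r\bs y'$, $\bs z' \to r\bs z'$, applying the scale invariance $\wh\xi_r(0, r\bs y) = \wh\xi_1(0, \bs y)$ to convert both $\wh\xi_r$ factors into $\wh\xi_1$. The Jacobian of these rescalings contributes $r^{d(2k-2)}$. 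A further change of variables $u \to ru$ contributes an additional $r^d$ and turns the Gaussian kernel $\bs\varphi(\bs z'-\bs z,u)$ into $(2\pi\Delta)^{-dj/2}\sigma^{-dj}\exp\!\bigl(-\tfrac{r^2}{2\sigma^2\Delta}(\|u\|^2+\sum_i \|z'_i+u-z_i\|^2)\bigr)$. The result is an integral representation of $\theta_j(r)$ of exactly the form in \eqref{star1}, with the $u$-integration running over all of $\mathbb{R}^d$ rather than $B_{2\delta+R/r}$.

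I would then define $\vartheta_j(r)$ to be that integral restricted to $u \in B_{2\delta + R/r}$, and $\vartheta_j'(r)$ to be the integral over the complement $\{u : \|u\| > 2\delta + R/r\}$; by construction $\theta_j(r) = \vartheta_j(r) + \vartheta_j'(r)$ and the first piece matches \eqref{star1}. For the tail piece, the key geometric observation is that when $\|u\| > 2\delta + R/r$ and $\bs z, \bs z' \in B_\delta^{j-1}$, the reverse triangle inequality gives $\|z'_i + u - z_i\| \geq \|u\| - 2\delta > R/r$ for every $i$, and of course $\|u\| > R/r$ as well. Hence all $j$ squared norms in the Gaussian exponent exceed $R^2/r^2$, so their sum exceeds $jR^2/r^2$, producing the targeted decay factor $\exp(-jR^2/(2\sigma^2\Delta))$.

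The delicate point, and what I expect to be the main obstacle, is that pulling out this full exponential factor naively leaves behind an integrand with no decay in $u$, so one cannot simply integrate over the unbounded complement. The remedy is to split the exponent as
\[
\|u\|^2+\sum_{i=1}^{j-1}\|z'_i+u-z_i\|^2 \;\geq\; (1-\epsilon)\,\frac{jR^2}{r^2} \;+\; \epsilon\,\|u\|^2,
\]
which retains a Gaussian in $u$ with variance proportional to $\sigma^2\Delta/(\epsilon r^2)$. Using $\wh\xi_1 \le \|\xi\|_\infty$, the boundedness of $\bs z'$ in $B_\delta^{j-1}$, and evaluating the resulting Gaussian integral over $u\in\mathbb{R}^d$ yields a factor of order $\sigma^d\Delta^{d/2}r^{-d}$. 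Multiplying by the prefactor $r^{d(2k-1)}(2\pi\Delta)^{-dj/2}\sigma^{-dj}$ produces a bound of order $r^{d(2k-2)}\sigma^{-d(j-1)}\Delta^{-d(j-1)/2}\exp(-(1-\epsilon)jR^2/(2\sigma^2\Delta))$. Since $\sigma\sqrt{\Delta}\lesssim 1$ asymptotically, $\sigma^{-d(j-1)}\Delta^{-d(j-1)/2} \leq \sigma^{-dj}\Delta^{-dj/2}$, and taking $\epsilon\downarrow 0$ (or absorbing the $(1-\epsilon)$ factor into the $R$ parameter, which is free to be chosen slightly larger) gives the bound \eqref{star2}. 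The remaining step is to verify that this decomposition and the corresponding estimates are uniform in the parameters, which follows from the assumed scaling $\sigma\sqrt{\Delta}/r \gtrsim 1$ and from Assumption~\ref{boundedass}.
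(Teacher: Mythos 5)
Your overall architecture (localize, rescale by $r$, split the $u$-integral at radius $2\delta r + R$) matches the paper's, and your tail estimate via the exponent split $\|u\|^2+\sum_i\|z_i'+u-z_i\|^2 \ge (1-\epsilon)jR^2/r^2 + \epsilon\|u\|^2$ is a workable substitute for the paper's device, which instead wraps $u$ onto the bounded domain $Q$ so that a sup-bound of the integrand suffices. But there is a genuine gap at your very first step: in \eqref{exlast} the variables $\bs z'$ range over $(\Rd)^{j-1}$, and $\wh\xi_r(0,\bs z',\bs y')$ is \emph{periodic} in $\bs z'$ with period $1$, by \eqref{whxidef}. Localization therefore does not confine $\bs z'$ to $B_{\delta r}$; it only confines each $z_i'$ to the lattice union $\bigcup_{\varrho\in\zd}B_{\delta r}(\varrho)$. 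By discarding all shifted copies with $\varrho\neq 0$ you lose part of $\theta_j(r)$, so your claimed identity $\theta_j(r)=\vartheta_j(r)+\vartheta_j'(r)$ ``by construction'' is false as stated. The paper devotes roughly half of its proof to exactly this point: it folds the lattice sum into the wrapped Gaussian $\tilde\varphi(u)=\sum_{\nu\in\zd}\varphi(u+\nu)$ and proves, via the series estimates culminating in \eqref{psigma2}, that $\tilde\varphi(u)\approx\varphi(u)$ for $\|u\|\le \frac{1}{4d}$ as $\sigma^2\Delta\to 0$; this wrap-around analysis is also the source of the ``$\approx$'' (rather than ``$=$'') in \eqref{star1}. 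Your route can be repaired: in each omitted lattice term either a factor $\varphi(z_i'+\varrho_i+u-z_i)$ or the factor $\varphi(u)$ is evaluated at distance bounded below by a positive constant times $\|\varrho\|$, and summing over $\varrho\neq 0$ yields a contribution of order $r^{d(2k-2)}\sigma^{-dj}\Delta^{-dj/2}e^{-c/(\sigma^2\Delta)}$, which can be absorbed into $\vartheta_j'(r)$ because $R\to 0$ makes $e^{-c/(\sigma^2\Delta)}\le e^{-jR^2/(2\sigma^2\Delta)}$ eventually --- but this estimate must actually be carried out, and your proposal does not do so.

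A smaller quantitative slippage: for fixed $\epsilon>0$ your tail bound carries $\exp(-(1-\epsilon)jR^2/(2\sigma^2\Delta))$, which is genuinely weaker than \eqref{star2}. In the moderate regime, where $R^2/(\sigma^2\Delta)\to\infty$, the discrepancy factor $\exp(\epsilon jR^2/(2\sigma^2\Delta))$ diverges and is not compensated by the extra factor $(\sigma\sqrt{\Delta})^d$ that your Gaussian $u$-integral gains over \eqref{star2}, so sending $\epsilon\downarrow 0$ after the fact does not help (the constant $\epsilon^{-d/2}$ from the Gaussian integral blows up). Your alternative fix --- running the decomposition at radius $R'=R/\sqrt{1-\epsilon}$ --- does work and is harmless for every downstream use of the lemma, since the proofs of Lemmas~\ref{covModlemma} and \ref{covNoiselemmabar} only use $R/r\to\infty$ and the asymptotics of $R/(\sigma\sqrt{\Delta})$ up to constant factors; but then \eqref{star1} holds with $B_{2\delta+R'/r}$ in place of $B_{2\delta+R/r}$, so you are proving a constant-adjusted variant of the lemma rather than its literal statement, and this substitution should be made explicit.
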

\begin{proof}
    
From \eqref{exlast}, and the locality of $\wh\xi_r$, we obtain
\begin{align*}
 \theta_j(r) = 
   \int_{(\bigcup_{\varrho\in\mathbb{Z}^d} B_{\delta r} (\varrho) )^{j-1}} \hspace{-1em}\der \bs z'\int_{ B_{\delta r}^{2k-j-1}}\hspace{-0.5em}  \der \bs y \der \bs z \der \bs y'\hspace{0.5em} \wh\xi_r(0,  \bs z,\bs y)
  \wh\xi_r(0, \bs z', \bs y') 
  \int_{\Rd} \bs{\varphi}(\bs z'-\bs z,u)  \der u  .
\end{align*}
Due to the periodicity of $\wh\xi_r$,
\begin{align*}
 \theta_j(r) = 
   \int_{ B_{\delta r}^{2k-2}} \hspace{-1em}\der \bs y \der \bs z \der \bs y'  \der \bs z' \hspace{0.5em}\wh\xi_r(0,  \bs z,\bs y)
  \wh\xi_r(0, \bs z', \bs y')
 \int_{\Rd} \sum_{\bs\varrho\in\mathbb{Z}^{d(j-1)}} \bs{\varphi}(\bs z'+\bs \varrho-\bs z,u)  \der u .
\end{align*}
We define the following shorthand notation
\begin{align*}
    \tilde{B}_{R+2\delta r}\coloneqq \bigcup_{\nu\in \mathbb{Z}^d} B_{R+2\delta r}(\nu),
\end{align*}
for some $R$ such that $\lim\limits_{n\to\infty}R= 0$, and  separate the domain of integration of $u$ into two parts,
\begin{align*}
\begin{alignedat}{2}
 \theta_j(r) = &
  \int_{B_{\delta r}^{2k-2}} \hspace{-0.5em}\der \bs y \der \bs z \der \bs y'  \der \bs z' \hspace{0.5em} \wh \xi_r(0,  \bs z, \bs y) 
  \wh\xi_r(0, \bs z', \bs y')\int_{\tilde B_{R+2\delta r}} \sum_{\bs\varrho\in\mathbb{Z}^{d(j-1)}} \bs{\varphi} (\bs z'+\bs \varrho-\bs z,u)  \der u   \\  
 +&  
  \int_{B_{\delta r}^{2k-2}} \hspace{-0.5em}\der \bs y \der \bs z \der \bs y'  \der \bs z' \hspace{0.5em} \wh\xi_r(0, \bs z, \bs y) 
  \wh\xi_r(0, \bs z', \bs y')
\int_{\Rd\setminus \tilde B_{R+2\delta r}}\sum_{\bs\varrho\in\mathbb{Z}^{d(j-1)}} \bs{\varphi} (\bs z'+\bs \varrho-\bs z,u)  \der u    
 \\
 =&
 \int_{B_{\delta r}^{2k-2}}  \hspace{-0.5em}\der \bs y \der \bs z \der \bs y'  \der \bs z' \hspace{0.5em}\wh \xi_r(0,  \bs z, \bs y) 
  \wh\xi_r(0, \bs z', \bs y')\\ 
 &\hspace{10em}\times\int_{B_{R+2\delta r}} \sum_{\bs\varrho\in\mathbb{Z}^{d(j-1)}} \sum_{\nu\in \mathbb{Z}^d}\bs{\varphi} (\bs z'+\bs \varrho-\bs z,u+\nu)  \der u    \\  
 +&  
  \int_{B_{\delta r}^{2k-2}}  \hspace{-0.5em}\der \bs y \der \bs z \der \bs y'  \der \bs z' \hspace{0.5em} \wh\xi_r(0, \bs z, \bs y) 
  \wh\xi_r(0, \bs z', \bs y')\\
 &\hspace{10em}\times\int_{Q\setminus B_{R+2\delta r}}\sum_{\bs\varrho\in\mathbb{Z}^{d(j-1)}} \sum_{\nu\in \mathbb{Z}^d} \bs{\varphi} (\bs z'+\bs \varrho-\bs z,u+\nu)  \der u  .
 \end{alignedat}
\end{align*}
Introducing the shorthand notation,
\begin{align*}
    \tilde \varphi(u)\coloneqq \sum_{\nu\in \mathbb{Z}^d}\varphi(u+\nu),
\end{align*}
we observe that
\begin{align*}
\begin{alignedat}{2}
    &\sum_{\bs\varrho\in\mathbb{Z}^{d(j-1)}} \sum_{\nu\in \mathbb{Z}^d} \bs{\varphi}(\bs z'+\bs \varrho-\bs z,u+\nu) \\ &= \sum_{\nu\in \mathbb{Z}^d}\varphi( u+\nu)  
    \sum_{\bs\varrho\in\mathbb{Z}^{d(j-1)}} \prod_{i=1}^{j-1} \varphi( z'_i+\varrho_i + u + \nu-z_{i}) = \sum_{\nu\in \mathbb{Z}^d}\varphi( u+\nu) \prod_{i=1}^{j-1}  \tilde \varphi( z'_i+u-z_{i}) ,
    \end{alignedat}
\end{align*}
and, therefore,
\begin{align}
\begin{alignedat}{2}\label{xikr}
 \theta_j(r)  
 =&\int_{B_{\delta r}^{2k-2}} \hspace{-0.5em}\der \bs y \der \bs z \der \bs y'  \der \bs z' \hspace{0.5em} \wh \xi_r(0,  \bs z, \bs y) 
  \wh\xi_r(0, \bs z', \bs y')\int_{B_{R+2\delta r}} \tilde \varphi( u) \prod_{i=1}^{j-1}   \tilde \varphi( z'_i+u-z_{i})  \der u   \\  
 +&  
  \int_{B_{\delta r}^{2k-2}} \hspace{-0.5em}\der \bs y \der \bs z \der \bs y'  \der \bs z' \hspace{0.5em} \wh\xi_r(0, \bs z, \bs y) 
  \wh\xi_r(0, \bs z', \bs y')\int_{Q\setminus B_{R+2\delta r}} \tilde \varphi( u) \prod_{i=1}^{j-1} \tilde \varphi( z'_i+u-z_{i}) \der u.
 \end{alignedat}
\end{align}

 Next, we consider the asymptotic behavior of ${\tilde\varphi}$ in each integral in \eqref{xikr}. Note that, for any $u\in \Rd$,
\begin{align*}
    &\tilde \varphi( u)=\sum_{\nu\in\mathbb{Z}^{d}} \frac{1}{(\sqrt{2\pi} \sigma)^{d}\Delta^{d/2}} \exp\left( -  \frac{\|u+\nu\|^2}{2\Delta\sigma^2}   \right)\\
    &= \frac{1}{(\sqrt{2\pi} \sigma)^{d}\Delta^{d/2}}\left[ \exp\left( -  \frac{\|u\|^2}{2\Delta\sigma^2}   \right)+
    \sum_{\ell=1}^\infty \sum_{\nu\in\zd}\ind\left\{ \sum_{i=1}^d |\nu_i|= \ell\right\} \exp\left( -  \frac{\|u+\nu\|^2}{2\Delta\sigma^2} \right) \right].
\end{align*}
 For all $\nu\in \zd$ such that $\sum_{i=1}^d |\nu_i|=\ell$, $\|u\|\leq \frac{1}{4d}$, and $\ell\geq 1$
\begin{align*}
    \|u+\nu\|^2\geq \|\nu\|^2 + \|u\|^2 -2\|u\|\|\nu\| &\geq \frac{\ell^2}{d} + \|u\|^2 - 2\|u\| \ell\\
    &\geq \frac{\ell^2}{d} + \|u\|^2 - \frac{\ell}{2d} \geq \frac{\ell}{2d} + \|u\|^2.
\end{align*}
Therefore,
\begin{align*}
    \sum_{\ell=1}^\infty &\sum_{\nu\in\zd}\ind\left\{ \sum_{i=1}^d |\nu_i|= \ell\right\}\nonumber \exp\left( -  \frac{\|u+\nu\|^2}{2\Delta\sigma^2} \right)\\ \leq& \sum_{\ell=1}^{\infty} (2\ell+1)^{d} \exp\left( -  \frac{\ell}{4 d \Delta\sigma^2} - \frac{\|u\|^2}{2\Delta\sigma^2} \right)
    \leq 3^d \exp\left( - \frac{\|u\|^2}{2\Delta\sigma^2} \right) \sum_{\ell=1}^{\infty} \ell^d \exp\left( -  \frac{\ell}{4 d \Delta\sigma^2}\right).
\end{align*}
Note that, 
\begin{align*} 
    \sum_{\ell=1}^{\infty} \ell^d \exp\left( -  \frac{\ell}{4 d \Delta\sigma^2}\right) 
    =  \sum_{\ell=1}^{d^2} \ell^d \exp\left( -  \frac{\ell}{4 d \Delta\sigma^2}\right) + \sum_{\ell=d^2+1}^{\infty} \ell^{d} \exp\left( -  \frac{\ell}{4 d \Delta\sigma^2}\right). 
\end{align*}
The first sum on the right hand side above converges to $0$ as $\sigma$ goes to $0$, and since
\begin{align*}
    d \log \ell \leq \sqrt{\ell} \log \ell \leq \ell 
\end{align*}
for all $\ell> d^2$, the second sum is bounded above by
\begin{align*}
    \sum_{\ell=1}^{\infty} \exp(\ell) \exp\Big( -  \frac{\ell}{4 d \Delta\sigma^2}\Big),
\end{align*}
which is a geometric sum that also converges to $0$. Therefore,
\begin{align}
   \tilde \varphi( u) \approx \frac{1}{(\sqrt{2\pi} \sigma)^{d}\Delta^{d/2}} \exp\left( -  \frac{\|u\|^2}{2\Delta\sigma^2},   \right)\label{psigma2}
\end{align}
for all $\|u\|\leq \frac{1}{4d}$. Note that in the domain of integration for the first term in \eqref{xikr} 
\begin{align*}
    \|u\|&\leq R+2\delta r =  o(1),\\
    \|z'_i+u-z_{i}\|&\leq R+4\delta r = o(1).
\end{align*}
Therefore, we can apply \eqref{psigma2} to the first term in \eqref{xikr}, which we write  as
\begin{align*}
\begin{split}
 \vartheta_j(r) \approx 
  \int_{B_{\delta r}^{2k-2}} \hspace{-0.5em}\der \bs y \der \bs z \der \bs y'  \der \bs z' \hspace{0.5em} &\wh\xi_r(0, \bs z, \bs y) \wh\xi_r(0, \bs z', \bs y') \frac{1}{(\sqrt{2\pi} \sigma)^{dj}\Delta^{dj/2}}\\
  &\times\int_{B_{R + 2\delta r}}  \exp\left( -  \frac{\sum_{i=1}^{j-1}  \|z'_{i}+u-z_i\|^2}{2\Delta\sigma^2} - \frac{\|u\|^2}{2\Delta\sigma^2} \right)  \der u .
\end{split}
\end{align*}
We make the change of variables $
\bs y\to r\bs y,
\bs y'\to r\bs y', \bs z\to r\bs z,
\bs z'\to r\bs z'$ and $u\to ru$,
and use the scale invariance property of $\wh \xi_r$ to obtain \eqref{star1}.

Now consider the second term in \eqref{xikr}, which will be denoted by $\vartheta_j'(r)$. The following upper bound holds for any $z',z\in B_{\delta r}$ and $u\in Q\setminus B_{R+2\delta r}$:
\begin{align*}
    \max\left(\tilde \varphi(u), \tilde \varphi(z'+u - z)\right) \leq \tilde \varphi( w)
\end{align*}
where $w\in\Rd$ is an arbitrary point with $\|w\|=R$.
Therefore,
\begin{align*}
    \vartheta_j'(r)\leq [\tilde \varphi(w)]^j\int_{B_{\delta r}^{2k-2}}  \wh\xi_r(0, \bs z, \bs y) 
  \wh\xi_r(0, \bs z', \bs y')
  \der \bs y \der \bs z \der \bs y'  \der \bs z'.
\end{align*}
Using \eqref{psigma2}, we can write
\begin{align*}
    \vartheta_j'(r)\lesssim\frac{1}{(\sqrt{2\pi} \sigma)^{dj}\Delta^{dj/2}} \exp\left( -  \frac{jR^2 }{2\Delta\sigma^2}\right)
 \left(\int_{B_{\delta r}^{k-1}}  \wh\xi_r(0,  \bs y) 
   \der \bs y \right)^2.
\end{align*}
With the change of variables, $\bs y\to r\bs y$, we obtain \eqref{star2}.
\end{proof}

Next, we make specific assumptions on $R$ to prove Lemmas~\ref{covModlemma} and \ref{covNoiselemmabar} separately. 
\begin{proof}[Proof of Lemma~\ref{covModlemma}]
In the \emph{moderate regime} where $\sigma\sqrt{\Delta}/r \to \sqrt{\beta}\in(0,\infty)$, we take 
\begin{align} \label{Rregime1}
    R\coloneqq \sigma\sqrt{\Delta r^{-\epsilon}},
\end{align}
for a fixed $0<\epsilon<1$.
Note that $R\to 0$ as required by Lemma~\ref{intermlem}, due to the definition of $R$ and the regime we work in.
Furthermore,
\begin{align}
  \|u\|^2 + \sum_{i=1}^{j-1}  \|z'_{i}+u-z_i\|^2 &=  \|u\|^2 + \sum_{i=1}^{j-1}  \left[\|u\|^2 - 2u^\top (z'_{i}-z_i) + \|z'_{i}-z_i\|^2 \right]\nn\\
    &=j\left(  \|u\|^2 - 2u^\top \sum_{i=1}^{j-1} \frac{z'_{i}-z_i}{j} \right)+ \sum_{i=1}^{j-1} \left\|z'_{i}-z_i\right\|^2 \nn\\
    &=j \left\|u- \sum_{i=1}^{j-1} \frac{z'_i-z_i}{j}\right\|^2 +  \sum_{i=1}^{j-1} \left\|z'_{i}-z_i\right\|^2 -j \left\|\sum_{i=1}^{j-1} \frac{z'_i-z_i}{j}\right\|^2.\label{uzexpansion} 
\end{align}
Therefore, using $R/r \to \infty$,
\begin{align*}
\begin{split}
 &\lim_{n\to\infty} 
  \int_{B_{2\delta +R/r}}  \exp\left( -  \frac{r^2}{2\Delta\sigma^2} \left(\|u\|^2 + \sum_{i=1}^{j-1}  \|z'_{i}+u-z_i\|^2\right)\right)  \der u   \\
  &=\lim_{n\to\infty} 
  \int_{B_{2\delta +R/r}} \exp\left( -  \frac{jr^2}{2\Delta\sigma^2}\left\|u- \sum_{i=1}^{j-1} \frac{z'_i-z_i}{j}\right\|^2   \right)  \der u  \\
  &\hphantom{=}\times\lim_{n\to\infty} \exp\left( -\frac{r^2}{2\Delta\sigma^2}\left(\sum_{i=1}^{j-1} \left\|z'_{i}-z_i\right\|^2 -j \left\|\sum_{i=1}^{j-1} \frac{z'_i-z_i}{j}\right\|^2\right)  \right)  \\
  &=\left(\frac{2\pi\beta}{j}\right)^{d/2} \exp\left( -\frac{1}{2\beta}\left(\sum_{i=1}^{j-1} \left\|z'_{i}-z_i\right\|^2 -j \left\|\sum_{i=1}^{j-1} \frac{z'_i-z_i}{j}\right\|^2\right)  \right).
  \end{split}
\end{align*}
Thus, from \eqref{star1},
\begin{align}\label{varthetajapprox}
    \vartheta_j(r) \cdot r^{-d(2k-j-1)} \approx \tilde \zeta_j(\beta),
\end{align}
where 
\begin{align*}
     \begin{split}
 \tilde \zeta_j(\beta)\coloneqq & 
 (2\pi \beta)^{-d(j-1)/2} j^{-d/2}
  \int_{B_\delta^{2k-2}}  \wh\xi_1(0, \bs z, \bs y) \wh\xi_1(0, \bs z', \bs y') \\
  &\times\exp\left( -\frac{1}{2\beta}\left(\sum_{i=1}^{j-1} \left\|z'_{i}-z_i\right\|^2 -j \left\|\sum_{i=1}^{j-1} \frac{z'_i-z_i}{j}\right\|^2\right)  \right) \der \bs y \der \bs z \der \bs y'  \der \bs z'.
 \end{split}
\end{align*}
Note that for $j=1$, $\tilde \zeta_j(\beta) = \kappa_j$ (see \eqref{kappakj}). For $j\geq 2$, we can write $\tilde \zeta_j(\beta)$ more concisely as
\begin{align}
\begin{split}\label{zetajtilde}
 \tilde \zeta_j(\beta)= &
  \int_{B_\delta^{2k-2}}  \wh\xi_1(0, \bs z, \bs y) \wh\xi_1(0, \bs z', \bs y')\\
  \times&\prod_{i=1}^d \frac{1}{(2\pi)^{(j-1)/2}\sqrt{\beta^{j-1}j} } \exp\left(-\frac{1}{2} \sum_{\ell, l=1}^{j-1} (z_{\ell,i} - z'_{\ell,i}) \mathrm{M}^{j,\beta}_{\ell,l} (z_{l,i} - z'_{l,i}) \right)  \der \bs y \der \bs z \der \bs y'  \der \bs z',
 \end{split}
\end{align}
where $z_{\ell,i}\in \mathbb{R}$ denotes the $i$-th entry of $ z_\ell\in \Rd$, and 
\begin{align*}
    \mathrm{M}^{j,\beta}_{\ell,l} \coloneqq 
    \begin{cases}
    \frac{j-1}{j\beta} \quad &\textrm{if} \quad \ell=l\\
    -\frac{1}{j\beta} \quad &\textrm{if} \quad \ell\neq l.
    \end{cases}
\end{align*}
The (invertible) matrix $\mathrm{M}^{j,\beta}$ can also be written as
\begin{align}\label{matrixMdef}
    \mathrm{M}^{j,\beta} = \frac{1}{j\beta} (j\mathrm{I}_{j-1} - \mathrm{J}_{j-1} )  ,
\end{align}
where $\mathrm{I}_{j-1}$ denotes the $(j-1)\times(j-1)$ identity matrix, and $\mathrm{J}_{j-1}$ denotes the matrix with the same dimensions but with all entries  $1$. Note that \rev{from the expression defining $\mathrm{M}^{j,\beta}$, adding all the columns to the first column, and then subtracting the first row from the other rows, we obtain an upper-triangular matrix with diagonal entries $\frac{1}{j\beta} , \frac{j}{j\beta},\ldots, \frac{j}{j\beta}$.} Therefore
$    \det|\mathrm{M}^{j,\beta}| = \frac{1}{\beta^{j-1}j}$, and \eqref{zetajtilde} can be rewritten as 
 \begin{align}\label{zetajprob}
    \tilde \zeta_j(\beta) = \ex\left[\wh\xi_1(0, \bs z, \bs y) \wh\xi_1(0, (\bs z + \bs w_{\beta}) , \bs y')  \right],
\end{align}
where $\bs w_\beta$ is a set of $(j-1)$ jointly Gaussian distributed vectors in $\Rd$ with the inverse covariance matrix $\mathrm M^{j,\beta}$ as  given in \eqref{matrixMdef}, and $\bs z, \bs y, \bs y'$ are iid points in $Q\subset \Rd$.

 Due to the feasibility of $\wh\xi_1$, $ \tilde \zeta_j(\beta)$ is strictly positive for all $\beta>0$, which gives us
$\vartheta_j(r) \asymp  r^{d(2k-j-1)}$.
With the  definition of $R$ given in \eqref{Rregime1}, it follows that
\begin{align*}
    \exp\left( -  \frac{jR^2 }{2\Delta\sigma^2}\right) = \exp\left( -  \frac{j}{2} r^{-\epsilon}\right)  \ll r^d. 
\end{align*}
Therefore $\vartheta_j'(r) \ll r^{d(2k-1)}\sigma^{-dj}\Delta^{-dj/2}$,  so that $\vartheta_j'(r)  \ll \vartheta_j(r)$, which, due to \eqref{varthetajapprox} \rev{and \eqref{sec42conc}}, gives
\begin{align*}
    \ex[\bar{f}_n(0) \bar{f}_n(\Delta)]
    &\approx \frac {\sum\limits_{j=1}^{k} \exp(-j\Delta) \frac{\tilde \zeta_j( \beta)}{j!((k-j)!)^2}   n^{-j} r^{d(2k-j-1)}} 
    {\sum\limits_{j=1}^{k} \frac{ \kappa_{j}}{j!((k-j)!)^2 }   n^{-j} r^{d(2k-j-1)}} \approx \sum\limits_{j=1}^{k} \exp(-j\Delta)  \zeta_j( \beta) \lambda_j,
\end{align*}
where
\begin{align}
\label{definezeta}
    \zeta_j(\beta)\coloneqq  \frac{\tilde \zeta_j(\beta)}{\kappa_j}.
\end{align}
Recall that $\kappa_j$ and $\lambda_j$ were defined at the end of the Proof of Lemma~\ref{covOUlemma}. 
Since $\tilde \zeta_1(\beta) = \kappa_1$ for all $\beta>0$, $\zeta_1(\beta)=1$. 
With this, we conclude that if $ \sigma\sqrt{\Delta}/r\to  \sqrt{\beta}\in(0,\infty)$,
\begin{align*}
    \ex[\bar{f}_n(0) \bar{f}_n(\Delta)]
    & \approx \lambda_1e^{-\Delta} + \sum\limits_{j=2}^{k} \exp(-j\Delta)  \zeta_j( \beta) \lambda_j
\end{align*}
for strictly positive functions $\zeta_j(\beta)$. \rev{Note that $\lim_{\beta\to 0^+} \zeta_j(\beta)= 1$, and, naturally, our covariance expressions for the moderate and slow regime coincide when $\beta\to 0^+$.}
\end{proof}

\begin{proof}[Proof of Lemma~\ref{covNoiselemmabar}]
In the fast regime, where
  $r\ll  \sigma\sqrt{\Delta} \ll 1$,
we will again apply the expansion \eqref{uzexpansion} to \eqref{star1}. 
Note that
\begin{align}\label{expto1}
    \exp\left( -\frac{r^2}{2\Delta\sigma^2}\left(\sum_{i=1}^{j-1} \left\|z'_{i}-z_i\right\|^2 -j \left\|\sum_{i=1}^{j-1} \frac{z'_i-z_i}{j}\right\|^2\right)  \right) \to 1,
\end{align}
and 
\begin{align}
     &\left(\frac{2\pi\Delta}{j}\right)^{-d/2} \left(\frac{r}{\sigma}\right)^d
  \int_{B_{2\delta +R/r}} \exp\left( -  \frac{jr^2}{2\Delta\sigma^2}\left\|u- \sum_{i=1}^{j-1} \frac{z'_i-z_i}{j}\right\|^2   \right)  \der u \nn \\
   &=\left(\frac{2\pi\Delta}{j}\right)^{-d/2} \left(\frac{r}{\sigma}\right)^d
  \int_{B_{2\delta  +R/r}(m_{\bs z})} \exp\left( -  \frac{jr^2}{2\Delta\sigma^2}\left\|v\right\|^2   \right)  \der v   \nn\\
    &=\left(2\pi\right)^{-d/2} 
  \int_{\Rd} \exp\left( -  \frac{\|v\|^2}{2}   \right) \ind\left\{\left\|v-\sqrt{\frac{j}{\Delta}}\frac{r}{\sigma} m_{\bs z}\right\|\leq  \left(2\delta +\frac{R}{r}\right) \sqrt{\frac{j}{\Delta}}\frac{r}{\sigma}  \right\}  \der v,\label{eqlemm43}
\end{align}
where
$m_{\bs z}\coloneqq \sum_{i=1}^{j-1} \frac{z'_i-z_i}{j} \in \Rd.$
Under the choice of
$
    R\coloneqq \sigma\sqrt{\Delta} \log\frac{1}{\sigma\sqrt{\Delta}}
$,
 note also that
$
\frac{R}{r} \sqrt{\frac{j}{\Delta}}\frac{r}{\sigma}\to\infty
$.
Therefore \eqref{eqlemm43} converges to $1$, which together with \eqref{expto1}, establishes the asymptotic behavior of the second integral in \eqref{star1}, giving
\begin{align*}
 \frac{\vartheta_j(r)} {r^{d(2k-2)}(\sigma\sqrt{\Delta})^{-d(j-1)}}&\to  (2\pi)^{d(1-j)/2} j^{-d/2} 
    \int_{B_{\delta}^{2k-2}}  \wh\xi_1(0, \bs z, \bs y) \wh\xi_1(0, \bs z', \bs y')\der \bs y \der \bs z \der \bs y'  \der \bs z'\\
   &= (2\pi)^{-d(j-1)/2} j^{-d/2} \kappa_1.
\end{align*}
Thus,
$
    \vartheta_j(r) \asymp r^{d(2k-2)}(\sigma\sqrt{\Delta})^{d(1-j)}.
$
For $\vartheta_j'(r)$, due to \eqref{star2} and the choice of $R$ we can write 
\begin{align*}
    \vartheta_j'(r) &\ll r^{d(2k-2)} (\sigma\sqrt{\Delta})^{-dj}  (\sigma\sqrt{\Delta})^{\sqrt{\log(\sigma\sqrt{\Delta})^{-1}}}\ll r^{d(2k-2)} (\sigma\sqrt{\Delta})^{d(1-j)} \ll \vartheta_j(r).
\end{align*} 
Therefore, we obtain,
\begin{align*}
  \ex[\bar{f}_n(0) \bar{f}_n(\Delta)] \approx \frac {\sum\limits_{j=1}^{k} \exp(-j\Delta)  \frac{ \kappa_1}{j!((k-j)!)^2} (2\pi\Delta)^{-d(j-1)/2}j^{-d/2} n^{-j} r^{d(2k-2)}\sigma^{d(1-j)}   } 
    { \sum\limits_{j=1}^{k}\frac{ \kappa_{j}}{j!((k-j)!)^2 } n^{-j} r^{d(2k-j-1)}}. 
\end{align*}
Rearranging the terms conclude the proof.
\end{proof}

\subsection{Covariance of the integrated process in the fast regime}\label{covintfastsubsec}
Before we present the proof of Lemma~\ref{covNoiselemma}
we give some results that concern the asymptotic behavior of the normalization term used to define $\tilde f_n(t)$, viz.
\[M_n= \int_{0}^1\ex[\bar{f}_{n}(0) \bar{f}_{n}(t)]\der t. \] 
Calculating exact asymptotics of this integral is impossible without the explicit characterization of the covariance in the transition regimes, which is out of the scope of this paper. However we can find  upper and lower bounds as given in the following proposition.

\begin{proposition}\label{propintasymp}
   Under the assumptions of Theorem~\ref{white noise thm}
    \begin{align*}
      \left(\frac{r}{\sigma}\right)^{2+\epsilon} \lesssim M_n \lesssim \left(\frac{r}{\sigma}\right)^{2 - \frac{4}{d(k-1)+2}}
    \end{align*}
    for any given $\epsilon>0$.
\end{proposition}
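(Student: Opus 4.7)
The plan is to estimate $M_n=\int_0^1 \ex[\bar f_n(0)\bar f_n(t)]\,dt$ by partitioning $[0,1]$ according to the three regimes of Lemmas~\ref{covOUlemma}--\ref{covNoiselemmabar}, which are governed by the quantity $\sigma\sqrt{t}/r$ and correspond to $t\ll(r/\sigma)^2$ (slow), $t\asymp (r/\sigma)^2$ (moderate), and $t\gg(r/\sigma)^2$ (fast). Since $\sigma/r\to\infty$, all three regimes fit inside $[0,1]$ for $n$ large. The upper and lower bounds will come from different choices of the splitting point.

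For the \emph{lower bound}, I would work entirely inside the slow regime. Set $T_-:=(r/\sigma)^{2+\epsilon}$, so that for $t\in[0,T_-]$ we have $\sigma\sqrt{t}/r\le(r/\sigma)^{\epsilon/2}\to 0$. Under $nr^d\to 0$, the proof of Lemma~\ref{covOUlemma} then yields $\ex[\bar f_n(0)\bar f_n(t)]\approx e^{-kt}$, which is bounded below by $e^{-k}$ uniformly on $[0,1]$. Integrating immediately gives $M_n\gtrsim T_-=(r/\sigma)^{2+\epsilon}$.

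For the \emph{upper bound}, write $m:=d(k-1)\ge 3$ and $\alpha:=2m/(m+2)=2-4/(m+2)$, and split $[0,1]=[0,T_+]\cup[T_+,1]$ with $T_+:=(r/\sigma)^{\alpha}$. Since $\alpha<2$, $T_+/(r/\sigma)^2\to\infty$, so Lemma~\ref{covNoiselemmabar} applies on $[T_+,1]$. On $[0,T_+]$, Cauchy--Schwarz gives $\ex[\bar f_n(0)\bar f_n(t)]\le 1$, contributing at most $T_+=(r/\sigma)^{\alpha}$. On $[T_+,1]$, Lemma~\ref{covNoiselemmabar} combined with $nr^d\to 0$ (so the denominator is dominated by its $j=k$ term) and $n\sigma^d\lesssim 1$ (which bounds $n^{k-j}\le C\sigma^{-d(k-j)}$) reduces each numerator term to a multiple of $e^{-jt}t^{-d(j-1)/2}(r/\sigma)^m$. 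Since $m/2>1$, $\int_{T_+}^1 t^{-m/2}\,dt\lesssim T_+^{1-m/2}$, giving the $j=k$ contribution $T_+^{1-m/2}(r/\sigma)^m=(r/\sigma)^{4m/(m+2)}$; a term-by-term check shows that every other $j$ (including $j=1$, which is bounded by $n^{k-1}r^m\lesssim(r/\sigma)^m$) yields an exponent of $r/\sigma$ at least $\alpha$. Since $4m/(m+2)>\alpha$ and $r/\sigma<1$, adding the two subinterval bounds produces $M_n\lesssim(r/\sigma)^{\alpha}=(r/\sigma)^{2-4/(d(k-1)+2)}$.

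The main obstacle is upgrading Lemma~\ref{covNoiselemmabar} from pointwise asymptotics at fixed $t$ to a uniform upper bound on the entire window $[T_+,1]$; this requires revisiting Lemma~\ref{intermlem} to show that both the ratio $\vartheta_j(r)/[r^{d(2k-2)}(\sigma\sqrt{t})^{d(1-j)}]$ and the ratio $\vartheta_j'(r)/\vartheta_j(r)$ remain under control uniformly in $t\in[T_+,1]$ when $R$ is taken as the $t$-dependent $\sigma\sqrt{t}\log(1/(\sigma\sqrt{t}))$ already appearing in the proof of Lemma~\ref{covNoiselemmabar}. Once that uniformity is established, the bookkeeping for the subleading $j$-terms is routine, and the specific value of $\alpha$ above is exactly the largest one for which the slow-side bound $T_+$ still dominates the integrated fast-side bound $T_+^{1-m/2}(r/\sigma)^m$ under the constraint $\alpha<2$ needed for the fast-regime asymptotics to be applicable.
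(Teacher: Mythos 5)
Your proposal is correct and follows essentially the same route as the paper: the same split points $T_-=(r/\sigma)^{2+\epsilon}$ and $T_+=(r/\sigma)^{2-4/(d(k-1)+2)}$, Lemma~\ref{covOUlemma} giving a covariance bounded below by a positive constant on $[0,T_-]$ for the lower bound, the trivial bound $\ex[\bar f_n(0)\bar f_n(t)]\le 1$ on $[0,T_+]$, and Lemma~\ref{covNoiselemmabar} (with $nr^d\to0$ and $n\sigma^d\lesssim1$ making the $j=k$ terms dominant) on $[T_+,1]$. The only difference is minor: the paper bounds the covariance on $[T_+,1]$ uniformly by its value at the left endpoint, namely $(r/\sigma)^{2-\rho}$ with $\rho=4/(d(k-1)+2)$, and multiplies by the window length, whereas you integrate the $t^{-d(j-1)/2}$ decay — a slightly sharper but immaterial refinement (note the uniformity-in-$t$ issue you flag as the ``main obstacle'' is equally present in the paper's argument and is already covered because Lemmas~\ref{covOUlemma}--\ref{covNoiselemmabar} are stated for $\Delta$ varying with $n$, so a subsequence argument yields the needed uniform bounds; also, your closing claim that $\alpha=2m/(m+2)$ is forced by balancing against the integrated fast-side bound is slightly off — with your integrated bound the slow-side term dominates for every $\alpha<2$, and $2-\rho$ is instead the exact balance point for the paper's endpoint-bound method — but this affects only the commentary, not the validity of the claimed inequality).
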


\begin{proof}
    For the lower bound we write
\begin{align*}
    M_n  = \int_{0}^{(r/\sigma)^{2+\epsilon}} \ex[\bar{f}_{n}(0) \bar{f}_{n}(t)]\der t + \int_{(r/\sigma)^{2+\epsilon}}^1\ex[\bar{f}_{n}(0) \bar{f}_{n}(t)]\der t,
\end{align*}
    and due to Lemma~\ref{covOUlemma} we observe
    \rev{\begin{align*}
       \liminf_{n}\inf_{0\leq t\leq (r/\sigma)^{2+\epsilon}} \ex[\bar{f}_{n}(0) \bar{f}_{n}(t)] =1.
    \end{align*}} 
     For the upper bound, we write
    \begin{align}\label{intupperbd}
    M_n = \int_{0}^{(r/\sigma)^{2-\rho}} \ex[\bar{f}_{n}(0) \bar{f}_{n}(t)]\der t + \int_{(r/\sigma)^{2-\rho}}^1\ex[\bar{f}_{n}(0) \bar{f}_{n}(t)]\der t,
    \end{align}
    where $\rho\coloneqq \frac{4}{d(k-1)+2}.$
    Furthermore, due to Lemma~\ref{covNoiselemmabar},
    \begin{align*}
        \ex[\bar{f}_{n}(0) \bar{f}_{n}(t)] &\lesssim \frac {\max_{1\leq j\leq k}  \left(\frac{r}{\sigma}\right)^{\frac{-d(j-1)(2-\rho)}{2}} (n\sigma^d)^{-j} \left(\frac{\sigma}{r}\right)^d} 
    {   (nr^d)^{-k}},
    \end{align*}
    for all $t\geq (r/\sigma)^{2-\rho}$. Note that due to the assumptions of Theorem~\ref{white noise thm}, $r/\sigma \to 0$ and $n\sigma^d\lesssim 1$, the maximum in the numerator is asymptotically achieved for $j=k$. Therefore,
    \begin{align*}
    \ex[\bar{f}_{n}(0) \bar{f}_{n}(t)]&\lesssim   \left(\frac{r}{\sigma}\right)^{\frac{-d(k-1)(2-\rho)}{2}} (n\sigma^d)^{-k} \left(\frac{\sigma}{r}\right)^d (nr^d)^k= \left(\frac{r}{\sigma}\right)^{\rho d(k-1)/2} = \left(\frac{r}{\sigma}\right)^{2-\rho}. 
    \end{align*}
     For $0\leq t\leq(r/\sigma)^{2-\rho}$ , we use the constant bound  $\ex[\bar{f}_{n}(0) \bar{f}_{n}(t)]\leq 1$, and the upper bound on $M_n$ follows from \eqref{intupperbd}.
\end{proof}

\begin{lemma}\label{covNoiseprop}
Under the assumptions of Theorem~\ref{white noise thm}, for any $t,\Delta\geq 0$, 
\begin{align}
    \lim_{n\to\infty} \ex\left[\int_{0}^{t+\Delta}  \tilde f_n(t)\tilde f_n(s) \der s\right] =\frac{1}{2}\ind\{t>0\} + \frac{1}{2}\ind\{\Delta>0\}. \label{covnoiseint1}
\end{align}
\end{lemma}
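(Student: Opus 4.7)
The plan is to use stationarity to reduce the statement to a single ratio of integrated covariances, and then to bound the ``tail'' via Lemma~\ref{covNoiselemmabar} while bounding the normalization $M_n$ from below via Proposition~\ref{propintasymp}.

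First, I would exchange expectation and integration (justified by square-integrability of $\bar f_n$) and apply the time stationarity from Remark~\ref{stationspacexihat}, so that $\ex[\bar f_n(s)\bar f_n(t)] = \ex[\bar f_n(0)\bar f_n(|s-t|)]$. Splitting the integral in $s$ at $s=t$ and substituting $u = |s-t|$ on each sub-interval gives
\begin{equation*}
\ex\!\left[\int_0^{t+\Delta}\tilde f_n(t)\tilde f_n(s)\,\der s\right] = \frac{I_n(t)+I_n(\Delta)}{2M_n},\qquad I_n(a)\coloneqq \int_0^{a}\ex[\bar f_n(0)\bar f_n(v)]\,\der v.
\end{equation*}
Since $M_n=I_n(1)$, it is enough to show $I_n(a)/M_n\to \ind\{a>0\}$ for each fixed $a\ge 0$; the case $a=0$ is trivial, and for $a>0$ this reduces to
\begin{equation*}
|I_n(a)-M_n|=\left|\int_{a\wedge 1}^{a\vee 1}\ex[\bar f_n(0)\bar f_n(v)]\,\der v\right|=o(M_n).
\end{equation*}

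For the tail bound, I would observe that since $r/\sigma\to 0$, any $v$ in the compact interval $[a\wedge 1,\,a\vee 1]$ eventually satisfies $\sigma\sqrt{v}/r\to\infty$, placing it in the range of Lemma~\ref{covNoiselemmabar}. Under the standing assumptions $nr^d\to 0$ and $n\sigma^d\lesssim 1$, both sums in that lemma are dominated by their respective $j=k$ summands, yielding
\begin{equation*}
\ex[\bar f_n(0)\bar f_n(v)]\lesssim v^{-d(k-1)/2}(r/\sigma)^{d(k-1)}.
\end{equation*}
The uniformity of this bound on $[a\wedge 1,\,a\vee 1]$ can be extracted from the proof of Lemma~\ref{covNoiselemmabar}, since convergence to the asymptotic expression there is uniform on compact $v$-sets bounded away from $0$. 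Integrating and using that the interval is bounded and bounded away from $0$ gives $|I_n(a)-M_n|\lesssim C_a(r/\sigma)^{d(k-1)}$ for some constant $C_a>0$.

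Finally, Proposition~\ref{propintasymp} supplies the lower bound $M_n\gtrsim (r/\sigma)^{2+\epsilon}$ for every $\epsilon>0$. Choosing $\epsilon>0$ with $\epsilon<d(k-1)-2$ (which is possible exactly because $d(k-1)\ge 3$) yields
\begin{equation*}
\frac{|I_n(a)-M_n|}{M_n}\lesssim (r/\sigma)^{d(k-1)-2-\epsilon}\longrightarrow 0,
\end{equation*}
completing the reduction. The most delicate step is this exponent comparison: the plan requires that the polynomial decay of the covariance at macroscopic time separations strictly beats the crude lower bound on the ``correlation scale'' $M_n$, and the hypothesis $d(k-1)\ge 3$ in Theorem~\ref{white noise thm} is precisely what makes the comparison go through, as the argument would break down already for $d(k-1)=2$.
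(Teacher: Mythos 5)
Your proposal is correct and follows essentially the same route as the paper's proof: both reduce via Fubini and stationarity to comparing integrals of $\ex[\bar f_n(0)\bar f_n(v)]$ against the normalization $M_n$, bound the covariance at times bounded away from $0$ by the dominant $j=k$ term of Lemma~\ref{covNoiselemmabar} (yielding order $(r/\sigma)^{d(k-1)}$), and defeat the lower bound $M_n\gtrsim (r/\sigma)^{2+\epsilon}$ of Proposition~\ref{propintasymp} through the exponent comparison made possible by $d(k-1)\geq 3$. Your split of the time integral at $s=t$, giving $I_n(t)+I_n(\Delta)$ and handling the degenerate cases $t=0$ or $\Delta=0$ uniformly, is a cosmetic streamlining of the paper's three-interval split around $t$ rather than a genuinely different argument.
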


\begin{proof}

 

To prove \eqref{covnoiseint1}, we first show that the integral and the expectation in \eqref{covnoiseint1} are interchangeable, that is,
\begin{align}
    \ex\left[\int_{0}^{t+\Delta}  \tilde f_n(t)\tilde f_n(s) \der s\right] = \int_{0}^{t+\Delta}  \ex[\tilde f_n(t)\tilde f_n(s)] \der s \label{exintchange}
\end{align}
for all $n$. Note that
\begin{align*}
    \ex[|\tilde f_n(t)\tilde f_n(s)|] &= \frac{\ex\left[\left|\widehat f_n(t)- \ex[\widehat f_n(t)]\right|\left|\widehat f_n(s)- \ex[\widehat f_n(t)]\right| \right] }{2M_n\var[\widehat f_n(t)]} \leq \frac{\ex[\widehat f_n(t)\widehat f_n(s)] + 3\left(\ex[\widehat f_n(t)]\right)^2 }{2M_n\var[\widehat f_n(t)]} ,
\end{align*}
for all $n$ and $t,s \geq 0$. Using stationarity in time,
\begin{align*}
    \ex[\widehat f_n(t)\widehat f_n(s)] \leq \frac{\ex[\widehat f_n(t)^2] + \ex[\widehat f_n(s)^2]}{2} = \ex[\widehat f_n(t)^2],
\end{align*}
which gives
\begin{align}\label{exfntbound}
    \ex[|\tilde f_n(t)\tilde f_n(s)|] 
     \leq \frac{\var[\widehat f_n(t)] + 4\left(\ex[\widehat 
   f_n(t)]\right)^2 }{\var[\widehat f_n(t)] 2M_n}. 
\end{align}
For fixed $n$, $\var[\widehat f_n(t)]$ and $\ex[\widehat f_n(t)]$ were given in terms of $n$ before. Since  $M_n>0$, 
\eqref{exintchange} follows by Fubini.

Next, we focus on the right hand side of \eqref{exintchange}. First assume $t,\Delta>0$, let  $u\coloneqq \frac{t\wedge\Delta}{2}$, and note that  
\begin{align}
     &\int_{0}^{t+\Delta}  \ex[\bar f_n(t)\bar f_n(s)] \der s \nonumber\\
     &=\int_{0}^{t-u}  \ex[\bar f_n(t)\bar f_n(s)] \der s + \int_{t-u}^{t+u}  \ex[\bar f_n(t)\bar f_n(s)] \der s + \int_{t+u}^{t+\Delta}  \ex[\bar f_n(t)\bar f_n(s)] \der s \nonumber \\
    &  =\int_{u}^{t}  \ex[\bar f_n(0)\bar f_n(s)] \der s + 2 \int_{0}^{u}  \ex[\bar f_n(0)\bar f_n(s)] \der s 
    + \int_{u}^{\Delta}  \ex[\bar f_n(0)\bar f_n(s)] \der s \label{intoinfty}.
\end{align}
Since $\sigma /r\to \infty$ and $1\lesssim s $ for all $0< u\leq s\leq t$, we can use Lemma~\ref{covNoiselemmabar} to obtain,  
\begin{align*}  
    \ex[\bar f_n(0)\bar f_n(s)] \leq \sum_{j=1}^k \exp(-js) g(n), 
\end{align*}
for some bounded function $g(n)$ satisfying 
\[ g(n)\lesssim \frac{\max_{1\leq j\leq k} (n\sigma^d)^{-j} \left(\frac{\sigma}{r}\right)^d} {(nr^d)^{-k}} \lesssim \left(r/\sigma\right)^{d(k-1)} \lesssim 1,\]  
from which we conclude that $\ex[\bar f_n(0)\bar f_n(s)]$ is bounded by an integrable function. Therefore, using Reverse Fatou,
\begin{align*}
   \limsup_{n\to\infty}  \int_{u}^{t}  \ex[\bar f_n(0)\bar f_n(s)]\der s \leq \int_{u}^{t} \limsup_{n\to\infty} \ex[\bar f_n(0)\bar f_n(s)]\hspace{0.5em}\der s\lesssim \left(r/\sigma\right)^{d(k-1)}
\end{align*}
for all $0<u<t$. Furthermore, from Proposition~\ref{propintasymp},
\begin{align}
   \frac{\int_{u}^{t}  \ex[\bar f_n(0)\bar f_n(s)] \der s }{M_n}  \ll \left(\frac{r}{\sigma}\right)^{d(k-1)-2-0.5}\ll 1, \label{intoverint}
\end{align}
since $d(k-1)\geq 3$. The same asymptotic upper bound holds for the integral $\int_{u}^{\Delta}  \ex[\bar f_n(0)\allowbreak \bar f_n(s)] \der s$.
Thus, from \eqref{intoinfty},
\begin{align*}
\lim_{n\to\infty} \ex\left[\int_{0}^{t+\Delta}  \tilde f_n(t)\tilde f_n(s) \der s\right] = 
\lim_{n\to\infty} \int_{0}^{t+\Delta}  \ex[\tilde f_n(t)\tilde f_n(s)] \der s = \lim_{n\to\infty} \frac{\int_{0}^{t+\Delta}  \ex[\bar f_n(t)\bar f_n(s)] \der s}{2 M_n}\\= \lim_{n\to\infty} \frac{\int_{0}^{u}  \ex[\bar f_n(0)\bar f_n(s)] \der s}{M_n}= 1 - \lim_{n\to\infty} \frac{\int_{u}^{1}  \ex[\bar f_n(0)\bar f_n(s)] \der s}{M_n} =1
\end{align*}
where we used \eqref{intoverint} in the last step. 

If $t=0$ or $\Delta=0$ but $t+\Delta>0$, write $u\coloneqq  t+\Delta$, and note that 
\begin{align*}
    \lim_{n\to\infty} \ex\left[\int_{0}^{t+\Delta}  \tilde f_n(t)\tilde f_n(s) \der s\right] &= 
    \lim_{n\to\infty} \int_{0}^{u}  \ex[\tilde f_n(t)\tilde f_n(s)] \der s = \lim_{n\to\infty} \frac{\int_{0}^u  \ex[\bar f_n(0)\bar f_n(s)] \der s}{2M_n}
    = \frac{1}{2}
\end{align*}
where we again used Fubini in the first step since the upper bound \eqref{exfntbound} holds for all $t,x\geq 0$. Therefore \eqref{covnoiseint1} follows. 
\end{proof}

\begin{proof}[Proof of Lemma~\ref{covNoiselemma}]
Assume $t_1\leq t_2$ without loss of generality. We write,
\begin{align*}
    \lim_{n\to\infty} \ex\left[\int_{0}^{t_1} \int_{0}^{t_2} \tilde f_n(s_1)\tilde f_n(s_2) \der s_2 \der s_2\right] &= \lim_{n\to\infty} \int_{0}^{t_1} \int_{0}^{t_2} \ex\left[\tilde f_n(s_1)\tilde f_n(s_2)\right] \der s_2 \der s_1  \\
    &=  \int_{0}^{t_1} \lim_{n\to\infty} \int_{0}^{t_1} \ex\left[\tilde f_n(s_1)\tilde f_n(s_2)\right] \der s_2 \der s_1 
    = t_1, 
\end{align*}
where we used Fubini and \eqref{exfntbound} to interchange the expectation and the integral. The rest of the proof follows from the fact that   $\ex[\tilde f_n(s_1)\tilde f_n(s_2)]\geq 0$
for all $s_1,s_2\geq 0$ and from  \eqref{covnoiseint1}.
\end{proof}

\subsection{Finite dimensional distributions}
In this section, we prove that the finite dimensional distributions of the processes of interest to us converge weakly to multivariate Gaussian distributions under the relevant assumptions of Theorems \ref{slow thm}-\ref{white noise thm}.

\begin{lemma}\label{finite}
Suppose that $ \sigma /r \lesssim 1$ and $ n^kr^{d(k-1)} \to \infty$. Then the finite dimensional distributions of $\bar{f}_{n}(t)$ converge to multivariate Gaussian with covariance that depends on the asymptotical behavior of $\sigma/r$.
\end{lemma}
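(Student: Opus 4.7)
The plan is to establish the finite-dimensional convergence via the Cramér-Wold device. For arbitrary $m\ge 1$, times $0\le t_1<\cdots<t_m\le T$, and coefficients $a_1,\ldots,a_m\in\mathbb R$, I would show that
\[
S_n := \sum_{i=1}^m a_i\,\bar f_n(t_i)
\]
is asymptotically Gaussian with variance determined by Lemmas~\ref{covOUlemma} and \ref{covModlemma}. Using Observation~\ref{Propsamedist}, it suffices to work with the marked-process analogue $\widehat S_n$ built from $\wh f_n(t_i)$.

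The key observation is that $\widehat S_n$ is itself a centered Poisson $U$-statistic of order $k$ over $\wh\eta_{n,T}$: with $\wh{\bs x}=(\wh x_1,\ldots,\wh x_k)$,
\[
\widehat S_n - \ex[\widehat S_n] \;=\; \sum_{\wh{\bs x}\subset\wh\eta_{n,T}} h_n(\wh{\bs x}) \;-\; \ex\!\Big[\sum_{\wh{\bs x}\subset\wh\eta_{n,T}} h_n(\wh{\bs x})\Big],
\qquad
h_n(\wh{\bs x}) \;=\; \sum_{i=1}^m \frac{a_i}{\sqrt{\var[\wh f_n(t_i)]}}\,\wh\xi_r(\wh{\bs x}(t_i))\,\tau_{t_i}(\wh{\bs x}).
\]
I would then invoke the Malliavin–Stein normal approximation bounds for Poisson $U$-statistics from \cite{lach1,reitzner2013}, which control the Wasserstein distance between $\widehat S_n/\sqrt{\var[\widehat S_n]}$ and $\mathcal N(0,1)$ by a sum of contraction integrals of $h_n$ of the form $\int (h_n\star_\ell^p h_n)^2$ for various $\ell,p$, together with $\|h_n\|_2^3$-type terms. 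Since convergence of the (non-degenerate) covariance matrix is already provided by Lemmas~\ref{covOUlemma}–\ref{covModlemma}, once these contraction bounds vanish, Cramér–Wold yields the joint Gaussian limit.

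The main obstacle is showing each contraction term tends to zero in the stated regime. Each such integral decomposes as a sum over intersection patterns of the $k$-tuples; by the localization and scale invariance of $\wh\xi_r$, together with the Gaussian density bounds developed in Lemma~\ref{intermlem}, each piece can be computed in closed form up to constants, and its $r,\sigma,n$ scaling mirrors that of the moments computed above. The crucial cancellations come from dividing by powers of $\var[\wh f_n(t_i)]$, which, by \eqref{varfntasymp} and \eqref{alphaasymp}, is of order $\max_j n^{2k-j} r^{d(2k-j-1)}$; the hypothesis $n^k r^{d(k-1)}\to\infty$ guarantees that the variance dominates the contraction numerators in exactly the same way it does in the static case \cite{onaran2022functional}. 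The extra condition $\sigma/r\lesssim 1$ is used to ensure that the Brownian displacements do not enlarge the effective support of $h_n$ beyond a bounded multiple of $r$, so that localization-based volume estimates carry over essentially unchanged from the slow regime.

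Concretely, I would adapt the static $U$-statistic arguments from \cite{onaran2022functional} by treating $h_n$ as a kernel on the enlarged point space $Q\times\mathbb R_+\times\mathbb R_+\times C_{\mathbb R^d}[0,T]$ and exploiting the product structure of the intensity measure of $\wh\eta_{n,T}$: the lifetime and birth marks contribute bounded factors via $\tau_{t_i}$, while the path marks contribute Gaussian convolution factors that, under $\sigma/r\lesssim 1$, are dominated pointwise by $O(r^{-d})$ on the relevant support. The resulting bounds are structurally identical to the static case and tend to zero under $n^k r^{d(k-1)}\to\infty$, completing the proof.
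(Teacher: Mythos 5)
Your overall architecture matches the paper's: Cram\'er--Wold, passing to the marked process via Observation~\ref{Propsamedist}, recognizing the linear combination as a Poisson $U$-statistic of order $k$ on $Q\times\mathbb R_+\times\mathbb R_+\times C_{\Rd}[0,T]$, and invoking Malliavin--Stein bounds from \cite{lach1,reitzner2013}. One tactical difference: you propose to estimate each contraction integral $\|h_n\starop_m^\ell h_n\|_{L^2}$ directly, decomposing over intersection patterns. The paper avoids this entirely: it first passes from the $U$-statistic kernel $h_n$ to the explicit Wiener chaos kernels $g_\ell$ (Proposition~\ref{UstatWiener}), and then uses D\"obler's inequality (Lemma~\ref{propdobler}), $\|g_i\starop_m^\ell g_j\|_{L^2}\leq\|g_i\|_{L^4}\|g_j\|_{L^4}$, so that the entire Wasserstein bound reduces to $\max_\ell\|g_\ell\|_{L^4(\mu^\ell)}^2$ (Corollary~\ref{cltcorol}). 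That single $L^4$ estimate is done with diameter indicators and spatial homogeneity alone --- the Brownian marks integrate out exactly, so no Gaussian-density bounds are needed there at all. Relatedly, your claim that under $\sigma/r\lesssim 1$ the path marks contribute Gaussian factors ``dominated pointwise by $O(r^{-d})$'' is false in the slow regime: the density $\varphi$ has supremum of order $(\sigma\sqrt{\Delta})^{-d}\gg r^{-d}$ when $\sigma\ll r$. What is true (and what the paper exploits) is that these kernels have unit mass and can be integrated out; pointwise domination is the wrong mechanism.

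The genuine gap is your assertion that ``convergence of the (non-degenerate) covariance matrix is already provided by Lemmas~\ref{covOUlemma}--\ref{covModlemma}.'' Those lemmas only identify the \emph{limit} of the covariance function; the Malliavin--Stein bound divides by $\rho$, so you must separately prove that $\rho^2=\lim_n\var[\bar G_n]=\sum_{i,\ell}\omega_i\omega_\ell\lim_n\cov[\bar f_n(t_i),\bar f_n(t_\ell)]$ is strictly positive for every nonzero $\bs\omega$, and this is a substantive step, not a citation. In the slow regime it follows because the limit is a positive combination of OU covariance matrices $\mathrm T^{(j)}$, which are positive definite. In the moderate regime the limit has the form $\lambda_1\bs\omega^\top\mathrm T^{(1)}\bs\omega+\sum_{j\geq 2}\tilde\lambda_j\,\bs\omega^\top\bigl[\tilde{\mathrm T}^{(j)}\circ\mathrm T^{(j)}\bigr]\bs\omega$ with Hadamard products, and the paper must work to show positivity: it represents $\tilde{\mathrm T}^{(j)}$ as the correlation matrix of $\wh\xi_1(0,(\bs z+\bs W(\beta t)),\bs y)$ for a non-degenerate Gaussian process $\bs W$, uses the feasibility assumption (Remark on feasibility of $\wh\xi_1$) to get $\bs\omega^\top\tilde{\mathrm T}^{(j)}\bs\omega>0$, and then applies the Schur product theorem to conclude $\tilde{\mathrm T}^{(j)}\circ\mathrm T^{(j)}$ is positive definite. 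Nothing in your proposal supplies this argument, and without it the normal-approximation bound you invoke cannot be applied in the moderate regime, so the proof as written is incomplete precisely where the dynamics make this lemma harder than its static counterpart in \cite{onaran2022functional}.
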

In proving Lemma~\ref{finite} we will use the Cramer-Wold theorem and the marked point process structure we built. Note that, using \eqref{fnhatdef}, the linear combination of $\wh f_n(t)$ across different time samples can be written as 
\begin{align*}
\sum_{i=1}^m \omega_i \widehat f_n(t_i) = \sum_{\mchX\subseteq \wh \eta_{n,T}}  \sum_{i=1}^m \omega_i \wh \xi_r(\mchX (t_i))  \tau_{t_i}(\mchX),
\end{align*}
for some coefficients $\omega_i\neq 0$, $1\leq i\leq m$, and time instances $t_1<\cdots<t_m<T$.
Our proof of Lemma~\ref{finite} uses the normal limit theory developed in \cite{lach1} for functions of finite Wiener chaos expansion on the Poisson space. 

\subsubsection{Wiener Chaos and U-statistics}
Recent developments combining Malliavin calculus with 
Stein method on the Wiener chaos space led to fascinating normal approximation results which eventually found extensive use in several problems of stochastic geometry. The next statement is the essential component in all these results. (See \cite{nualfock} for a proof.) 
\begin{proposition}[Proposition 2.4 in \cite{lach1}]
    Take a Poisson point measure $\eta$ in $\cQ$ with intensity measure $\mu$. Denote the associated \emph{compensated} Poisson measure as $\tilde\eta\coloneqq \eta - \mu$. Every square integrable random variable $G$ with respect to $\eta$ admits a unique chaos decomposition,
   \begin{align}\label{chaosexp}
        G = \ex[G] + \sum_{\ell=1}^\infty I_\ell(g_\ell) ,
    \end{align}
    where each $g_\ell:\cQ^\ell\to \mathbb{R}$ is a square integrable function and $I_\ell(g)$ denotes the multiple Wiener-It\^{o} integral of order $\ell$,  viz.
    \begin{align*}
        I_\ell(g)\coloneqq \int_{\cQ^{\ell}} g(\bs x_{\ell}) \der \tilde\eta^{\otimes \ell}(\bs x_{\ell}).
    \end{align*}
\end{proposition}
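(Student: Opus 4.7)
The plan is to establish the chaos decomposition in three stages: first construct the multiple Wiener--It\^o integrals $I_\ell$ on a dense subclass of symmetric kernels, then extend them by an isometry argument while verifying orthogonality across different orders, and finally prove that the resulting collection of chaoses spans all of $L^2(\sigma(\eta))$. The uniqueness of the expansion \eqref{chaosexp} then follows immediately from the orthogonality.

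\emph{Construction on elementary kernels.} I would start with the class $\mathcal{S}_\ell$ of symmetric functions on $\cQ^\ell$ of the form $g = \sum_{i_1,\dots,i_\ell} a_{i_1\cdots i_\ell} \mathbf{1}_{A_{i_1}\times\cdots\times A_{i_\ell}}$, where $A_1,A_2,\dots$ are pairwise disjoint with $\mu(A_i)<\infty$ and where the coefficients vanish on the diagonals (i.e.\ whenever any two indices coincide). For such a kernel, set
\[
I_\ell(g) \coloneqq \sum_{i_1,\dots,i_\ell} a_{i_1\cdots i_\ell} \, \tilde\eta(A_{i_1})\cdots \tilde\eta(A_{i_\ell}).
\]
Because the $A_i$ are disjoint, the variables $\tilde\eta(A_i)$ are independent, each centered with $\var[\tilde\eta(A_i)] = \mu(A_i)$, which makes the computation of moments entirely explicit. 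A direct calculation then gives the isometry
\[
\ex[I_\ell(g)^2] \;=\; \ell!\,\|g\|_{L^2(\mu^{\otimes\ell})}^2 \qquad \text{for } g\in\mathcal{S}_\ell.
\]

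\emph{Extension and cross-orthogonality.} Using the density of $\mathcal{S}_\ell$ in the symmetric part $L^2_s(\mu^{\otimes\ell})$ (achieved by approximating general kernels by off-diagonal simple functions, controlling diagonal contributions by $\mu^{\otimes\ell}(\{\text{diagonals}\})=0$ for non-atomic $\mu$, with the usual modifications for atoms), the isometry extends $I_\ell$ to all of $L^2_s(\mu^{\otimes\ell})$. For non-symmetric kernels we define $I_\ell(g):= I_\ell(\tilde g)$ with $\tilde g$ the symmetrization, which yields $\ex[I_\ell(g)^2] = \ell!\|\tilde g\|^2$. The cross-orthogonality $\ex[I_\ell(g)I_m(h)] = 0$ for $\ell\ne m$ reduces, by bilinearity and a limiting argument, to computing expectations of products of distinct powers of independent centered $\tilde\eta(A_i)$; at least one factor appears to a single power, and Fubini together with $\ex[\tilde\eta(A)]=0$ kills the expectation.

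\emph{Density --- the main obstacle.} The heart of the matter is showing that the closed linear span of the constants and $\{I_\ell(g_\ell) : \ell\ge 1,\; g_\ell\in L^2_s(\mu^{\otimes\ell})\}$ is all of $L^2(\sigma(\eta))$. My plan is to work through the \emph{stochastic exponential} (Wick exponential)
\[
\mathcal{E}(f) \coloneqq \exp\!\Bigl(\int_\cQ \log(1+f(x))\, d\eta(x) - \int_\cQ f(x)\, d\mu(x)\Bigr),
\]
defined for bounded measurable $f$ with support of finite $\mu$-measure and $1+f>0$. A direct expansion of the Poisson functional integral and a combinatorial rearrangement identify $\mathcal{E}(f) = \sum_{\ell\ge 0} \tfrac{1}{\ell!} I_\ell(f^{\otimes \ell})$, so each $\mathcal{E}(f)$ already lies in the closed chaos span. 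The nontrivial density step is to argue that the family $\{\mathcal{E}(f)\}$ is total in $L^2(\sigma(\eta))$: if $F\in L^2(\sigma(\eta))$ is orthogonal to every $\mathcal{E}(f)$, then $\ex[F \cdot \exp(\int \log(1+tf)\,d\eta)]=0$ for all small $t$, from which one extracts $\ex[F\,\mathbf{1}_{\{\eta(A_1)=n_1,\dots,\eta(A_k)=n_k\}}]=0$ for disjoint $A_j$ and arbitrary $n_j$; since such cylinder events generate $\sigma(\eta)$, this forces $F=0$. This is the step where I would expect the most technical work, because one must justify the series manipulations and the analytic continuation in the parameters of $f$, and handle approximation of general bounded measurable $f$ by simple functions.

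\emph{Uniqueness.} If $G = \ex[G] + \sum_{\ell\ge 1} I_\ell(g_\ell) = \ex[G'] + \sum_{\ell\ge 1} I_\ell(g_\ell')$ with $g_\ell, g_\ell'$ symmetric, taking expectations gives $\ex[G]=\ex[G']$, and pairing the difference with $I_m(h)$ for arbitrary symmetric $h$ and using orthogonality across orders together with the isometry at order $m$ yields $\langle g_m - g_m', h\rangle = 0$ for all $h$, hence $g_m=g_m'$ in $L^2_s(\mu^{\otimes m})$. This completes the proof.
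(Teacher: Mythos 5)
The paper itself contains no proof of this proposition: it is quoted from \cite{lach1}, and the proof is deferred to Nualart and Vives \cite{nualfock}, so your attempt can only be compared against the literature route. Your outline is correct, and it is essentially the classical It\^o-style proof of the chaotic representation property: the isometry $\ex[I_\ell(g)^2]=\ell!\,\|g\|^2_{L^2(\mu^{\otimes\ell})}$ on off-diagonal simple kernels, cross-order orthogonality, and completeness via totality of the Wick exponentials $\mathcal{E}(f)=\sum_{\ell\geq 0}I_\ell(f^{\otimes\ell})/\ell!$ all go through as you describe. In the totality step the analytic-continuation worry you flag is mild, since $\eta(A)$ is Poisson: the weighted generating function $\ex\bigl[F\prod_j z_j^{\eta(A_j)}\bigr]$ is entire in the $z_j$, so its vanishing near $z_j=1$ kills every coefficient $\ex\bigl[F\ind\{\eta(A_1)=n_1,\ldots,\eta(A_k)=n_k\}\bigr]$, and cylinder events generate $\sigma(\eta)$. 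Two points of comparison with the cited proofs are worth recording. First, your density step (off-diagonal simple kernels dense in $L^2_s(\mu^{\otimes\ell})$) genuinely requires $\mu$ diffuse; that is harmless in this paper, where the intensity measure of the marked process $\wh\eta_{n,T}$ on $Q\times\mathbb{R}_+\times\mathbb{R}_+\times C_{\Rd}[0,T]$ is non-atomic, but the difference-operator proofs (Nualart--Vives, and Last--Penrose's book) handle arbitrary $\sigma$-finite intensities without this case split. Second, and more substantively, the difference-operator route produces the explicit kernel identification $g_\ell=\frac{1}{\ell!}\ex[D^\ell G]$, whereas your exponential-totality argument yields existence and uniqueness abstractly, with no formula for $g_\ell$. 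That explicit formula is precisely what underlies Proposition~\ref{UstatWiener}, which the paper then uses to compute the Wiener kernels $\bar g_\ell$ of its $U$-statistics in the proof of Lemma~\ref{finite}; so your proof establishes the proposition as stated, while the cited proof additionally delivers the computational handle on which the rest of Section~\ref{proofsec} relies.
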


\begin{definition}
 $G$ is called a $U$-statistic of order $k$ on the Poisson point process $\eta$ if it satisfies
\begin{align}
    G(\eta) = \sum_{\bs x\in \eta_{\neq}^k} h(\bs x) \label{Ustat} 
\end{align}
for some kernel function $h$, where $\eta_{\neq}^k$ is the set of all $k$-tuples of distinct points in $\eta$. 
\end{definition}

Note that if we assign the following function of the $k$-tuple of marked points  
\begin{align*}
    h(\wh{\bs x}) = \frac{1}{k!} \sum_{i=1}^m \omega_i \wh \xi_r(\wh{\bs x} (t_i))  \tau_{t_i}(\bs x)
\end{align*} 
with the notation 
\begin{align*}
    \wh{\bs x} (t)\coloneqq \left(x_1+Z_{x_1}(t)-Z_{x_1}(B_{x_1}), \ldots, x_k+Z_{x_k}(t)-Z_{x_k}(B_{x_k})\right),  
\end{align*}
then we obtain 
\[G(\wh \eta_{n,T}) =\sum_{i=1}^m \omega_i \widehat f_n(t_i).\]
Therefore, the linear combination $\sum_{i=1}^m \omega_i \widehat f_n(t_i)$, which is of interest to us in the Cramer-Wold theorem, is a special case of a $U$-statistic with a \emph{symmetric} kernel.

The application of the Malliavin-Stein methods to  $U$-statistics was studied in \cite{reitzner2013}. The crucial observation that led to  central limit theorems for  $U$-statistics on Poisson space is the following.
\begin{proposition}[Lemma 3.5 and Theorem 3.6 in \cite{reitzner2013}]\label{UstatWiener}
    Assume the kernel $h$ in \eqref{Ustat} is such that $G$ is square integrable. Then $h$ is also square integrable and $G$ has a finite Wiener chaos expansion. That is, it can be written in the form \eqref{chaosexp} with $g_\ell=0$ for $\ell>k$ and each $g_\ell$ for $1\leq \ell\leq k$ admits the form
    \begin{align*}
        g_\ell(\bs x_\ell) = \binom{k}{\ell}\int_{\cQ^{k-\ell}} h(\bs x_\ell, \bs y_{k-\ell}) \der \mu^{k-\ell}.
    \end{align*} 
\end{proposition}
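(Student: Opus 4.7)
My plan is to prove this via the Malliavin calculus on Poisson space, exploiting the fact that the chaos kernels admit the explicit Stroock formula $g_\ell = \tfrac{1}{\ell!}\ex[D^\ell G]$, where $D$ denotes the add-one-point difference operator $D_x G(\eta) = G(\eta \cup \{x\}) - G(\eta)$. The key observation driving everything is that the difference operator, applied to a $U$-statistic, lowers the order by one, so that after $k+1$ applications one gets identically zero — this is what forces the chaos expansion to truncate at order $k$.

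I would first reduce to the case where the kernel $h$ is symmetric, by replacing $h$ with its symmetrization $\bar h(\bs x) = \tfrac{1}{k!}\sum_{\pi} h(x_{\pi(1)}, \ldots, x_{\pi(k)})$, which leaves $G$ unchanged since we sum over unordered $k$-tuples. Next I would establish that $h \in L^2(\mu^k)$ as a consequence of $\ex[G^2] < \infty$: a direct application of the multivariate Mecke formula (Lemma~\ref{Palmhigh} with the trivial intersection pattern $I_{\{1,2\}}=k$, $I_{\{1\}}=I_{\{2\}}=0$) shows that $\ex[G^2]$ contains the term $k!\int h^2 \, d\mu^k$ as one of its nonnegative summands, so square integrability of $h$ follows.

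The heart of the proof is the computation of the iterated difference operator. Applying $D_x$ to $G(\eta) = \sum_{\bs y \in \eta_{\neq}^k} h(\bs y)$ and using symmetry of $h$ gives
\begin{equation*}
D_x G(\eta) = k \sum_{\bs y \in \eta_{\neq}^{k-1}} h(x, \bs y),
\end{equation*}
which is itself a $U$-statistic of order $k-1$ with symmetric kernel $k\,h(x, \cdot)$. Iterating $\ell$ times, I would obtain by induction that $D^\ell_{\bs x_\ell} G$ is a $U$-statistic of order $k-\ell$ with kernel $\tfrac{k!}{(k-\ell)!} h(\bs x_\ell, \cdot)$; in particular $D^k_{\bs x_k} G = k!\, h(\bs x_k)$ is deterministic and $D^{k+1} G \equiv 0$, which immediately yields $g_\ell \equiv 0$ for $\ell > k$.

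Finally, invoking the Stroock representation for the Poisson chaos kernels of a square integrable functional (a standard consequence of Malliavin calculus on Poisson space),
\begin{equation*}
g_\ell(\bs x_\ell) = \frac{1}{\ell!}\, \ex\!\left[D^\ell_{\bs x_\ell} G\right] = \frac{1}{\ell!}\cdot\frac{k!}{(k-\ell)!}\,\ex\!\left[\sum_{\bs y \in \eta_{\neq}^{k-\ell}} h(\bs x_\ell, \bs y)\right],
\end{equation*}
and applying Campbell's theorem for the $(k-\ell)$-th factorial moment measure of the Poisson process (which equals $\mu^{\otimes(k-\ell)}$), the expectation in the bracket collapses to $\int h(\bs x_\ell, \bs y_{k-\ell})\,d\mu^{k-\ell}$. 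Since $\tfrac{1}{\ell!}\cdot\tfrac{k!}{(k-\ell)!} = \binom{k}{\ell}$, this yields the claimed formula. The only delicate point is justifying that $D^\ell G \in L^1$ uniformly so the expectations and iterated difference operators may be exchanged freely; this is handled by the square integrability established in the first step combined with standard $L^2$ bounds for Poisson $U$-statistics (essentially the same Mecke-based computation applied to the reduced kernels), which is the main bookkeeping obstacle rather than a conceptual one.
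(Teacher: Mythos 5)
The paper offers no proof of this proposition at all: it is imported verbatim from Reitzner--Schulte \cite{reitzner2013}, and the proof given there runs along essentially the lines you propose --- the Last--Penrose (Stroock-type) representation $g_\ell(\bs x_\ell)=\frac{1}{\ell!}\ex[D^\ell_{\bs x_\ell}G]$ of the Poisson chaos kernels, the pathwise observation that the add-one-point operator turns an order-$k$ $U$-statistic with symmetric kernel $h$ into an order-$(k-1)$ $U$-statistic with kernel $k\,h(x,\cdot)$, whence $D^k_{\bs x_k}G=k!\,h(\bs x_k)$ is deterministic and $D^{k+1}G\equiv 0$, and the multivariate Mecke/Campbell formula to evaluate $\ex[D^\ell_{\bs x_\ell} G]=\frac{k!}{(k-\ell)!}\int h(\bs x_\ell,\bs y_{k-\ell})\,\der\mu^{k-\ell}$, with $\frac{1}{\ell!}\cdot\frac{k!}{(k-\ell)!}=\binom{k}{\ell}$. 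So your architecture is the correct one and coincides with the cited source.

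One step does need repair, though: your Mecke argument for $h\in L^2(\mu^k)$ is airtight only for nonnegative kernels. Decomposing $\ex[G^2]$ into overlap terms and noting each is nonnegative (the overlap-$j$ term is, up to a positive constant, $\int_{\cQ^j}\bigl(\int_{\cQ^{k-j}}h\,\der\mu^{k-j}\bigr)^2\der\mu^j$) presupposes that the decomposition is valid, which by Fubini--Tonelli requires either $h\ge 0$ or prior knowledge that $\ex\bigl[\bigl(\sum_{\bs y\in\eta^k_{\neq}}|h(\bs y)|\bigr)^2\bigr]<\infty$; for a signed kernel --- and the kernels to which the paper applies this proposition are signed, since they carry the Cram\'er--Wold coefficients $\omega_i$ --- cancellations could a priori leave $\ex[G^2]<\infty$ while the absolute version diverges, and even the inner integrals in the cross terms need not be defined. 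The fix is already inside your own proof: since $D^k_{\bs x_k}G=k!\,h(\bs x_k)$ holds pathwise, the Stroock formula gives $g_k=h$, and the Last--Penrose theorem guarantees that every chaos kernel of a square-integrable Poisson functional lies in $L^2$, so $h\in L^2(\mu^k)$ with no moment computation at all. The same theorem also disposes of the ``delicate point'' you defer at the end: it asserts that for any $G\in L^2$ the expectations $\ex[D^\ell_{\bs x_\ell}G]$ are well defined for $\mu^\ell$-a.e.\ $\bs x_\ell$ and furnish the kernels, so no separate uniform-$L^1$ bookkeeping is required before applying Mecke to the $U$-statistic $D^\ell G$. (One cosmetic slip: $\eta^k_{\neq}$ consists of \emph{ordered} $k$-tuples of distinct points; it is precisely because the sum runs over all orderings that symmetrizing $h$ leaves $G$ unchanged.)
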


\subsubsection{CLT for U-statistics} Before we present the CLT (Corollary~\ref{cltcorol}) that we will use to prove Lemma~\ref{finite}, we define \emph{contractions}, constructs that appear in the quantitative normal approximations of $U$-statistics on Poisson processes.
\begin{definition}[Contractions]\label{contractiondef}
Let $\psi:\cQ^i\to\mathbb{R}$, $\phi:\cQ^j\to\mathbb{R}$ be two symmetric functions (for some $i,j\geq 1$) that are square integrable with respect to $\mu^i$ and $\mu^j$ respectively. For every $0\leq \ell\leq m \leq i\wedge j$,   a contraction of $\psi$ and $\phi$ is the function $\psi \starop_m^\ell \phi:\cQ^{i+j-m-\ell}\to \mathbb{R}$ given by
\begin{align*}
    \psi \starop_m^\ell \phi (\bs x_{i-m}, \bs x'_{j-m},\bs y_{m-\ell}) = \int_{\cQ^\ell} \psi(\bs x_{i-m}, \bs y_{m-\ell}, \bs z_\ell)
    \phi(\bs x'_{j-m}, \bs y_{m-\ell}, \bs z_\ell) \prod_{q=1}^m \mu(\der z_q).
\end{align*}

\end{definition}
The normal approximation result we will use is in terms of Wasserstein distance, which we define below.
\begin{definition}[Wasserstein distance]
The Wasserstein distance between two random variables $X$ and $Y$ is defined as
\begin{align*}
    d_W(X,Y)\coloneqq \sup_{f\in \textrm{Lip}_1} \big|\ex[f(X)]-\ex[f(Y)]\big|,
\end{align*}
where $\textrm{Lip}_1$ denotes the set of Lipschitz functions with Lipschitz constant less than or equal to $1$.
\end{definition}

The following central limit theorem is a combination of two previous results and is succinctly expressed as Theorem 2.4 in \cite{lach2}.

\begin{proposition}[Theorem 3.1 in \cite{taqqu} and Theorem 3.5 in  \cite{lach1}]\label{proplach1} Let $\{G_n\}$ be a collection of random variables with finite Wiener chaos expansions, so that
\begin{align*}
    G_n = \ex[G_n] + \sum_{\ell=1}^k I_\ell(g_\ell) 
\end{align*}
    for some fixed $k>0$, where $g_\ell$ implicitly depends on $n$. Assign $\rho_n^2\coloneqq \var[G_n]$ and assume there exists $\rho^2 >0$ such that
    $\lim_{n\to\infty}\rho_n^2 = \rho^2$.
Let $\mcN$ be a standard Gaussian random variable. For every $n$,
\begin{align*}
    d_W(G_n - \ex[G_n], \rho \mcN)\leq \frac{C}{\rho}\left(\max\|g_i\starop_m^\ell g_j\|_{L^2(\mu^{i+j-m-\ell})} + \max_i \|g_i\|^2_{L^4(\mu^i)} \right) \\+ \frac{\sqrt{2/\pi}}{\max\{\rho_n,\rho \}}|\rho_n^2-\rho| 
\end{align*}
for some constant $C$. Note that, $\|\cdot\|_{L^2(\cdot)}$ and $\|\cdot\|_{L^4(\cdot)}$ here denote the second and fourth moments, with respect to a given measure. 
\end{proposition}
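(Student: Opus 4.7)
The plan is to combine the Malliavin--Stein method on the Poisson space with the product formula for multiple Wiener--It\^{o} integrals. First I would split the bound using the triangle inequality,
$d_W(G_n - \ex[G_n], \rho \mcN) \leq d_W(G_n - \ex[G_n], \rho_n \mcN) + d_W(\rho_n \mcN, \rho \mcN)$,
and dispose of the Gaussian-to-Gaussian piece by the elementary identity $d_W(\rho_n \mcN, \rho \mcN) = |\rho_n - \rho|\,\ex|\mcN| = \sqrt{2/\pi}\,|\rho_n - \rho|$, which after factoring $\rho_n^2 - \rho^2 = (\rho_n - \rho)(\rho_n + \rho)$ is bounded by $\sqrt{2/\pi}/\max\{\rho_n, \rho\}\cdot|\rho_n^2 - \rho^2|$. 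This is exactly the last term in the stated bound.

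For the main piece $d_W(G_n - \ex[G_n], \rho_n \mcN)$ I would invoke the standard Stein--Malliavin inequality on the Poisson space: for any centered, square-integrable Poisson functional $F$ with Malliavin derivative $DF$ and inverse Ornstein--Uhlenbeck generator $L^{-1}$,
$d_W(F, \rho_n \mcN) \leq \tfrac{1}{\rho_n}\ex\bigl|\rho_n^2 - \langle DF, -DL^{-1}F\rangle_{L^2(\mu)}\bigr| \leq \tfrac{1}{\rho_n}\sqrt{\var\bigl(\langle DF, -DL^{-1}F\rangle_{L^2(\mu)}\bigr)}$,
where I have used $\ex[\langle DF, -DL^{-1}F\rangle] = \var(F) = \rho_n^2$ followed by Jensen's inequality. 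Because $F = G_n - \ex[G_n] = \sum_{\ell=1}^k I_\ell(g_\ell)$ has finite chaos order, $D$ and $L^{-1}$ act diagonally: $D_x F = \sum_\ell \ell\, I_{\ell-1}(g_\ell(\cdot, x))$ and $-D_x L^{-1}F = \sum_\ell I_{\ell-1}(g_\ell(\cdot, x))$, so that
$\langle DF, -DL^{-1}F\rangle_{L^2(\mu)} = \sum_{i,j=1}^k i \int I_{i-1}(g_i(\cdot, x))\, I_{j-1}(g_j(\cdot, x))\,\mu(dx)$.

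The heart of the argument is to expand each product $I_{i-1}\cdot I_{j-1}$ via the Poisson product formula for multiple Wiener--It\^{o} integrals. Unlike the Gaussian case, this formula is a double sum which generates precisely the two-parameter contractions $g_i \starop_m^\ell g_j$ of Definition~\ref{contractiondef}, together with diagonal remainder terms reflecting the non-Gaussian nature of the compensated Poisson measure. After integrating against $\mu(dx)$ and applying the Wiener--It\^{o} isometry, $\var(\langle DF, -DL^{-1}F\rangle)$ becomes a finite linear combination of squared norms $\|g_i \starop_m^\ell g_j\|_{L^2(\mu^{i+j-m-\ell})}^2$, producing the contraction term in the bound. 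The diagonal remainder terms, which have no Gaussian analog, are controlled by Cauchy--Schwarz to yield the $\max_i\|g_i\|_{L^4(\mu^i)}^2$ correction. I expect the main obstacle to be precisely the combinatorial bookkeeping in the Poisson product formula: one has to classify every cross term produced by multiplying two multiple integrals, pair the Gaussian-type contractions against the diagonal remainders, and verify that taking variance absorbs all of them into the stated sum of contraction $L^2$-norms plus the single $L^4$ correction, with no residual terms escaping the announced bound.
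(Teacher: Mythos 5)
You should first note that the paper itself gives no proof of this proposition: it is imported verbatim from the literature, combining Theorem 3.1 of \cite{taqqu} (the Malliavin--Stein bound on the Poisson space) with Theorem 3.5 of \cite{lach1} (the contraction bookkeeping), as packaged in Theorem 2.4 of \cite{lach2}. So your sketch is being measured against that literature argument. Parts of it are right: the triangle-inequality split, the identity $d_W(\rho_n\mcN,\rho\mcN)=\sqrt{2/\pi}\,|\rho_n-\rho|$ together with $\rho_n+\rho\geq\max\{\rho_n,\rho\}$ (which also shows the statement's $|\rho_n^2-\rho|$ is a typo for $|\rho_n^2-\rho^2|$), and the chaos computations $D_xF=\sum_\ell \ell\, I_{\ell-1}(g_\ell(\cdot,x))$ and $-D_xL^{-1}F=\sum_\ell I_{\ell-1}(g_\ell(\cdot,x))$ are all correct.

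The genuine gap is in your central inequality. You invoked the Gaussian-space Nourdin--Peccati bound $d_W(F,\rho_n\mcN)\le\rho_n^{-1}\ex\bigl|\rho_n^2-\langle DF,-DL^{-1}F\rangle_{L^2(\mu)}\bigr|$, but on the Poisson space this is not a valid bound on its own: the actual inequality (Theorem 3.1 of \cite{taqqu}) carries a second, intrinsically Poisson remainder of the form $\rho_n^{-2}\int\ex\bigl[(D_xF)^2\,|D_xL^{-1}F|\bigr]\mu(\der x)$, which arises because the add-one-cost derivative satisfies no chain rule, so the Taylor expansion of the Stein solution leaves a term with no Gaussian analog. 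This omission is not cosmetic: in the proof of Theorem 3.5 of \cite{lach1}, a substantial part of the $\max_i\|g_i\|^2_{L^4(\mu^i)}$ term comes precisely from this second summand (observe that $\|g\starop_i^0 g\|_{L^2(\mu^i)}=\|g\|^2_{L^4(\mu^i)}$, so these are the $\ell=0$ diagonal contractions). Your plan instead tries to manufacture the $L^4$ correction entirely from the diagonal terms of the Poisson product formula inside $\var\bigl(\langle DF,-DL^{-1}F\rangle\bigr)$; those diagonals do appear and contribute contraction norms, but they cannot substitute for the dropped Stein term, so as written you would be deriving the stated bound from a false premise. The repair is to start from the two-term inequality of \cite{taqqu}: the variance term then expands, via the product formula and the isometry exactly as you describe, into the family $\|g_i\starop_m^\ell g_j\|_{L^2(\mu^{i+j-m-\ell})}$, while the second term is bounded separately into contraction and $L^4$ quantities --- the ``combinatorial bookkeeping'' you deferred is precisely the half of the argument that also consumes the term you omitted.
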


Next we state an important inequality regarding contraction kernels given as part of Lemma 2.4 in \cite{dobler}.
\begin{lemma}\label{propdobler}
Let $\psi$ and $\phi$ be as in Definition~\ref{contractiondef}. For all $0\leq \ell\leq m \leq i\wedge j$ we have that 
\begin{align*}
    \|\psi\starop_m^\ell \phi\|_{L^2(\mu^{i+j-m-\ell})}\leq \|\psi\|_{L^4(\mu^i)} \|\phi\|_{L^4(\mu^j)}
\end{align*}
\end{lemma}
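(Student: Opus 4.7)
The plan is to iterate the Cauchy--Schwarz inequality three times, following the standard template for contraction-kernel estimates in the Malliavin--Stein literature.

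First I would expand the squared norm as a multiple integral,
\begin{equation*}
\|\psi\starop_m^\ell\phi\|_{L^2(\mu^{i+j-m-\ell})}^2 = \int \left(\int \psi(\bs x,\bs y,\bs z)\phi(\bs x',\bs y,\bs z)\,d\mu^\ell(\bs z)\right)^{\!2} d\mu^{i-m}(\bs x)\,d\mu^{j-m}(\bs x')\,d\mu^{m-\ell}(\bs y),
\end{equation*}
and apply Cauchy--Schwarz to the inner integral against $d\mu^\ell(\bs z)$ to separate $\psi^2$ from $\phi^2$. Since after this step the $\psi^2$-factor depends only on $(\bs x,\bs y,\bs z)$ and the $\phi^2$-factor only on $(\bs x',\bs y,\bs z)$, Fubini factors the $\bs x$- and $\bs x'$-integrals, producing
\begin{equation*}
\|\psi\starop_m^\ell\phi\|_{L^2}^2 \leq \int \Psi(\bs y)\,\Phi(\bs y)\,d\mu^{m-\ell}(\bs y),
\end{equation*}
where $\Psi(\bs y) := \int \psi^2(\bs x,\bs y,\bs z)\,d\mu^{i-m+\ell}(\bs x,\bs z)$ is the squared $L^2$-norm of $\psi(\cdot,\bs y,\cdot)$ over the complementary variables, and $\Phi$ is the analogous quantity for $\phi$. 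A second Cauchy--Schwarz, now in the variable $\bs y$, factors the right-hand side as $\bigl(\int \Psi^2\,d\mu^{m-\ell}\bigr)^{1/2}\bigl(\int \Phi^2\,d\mu^{m-\ell}\bigr)^{1/2}$.

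The remaining step is to show $\int \Psi^2\,d\mu^{m-\ell}\le \|\psi\|_{L^4(\mu^i)}^4$ and symmetrically for $\Phi$. The approach is to duplicate the integrated variables into $(\bs x_1,\bs z_1)$ and $(\bs x_2,\bs z_2)$ and write
\begin{equation*}
\int \Psi^2\,d\mu^{m-\ell} = \int \psi^2(\bs x_1,\bs y,\bs z_1)\,\psi^2(\bs x_2,\bs y,\bs z_2)\,d\mu^{i-m+\ell}(\bs x_1,\bs z_1)\,d\mu^{i-m+\ell}(\bs x_2,\bs z_2)\,d\mu^{m-\ell}(\bs y),
\end{equation*}
then apply the pointwise inequality $2ab\le a^2+b^2$ with $a=\psi^2(\bs x_1,\bs y,\bs z_1)$ and $b=\psi^2(\bs x_2,\bs y,\bs z_2)$ and use the symmetry of the two copies to collapse both terms into $\int \psi^4\,d\mu^i=\|\psi\|_{L^4(\mu^i)}^4$. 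Assembling the three Cauchy--Schwarz applications then yields the claim.

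I expect this last bookkeeping step to be the main obstacle, since the product-measure structure must be arranged so that the duplicated copy of $(\bs x,\bs z)$ recombines with the ``free'' variable $\bs y$ back into the full $i$-fold $L^4$-integral without picking up spurious factors of the total mass of $\mu$. This matching of indices is the non-trivial technical ingredient of D\"obler's original argument; once it is verified, the chain of inequalities assembles immediately.
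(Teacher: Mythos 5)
The paper never proves this lemma: it is quoted directly as ``part of Lemma 2.4 in \cite{dobler}'', so there is no internal proof to compare against, and your argument is effectively a reconstruction of D\"obler's original one. Your first two steps are exactly right: Cauchy--Schwarz in $\bs z$ followed by factorization of the $\bs x$- and $\bs x'$-integrals gives $\|\psi\starop_m^\ell\phi\|_{L^2}^2\le\int\Psi\Phi\,d\mu^{m-\ell}$, and a second Cauchy--Schwarz in $\bs y$ reduces everything to the claim $\int\Psi^2\,d\mu^{m-\ell}\le\|\psi\|_{L^4(\mu^i)}^4$.

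That last claim, which you defer as ``bookkeeping'', is precisely where the argument as written fails, and the failure is not a matter of arranging indices. After $2ab\le a^2+b^2$, the term $\psi^4(\bs x_1,\bs y,\bs z_1)$ no longer involves $(\bs x_2,\bs z_2)$, so integrating out that block contributes the factor $\mu(\cQ)^{i-m+\ell}$, not $1$, and the symmetry of the two copies cannot remove it: you get $\int\Psi^2\,d\mu^{m-\ell}\le\mu(\cQ)^{i-m+\ell}\|\psi\|_{L^4(\mu^i)}^4$. No cleverer rearrangement can do better, because the stated inequality is genuinely false for measures of total mass greater than $1$: take $\mu$ to be counting measure on $N$ points, $i=j=2$, $m=\ell=1$, $\psi=\phi\equiv 1$; then $\psi\starop_1^1\phi\equiv N$, so $\|\psi\starop_1^1\phi\|_{L^2(\mu^2)}^2=N^4$, while $\|\psi\|_{L^4(\mu^2)}^2\,\|\phi\|_{L^4(\mu^2)}^2=N^2$. (Replacing $\mu$ by $c\mu$ shows the two sides scale with different powers of $c$ except in degenerate index cases, so the lemma can only hold with a normalization.) The statement thus tacitly requires $\mu$ to be a probability measure, as it is in D\"obler's setting and in the paper's Lemma~\ref{Palmhigh} --- a hypothesis you never invoke, and the genuinely missing ingredient. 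Once $\mu(\cQ)=1$ is assumed, the factor you were worried about is exactly $1$ and your duplication argument closes as written; more simply, the duplication is unnecessary, since Jensen (Cauchy--Schwarz against the constant $1$) gives $\Psi(\bs y)^2=\bigl(\int\psi^2\,d\mu^{i-m+\ell}\bigr)^2\le\int\psi^4\,d\mu^{i-m+\ell}$, and integrating in $\bs y$ yields $\|\psi\|_{L^4(\mu^i)}^4$. With that hypothesis made explicit, your three-step Cauchy--Schwarz chain is a complete and correct proof of the cited result.
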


Combining Proposition~\ref{proplach1} and Lemma~\ref{propdobler}, we obtain the following corollary for normalized random variables with a  finite chaos expansion. \rev{Normal approximation in this form is particularly suitable for our setting due to its computational form (cf. Theorem 4.6 of \cite{reitzner2013}).}

\begin{corollary}\label{cltcorol}
Let $G_n$ be as in Proposition~\ref{proplach1} and assume $\var[G_n]\to \rho^2>0$. Then 
\begin{align}
\label{416:equn}
    d_W\left(G_n-\ex[G_n], \rho \mcN\right)\lesssim \max_{1\leq i\leq k} \|g_i\|^2_{L^4(\mu^i)}.
\end{align}

\end{corollary}

Now that we have given the necessary background, we are ready to present the proofs of finite dimensional convergence of $\bar f_n(t)$ in different regimes.

\subsubsection{Proofs}\label{proofssec}
We start with the finite-dimensional distributions in the slow and the moderate regimes.
\begin{proof}[Proof of Lemma~\ref{finite}]
The proof follows from calculating the right hand side of \eqref{416:equn} in Corollary~\ref{cltcorol} for the normalized $U$-statistic 
\begin{align}\label{barGndef}
    \bar G_n\coloneqq \frac{\sum_{i=1}^m \omega_i \widehat f_n(t_i) }{\sqrt{\var[\widehat f_n(0)] }} .
\end{align}
The kernel associated with $\bar G_n$ is
\begin{align*}
    \bar h(\wh{\bs x}) = \frac{1}{k!\cdot \sqrt{\var[\widehat f_n(0)] }} \sum_{i=1}^m \omega_i \wh \xi_r(\wh{\bs x} (t_i))  \tau_{t_i}(\wh{\bs x}).
\end{align*}
Using Proposition~\ref{UstatWiener}, the Wiener kernel of degree $\ell$ of interest to us can be written as a function of $n$
\begin{align*}
    \bar g_\ell(\widehat{\bs x}_\ell,n) =  \frac{\binom{k}{\ell}}{k!\cdot \sqrt{\var[\widehat f_n(0)]}} \int_{\cQ^{k-\ell}} \sum_{i=1}^m \omega_i \wh \xi_r(\wh{\bs x}_\ell (t_i), \wh{\bs y}_{k-\ell} (t_i))  \tau_{t_i}(\wh{\bs x}_\ell,\wh{\bs y}_{k-\ell} )\der (\mu_{n,T})^{k-\ell},
\end{align*}
where $\cQ$ now is a shorthand notation for $Q\times\mathbb R^+ \times \mathbb R^+ \times C_{\Rd}[0,T]$ and $\mu_{n,T}$ is the product intensity measure of the marked process $\widehat \eta_{n,T}$. 
Next we will find an asymptotic bound on the square of the fourth moment of $\bar g_\ell$ under $(\mu_{n,T})^{\ell}$ in order to use Corollary~\ref{cltcorol}. First, note that 
\begin{align}
&\int_{\cQ^{\ell}} \left[\int_{\cQ^{k-\ell}} \sum_{i=1}^m \omega_i \wh \xi_r(\wh{\bs x}_\ell (t_i), \wh{\bs y}_{k-\ell} (t_i))  \tau_{t_i}(\wh{\bs x}_\ell,\wh{\bs y}_{k-\ell} ) \der (\mu_{n,T})^{k-\ell}\right]^4 \der (\mu_{n,T})^{\ell}  \nonumber \\
&\leq\max_j|\omega_j|^4 \cdot \int_{\cQ^{\ell}} \left[\int_{\cQ^{k-\ell}} \sum_{i=1}^m \wh \xi_r(\wh{\bs x}_\ell (t_i), \wh{\bs y}_{k-\ell} (t_i))  \der (\mu_{n,T})^{k-\ell}\right]^4 \der (\mu_{n,T})^{\ell} \nonumber
\\
&\begin{alignedat}{2}\label{asympbd}
&\lesssim \int_{\cQ^{\ell}}  \Bigg[ \sum_{i=1}^m \int_{\cQ^{k-\ell}} &&\ind\left\{ \diam(\pi (\wh{\bs x}_\ell(t_i))) \leq \delta r \right\} \\
 &&&  \times  \ind\left\{\max_{1\leq j\leq k-\ell} \left\|\pi (\wh{x}_1(t_i)) - \pi(\wh{y}_j (t_i))\right\| \leq \delta r \right\} \der (\mu_{n,T})^{k-\ell} \Bigg]^4 \der (\mu_{n,T})^{\ell} .
\end{alignedat}
\end{align}
 Now note that for any given $\wh x_1$ and $t_i$,
\begin{align}
 \int_{\cQ^{k-\ell}} \ind\left\{\max_{1\leq j\leq k-\ell} \left\|\pi (\wh{x}_1(t_i)) - \pi(\wh{y}_j (t_i))\right\| \leq \delta r \right\} \der (\mu_{n,T})^{k-\ell} = C (nr^d)^{k-\ell} \label{intindkl}
\end{align}
for some constant $C$, due to the spatial homogeneity (Remark~\ref{stationspacexihat}). Therefore,  
\eqref{asympbd} can be asymptotically  bounded above by
\begin{align*}
\begin{split}
    \int_{\cQ^{\ell}} \Bigg[ (nr^d)^{k-\ell}\sum_{i=1}^m& \ind\left\{ \diam(\pi (\wh{\bs x}_\ell(t_i))) \leq \delta r \right\} \Bigg]^4 \der (\mu_{n,T})^{\ell} \\ &\lesssim m^4 (nr^d)^{4(k-\ell)}  \int_{\cQ^{\ell}} \ind\left\{ \diam(\pi (\wh{\bs x}_\ell(t_1))) \leq \delta r \right\} \der (\mu_{n,T})^{\ell},
    \end{split}
\end{align*}
using the spatial homogeneity again. Using the same techniques as in the calculation of the first moment of $\wh f_n(t)$, the integral on the right hand side can,  asymptotically, be  bounded above by $n(nr^d)^{\ell-1}$, which leads us to conclude that  
\begin{align}\label{maxg4bd}
    \|\bar g_\ell\|^2_{L^4(\mu^\ell)} \lesssim \frac{n^{1/2} (nr^d)^{\frac{4k-3\ell-1}{2}}}{\var[\widehat f_n(0)]}.
\end{align}
On the other hand, we observe that
\begin{align}\label{limvarGn}
    \lim_{n\to\infty}\var[\bar G_n] = \lim_{n\to\infty}\var\left[\sum_{i=1}^m  \frac{\omega_i \widehat f_n(t_i)}{\sqrt{\var[\widehat f_n(0)]}}\right]=  \sum_{i,\ell=1}^m \omega_i \omega_\ell  \lim_{n\to\infty} \cov\left[\bar{f}_n(t_i), \bar{f}_n(t_\ell)\right].
\end{align}
In the slow regime, $\sigma \ll r$, using Lemma~\ref{covOUlemma}, we obtain
\begin{align}\label{varG}
    \lim_{n\to\infty} \var[\bar G_n] =  \sum_{i,\ell=1}^m \omega_i \omega_\ell\sum_{j=1}^k \lambda_j \exp(-j|t_i-t_\ell|) =  \sum_{j=1}^k \lambda_j\bs{\omega}^\top \mathrm{T}^{(j)} \bs{\omega},
\end{align}
for a set of non-negative constants $\lambda_1,\ldots,\lambda_\ell$, with the entries of the matrix $\mathrm{T}^{(j)}$ defined as 
 \[\mathrm{T}^{(j)}_{i\ell} = \exp(-j|t_i-t_\ell|). \]
As this is the covariance matrix of an Ornstein-Uhlenbeck process, $\mathrm{T}^{(j)}$ is a positive definite matrix. Therefore $\bs{\omega}^\top \mathrm{T}^{(j)} \bs{\omega} >0$, for all nonzero $\bs{\omega}\in \mathbb R^m$, and $\rho^2 = \lim_{n\to\infty}\var[\bar G_n]$ is a positive constant for all $\bs{\omega}$. Furthermore, \eqref{varfntasymp} and \eqref{alphaasymp} give that    
\[r^{-d}\sum_{j=1}^k (nr^d)^{2k-j} \lesssim \var[\wh f_n(0)].\]
Using this, together  with Corollary~\ref{cltcorol} and \eqref{maxg4bd},  leads to 
\begin{align}
  d_W\left(\bar G_n - \ex[\bar G_n],\rho \mc N \right)\lesssim \frac{n^{1/2}r^d \max_{1\leq\ell\leq k}(nr^d)^{\frac{4k-3\ell-1}{2}}}{\sum_{j=1}^k (nr^d)^{2k-j}}, \label{dwassbound}
\end{align}
with $\bar G_n$ as defined in \eqref{barGndef}.
If $ nr^d\to 0$, \rev{the $\max$ in the numerator in \eqref{dwassbound} is achieved for $\ell = k$ and the dominant term in the denominator is the one with $j=k$, which gives} 
\begin{align}\label{dwgnbar}
  d_W\left(\bar G_n- \ex[\bar G_n],\rho\mc N \right)\lesssim \frac{n^{1/2}r^d (nr^d)^{\frac{k-1}{2}}}{(nr^d)^k} = \frac{1}{\sqrt{n^k r^{d(k-1)}}} \ll 1,
\end{align}
where we used Assumption~\ref{asymptoassum}. On the other hand, if $ nr^d\to \gamma\in (0,\infty]$, \rev{the $\max$ in the numerator is achieved for $\ell = 1$ and the dominant term in the denominator is the one with $j=1$, which gives}
\begin{align*}
  d_W\left(\bar G_n- \ex[\bar G_n],\rho \mc N \right)\lesssim \frac{n^{1/2}r^d (nr^d)^{2k-2}}{(nr^d)^{2k-1}} = n^{-1/2}.  
\end{align*}

Therefore, the proof of Lemma~\ref{finite} follows in the slow regime. \rev{In the moderate regime, to prove that $\lim_{n\to\infty}\var[\bar G_n]$ is a positive constant, we use Lemma~\ref{covModlemma}, \eqref{definezeta} and \eqref{limvarGn} to write}
\begin{align}\label{varGnorm}
    \lim_{n\to\infty} \var[\bar G_n]  =  \lambda_1\bs{\omega}^\top \mathrm T^{(1)} \bs{\omega}+ \sum_{j=2}^k \frac{\lambda_j}{\kappa_j}\bs{\omega}^\top \left[\tilde{\mathrm T}^{(j)} \circ \mathrm T^{(j)} \right] \bs{\omega},
\end{align}
for some non-negative constants $\tilde\lambda_2,\ldots, \tilde\lambda_k$, and matrices $\tilde{\mathrm T}^{(j)}$. From \eqref{zetajprob}, we have
\begin{align*}
    \mathrm{\tilde T}^{(j)}_{i\ell} = \ex\left[\wh\xi_1(0, \bs z, \bs y) \wh\xi_1(0, (\bs z + \bs w_{\beta|t_i-t_\ell|}) , \bs y')  \right],
\end{align*}
where $\bs w_x$ is a set of $(j-1)$ jointly Gaussian distributed vectors in $\Rd$ with the inverse covariance matrix $\mathrm M^{j,x}$ as  given in \eqref{matrixMdef}, and $\bs z, \bs y, \bs y'$ are iid points in $Q\subset \Rd$ as before. In addition, $\circ$ in \eqref{varGnorm} denotes the Hadamard (entry-wise) product of two matrices. 
\rev{We remark here that, as in the covariance calculations, in the $\beta\to 0^+$ limit, \eqref{varGnorm} reduces to that of the slow regime, as one would expect.} 

Note, also, that $\tilde{\mathrm  T}^{(j)}$ can also be considered as the correlation matrix of the process 
\[\wh\xi_1\left(0, (\bs z + \bs W(\beta t)), \bs y\right), \]
sampled at time points $t_1,\ldots ,t_m$,  where $\{\bs W(\beta t)): t\geq 0\}$ is a stationary Gaussian process in $\mathbb{R}^{d(j-1)}$ with non-degenerate covariance satisfying $\bs W(0)=0$. Consider 
\begin{align*}
    \bs{\omega}^\top \tilde{\mathrm  T}^{(j)}\bs{\omega}^\top =\ex\left[\left(\sum_{\ell=1}^m \omega_\ell  \wh\xi_1\left(0, (\bs z + \bs W(\beta t_\ell)), \bs y\right) \right)^2\right]
\end{align*}
for a nonzero $\bs{\omega}\in \mathbb{R}^m$, and consider the probability
\begin{equation*}
\begin{split}
    &\pr\left[\sum_{\ell=1}^m \omega_\ell  \wh\xi_1\left(0, (\bs z + \bs W(\beta t_\ell)), \bs y\right) \neq 0\right] \\ 
    &\hspace{5em}\geq \pr\Biggl[\wh \xi_1\left(0, \bs z, \bs y\right)>0,\;\; \bigcap_{\ell=2}^m \left\{\wh\xi_1\left(0, (\bs z + \bs W(\beta (t_\ell-t_1))), \bs y\right)=0\right\} \Biggl].
    \end{split}
\end{equation*}
The right hand side is positive due to Remark~\ref{feasxihat} and the fact that $\{\bs W(\beta t)): t\geq 0\}$ has non-degenerate covariance. This leads to the conclusion that $\bs{\omega}^\top \tilde{\mathrm T}^{(j)}\bs{\omega}^\top>0$ for all $\bs{\omega}$, all entries of which are nonzero, and therefore $\tilde{\mathrm T}^{(j)}$ is a positive definite matrix for all $2\leq j\leq k$. 
The product $\tilde{\mathrm T}^{(j)} \circ \mathrm T^{(j)}$ is positive definite as a result of the Schur product theorem \cite[Theorem 7.5.3.]{schur}. Consequently,  \eqref{varGnorm} is strictly positive, and the proof of Lemma~\ref{finite} follows.
\end{proof}

For the fast regime we prove the following lemma.
\begin{lemma}\label{finitefast}
Under the assumptions of Theorem~\ref{white noise thm}, the finite dimensional distributions of the process $\{\int_0^t\tilde{f}_{n}(x)\der x:t\geq 0\}$ converge to multivariate Gaussian.
\end{lemma}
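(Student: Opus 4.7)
The approach will parallel that of Lemma~\ref{finite}: I will re-express the linear combination of time-integrals of $\tilde f_n$ as a single $U$-statistic on the marked Poisson process $\wh\eta_{n,T}$, verify that its variance has the correct positive-definite limit via Lemma~\ref{covNoiselemma}, and finally apply Corollary~\ref{cltcorol} after bounding the $L^4$ norms of the associated Wiener chaos kernels. By the Cramer--Wold device, it suffices to show that, for any $m\ge 1$, scalars $\omega_1,\ldots,\omega_m$, and times $0\le t_1<\cdots<t_m$, the variable
\[
G_n \;:=\; \sum_{i=1}^m \omega_i\int_0^{t_i}\tilde f_n(s)\,\der s
\]
is asymptotically Gaussian. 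A Fubini exchange (justified as in \eqref{exfntbound}) rewrites $G_n$ as $(2M_n\var[\wh f_n(0)])^{-1/2}\int_0^{t_m} g(s)\bigl(\wh f_n(s)-\ex[\wh f_n(s)]\bigr)\,\der s$, where $g(s):=\sum_{i:\,t_i\ge s}\omega_i$ is a bounded step function; inserting the definition \eqref{fnhatdef} and swapping the a.s.\ finite sum over $k$-subsets with the $s$-integral presents $G_n$ (modulo a deterministic constant) as a $U$-statistic on $\wh\eta_{n,T}$ with symmetric kernel
\[
\bar h(\wh{\bs x}) \;=\; \frac{1}{k!\,\sqrt{2M_n\var[\wh f_n(0)]}}\int_0^{t_m} g(s)\,\wh\xi_r(\wh{\bs x}(s))\,\tau_s(\wh{\bs x})\,\der s.
\]
Proposition~\ref{UstatWiener} will then supply a finite Wiener chaos expansion of order at most $k$, with explicit Wiener kernels $\bar g_\ell$, $1\le\ell\le k$.

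The variance limit will follow immediately by bilinearity from Lemma~\ref{covNoiselemma}:
\[
\lim_{n\to\infty}\var[G_n] \;=\; \sum_{i,j=1}^m \omega_i\omega_j\,(t_i\wedge t_j),
\]
which is the variance of $\sum_i\omega_i B(t_i)$ and is therefore strictly positive for every nonzero $\bs\omega$, since the Gram matrix of a standard Brownian motion is positive definite.

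For the $L^4$ norms of the Wiener kernels I plan to push the fourth power inside the $s$-integral via the power-mean inequality, then apply Fubini so that, at each fixed $s$, the spatial-homogeneity/localization calculation carried out in \eqref{asympbd}--\eqref{intindkl} in the proof of Lemma~\ref{finite} (which is insensitive to the specific time sample $s$, thanks to Remark~\ref{stationspacexihat}) yields the bound $\|\bar g_\ell\|_{L^4}^4\lesssim (M_n\var[\wh f_n(0)])^{-2}\, n^{4k-3\ell}r^{d(4k-3\ell-1)}$, uniformly in $s\in[0,t_m]$. Combining this with $\var[\wh f_n(0)]\asymp n^k r^{d(k-1)}$ (the $j=k$ term of \eqref{varfntasymp} dominates because $nr^d\to 0$) and the lower bound $M_n\gtrsim (r/\sigma)^{2+\epsilon}$ from Proposition~\ref{propintasymp}, a quick check shows that the binding index is $\ell=k$, yielding
\[
\|\bar g_k\|_{L^4}^2 \;\lesssim\; \bigl(n^k r^{d(k-1)}\bigr)^{-1/2}(\sigma/r)^{2+\epsilon},
\]
which tends to $0$ under the hypothesis $\sigma/r\ll (n^k r^{d(k-1)})^{1/4-\epsilon}$ of Theorem~\ref{white noise thm}, provided the $\epsilon$ in Proposition~\ref{propintasymp} is chosen sufficiently small. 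Corollary~\ref{cltcorol} will then conclude the proof.

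The main obstacle will be this final $L^4$ kernel estimate. The extra $s$-integration weakens the effective spatial localization relative to the pointwise-in-time case of Lemma~\ref{finite}, and it is precisely the interplay between the $(r/\sigma)^{2+\epsilon}$ lower bound on $M_n$ supplied by Proposition~\ref{propintasymp} and the quantitative cap on $\sigma/r$ built into Theorem~\ref{white noise thm} that rescues the contraction estimate. The intermediate indices $1\le\ell<k$ are more benign because the exponent $(2k-3\ell)/2$ on $nr^d$ is less negative (and in some cases positive), so in the regime $nr^d\to 0$ they contribute no more than the $\ell=k$ term; verifying this bookkeeping, together with the routine justification of the Fubini exchanges via the boundedness of $\xi_r$ and the a.s.\ finiteness of $\wh\eta_{n,T}$, constitutes the remaining technical content.
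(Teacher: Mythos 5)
Your proposal is correct and takes essentially the same route as the paper's proof: the same $U$-statistic kernel (your step-function rewriting via $g(s)=\sum_{i:\,t_i\ge s}\omega_i$ is just a repackaging of $\sum_i\omega_i\int_0^{t_i}$), the same Jensen/power-mean argument for the $L^4$ kernel bound $\|\tilde g_\ell\|^2_{L^4}\lesssim n^{1/2}(nr^d)^{(4k-3\ell-1)/2}/(M_n\var[\widehat f_n(0)])$, the same identification of $\ell=k$ as the binding index when $nr^d\to 0$, and the same variance limit via Lemma~\ref{covNoiselemma} before invoking Corollary~\ref{cltcorol}. Your closing step, combining the lower bound $M_n\gtrsim(r/\sigma)^{2+\epsilon}$ from Proposition~\ref{propintasymp} with the hypothesis $\sigma/r\ll(n^kr^{d(k-1)})^{1/4-\epsilon'}$ and choosing $\epsilon$ small, is exactly how the paper concludes.
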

\begin{proof}
\rev{As in the proof of Lemma~\ref{finite} above,
we calculate the right hand side of \eqref{416:equn} in Corollary~\ref{cltcorol}.} Define
\begin{align*}
    \tilde G_n\coloneqq \frac{\sum_{i=1}^m \omega_i \int_0^{t_i}\widehat {f}_{n}(u)\der u }{\sqrt{2\var[\widehat f_n(0)] M_n }} .
\end{align*}
The $U$-statistic kernel associated with $\tilde G_n$ is then
\begin{align*}
    \tilde h(\wh{\bs x}) = \frac{1}{k!\cdot \sqrt{2M_n\var[\widehat f_n(0)] }} \sum_{i=1}^m \omega_i \int_0^{t_i}\wh \xi_r(\wh{\bs x} (u))  \tau_{u} (\wh{\bs x}) \der u,
\end{align*}
and the Wiener kernel of degree $\ell$ is 
\begin{align*}
    \tilde g_\ell(\widehat{\bs x}_\ell,n) =  \frac{\binom{k}{\ell}}{k!\cdot \sqrt{2M_n\var[\widehat f_n(0)]}} \int_{\cQ^{k-\ell}} \sum_{i=1}^m \omega_i \int_0^{t_i} &\wh \xi_r(\wh{\bs x}_\ell (u), \wh{\bs y}_{k-\ell} (u))  \\&\times\tau_{u}(\wh{\bs x}_\ell,\wh{\bs y}_{k-\ell} )\der u \; \der (\mu_{n,T})^{k-\ell}.
\end{align*}
Accordingly,
\begin{align}
&\int_{\cQ^{\ell}} \left[\int_{\cQ^{k-\ell}} \sum_{i=1}^m \omega_i \int_{0}^{t_i}\wh \xi_r(\wh{\bs x}_\ell (u), \wh{\bs y}_{k-\ell} (u))  \tau_{u}(\wh{\bs x}_\ell,\wh{\bs y}_{k-\ell} ) \der u \cdot\der (\mu_{n,T})^{k-\ell}\right]^4 \der (\mu_{n,T})^{\ell}  \nonumber\\
&\lesssim \int_{\cQ^{\ell}} \left[\int_{\cQ^{k-\ell}} \int_0^T \wh \xi_r(\wh{\bs x}_\ell (u), \wh{\bs y}_{k-\ell} (u)) \der u \cdot \der (\mu_{n,T})^{k-\ell}\right]^4 \der (\mu_{n,T})^{\ell} \nonumber\\
&\begin{alignedat}{2} \nonumber 
&\lesssim \int_{\cQ^{\ell}}   \int_0^T \Bigg[ \int_{\cQ^{k-\ell}} &&\ind\left\{ \diam(\pi (\wh{\bs x}_\ell(u))) \leq \delta r \right\} \\
   &&&\hspace{-1em}\times \ind\left\{\max_{1\leq j\leq k-\ell} \left\|\pi (\wh{x}_1(u)) - \pi(\wh{y}_j (u))\right\| \leq \delta r \right\} \der (\mu_{n,T})^{k-\ell} \Bigg]^4 \der u \cdot \der (\mu_{n,T})^{\ell}, 
\end{alignedat}
\end{align}
using Jensen's inequality for the final inequality.
Using \eqref{intindkl} for $t_i=u$ we obtain that 
the above 
is  bounded  by
\begin{align*}
    C (nr^d)^{4(k-\ell)}  \int_{\cQ^{\ell}} \int_0^T \ind\left\{ \diam(\pi (\wh{\bs x}_\ell(u))) \leq \delta r \right\} \der u\cdot \der (\mu_{n,T})^{\ell}
\end{align*}
for some constant $C$. Therefore, arguing as in the paragraph in the proof of Lemma~\ref{finite} leading to \eqref{maxg4bd}, we find
\begin{align*}
    \|\tilde g_\ell\|^2_{L^4(\mu^\ell)} \lesssim \frac{n^{1/2} (nr^d)^{\frac{4k-3\ell-1}{2}}}{M_n \var[\widehat f_n(0)]}.
\end{align*}
Thus, similar to \eqref{dwgnbar}, since $ nr^d\to 0$
\begin{align*}
  d_W\left(\tilde G_n- \ex[\tilde G_n],\tilde\rho\mc N \right)\lesssim  \frac{1}{M_n\sqrt{n^k r^{d(k-1)}}}\ll \frac{\sigma^{2+\epsilon}}{r^{2+\epsilon}\sqrt{n^k r^{d(k-1)}}}  
\end{align*}
for any $\epsilon>0$, due to Proposition~\ref{propintasymp}. The convergence follows from the assumption of Theorem~\ref{white noise thm} that $\sigma /r\ll (n^kr^{d(k-1)})^{\frac{1}{4}-\epsilon'}$ for some $\epsilon'$.
Also,
\begin{align*}
    \tilde\rho^2 = \lim_{n\to\infty}\var[\tilde G_n] &= \lim_{n\to\infty}\var\left[\sum_{i=1}^m \sum_{\ell=1}^m \omega_i  \int_0^{t_i}\tilde f_n(u)\der u\right] = \sum_{i=1}^m  \omega_i \omega_\ell (t_i\wedge t_\ell) >0,
\end{align*}
due to Lemma~\ref{covNoiselemma}. This concludes the proof.

\end{proof}

\begin{proof}[Proof of Theorem \ref{slow thm}] 
Follows from Lemma~\ref{covOUlemma} and Lemma~\ref{finite}.
\end{proof}

\begin{proof}[Proof of Theorem \ref{ModerateThm}]
Follows from Lemma~\ref{covModlemma} and  Lemma~\ref{finite}.
\end{proof}

\begin{proof}[Proof of Theorem \ref{white noise thm}]
Follows from Lemma~\ref{covNoiselemma} and  Lemma~\ref{finitefast}.
\end{proof}

\bibliography{references}      

\end{document}